     \def\section{\@startsection{section}{1}%
     \z@{.7\linespacing\@plus\linespacing}{.5\linespacing}%
     {\bfseries
     \centering
     }}
     \def\@secnumfont{\bfseries}
   \newtheorem{theorem}{Theorem}[section]
\newtheorem{lemma}[theorem]{Lemma}
\newtheorem{corollary}[theorem]{Corollary}
\theoremstyle{definition}
\newtheorem{remark}[theorem]{Remark}
\newtheorem{remarks}[theorem]{Remarks}
\numberwithin{equation}{section}
\def \a{{\alpha}}
\def \b{{\beta}}
\def \D{{\Delta}}
\def \d{{\delta}}
\def \e{{\varepsilon}}
\def \g{{\gamma}}
\def \l{{\lambda}}
\def \o{{\omega}}
\def \O{{\Omega}}
\def \p{{\varphi}}
\def \t{{\vartheta}}
\def \m{{\mu}}
\def \s{{\sigma}}
\def \A{{\mathcal A}}
\def \B{{\mathcal B}}
\def \E{{\bf E}\, }
\def \N{{\bf N}}
\def \P{{\bf P}}
\def \Q{{\bf C}}
\def \qq{{\qquad}}
 \def \R{{\bf R}}
\def \T{{\bf T}}
\def \Z{{\bf Z}}
\def \dd{{\rm d}}
\def \noi{{\noindent}}
\def\E{{\mathbb E \,}}
\def \T{{\mathbb T}}
\def\P{{\mathbb P}}
\def\R{{\mathbb R}}
\def\Z{{\mathbb Z}}
\def\Q{{\mathbb Q}}
\def\N{{\mathbb N}}
   \font\sevenrm= cmr10 at 7 pt
\def\ddate {\sevenrm \ifcase\month\or January\or
February\or March\or April\or May\or June\or July\or
August\or September\or October\or November\or December\fi\! {\the\day}, \!{\sevenrm\the\year}}
  \title[Divergence Criteria in Ergodic Theory]{\ Criteria of Divergence Almost Everywhere in Ergodic Theory}
\begin{document}
    \author{Michel J.\,G.  WEBER}
  \address{IRMA, Universit\'e
Louis-Pasteur et C.N.R.S.,   7  rue Ren\'e Descartes, 67084
Strasbourg Cedex, France.
   E-mail:    {\tt  michel.weber@math.unistra.fr}}
\footnote{\emph{Key words and phrases}: Bourgain's entropy criteria, Stein's Continuity principle,         Gaussian process, stable process,   metric entropy, GB set, GC set, Kakutani-Rochlin Lemma.  \par 
  \ddate{}}

\maketitle$$ \frac{}{\hskip 420 pt} $$
{\bf Abstract.}
 In  this expository paper, 
we survey nowadays classical tools or criteria used in problems of convergence everywhere to build counterexamples: the Stein continuity principle, Bourgain's entropy criteria and Kakutani-Rochlin
 lemma, most  classical device  for these questions in ergodic theory. First, we state a $L^1$-version of the  continuity principle and give an example of its usefulness by applying it to some  famous problem on divergence almost everywhere of Fourier series.   Next we particularly  focus on    entropy criteria in $L^p$, $2\le p\le \infty$  and provide  detailed proofs. We also study the   link between the associated maximal operators  and the canonical  Gaussian process on $L^2$.  We further study the corresponding criterion   in $L^p$, $1<p<2$  using properties of $p$-stable processes. Finally we consider  Kakutani-Rochlin's
 lemma, one of the  most frequently used tool in ergodic theory, by   
     stating and proving a criterion for  a.e. divergence of weighted ergodic averages.

$$ \frac{}{\hskip 420 pt} $$  This is the extended version of a paper that  appeared in  "Zapiski Nauchnyh Seminarov POMI", 
 vol. 441,  
2015, ser. "Probability and Statistics 22", p.73--116.
   
\section{Introduction.}\label{s1}
 This is an expository paper on criteria of divergence almost everywhere in ergodic theory, and mainly Bourgain's entropy criteria in $L^p$, $2\le p\le \infty$. The paper is 
written in a self-contained  and  informative way: tools needed are presented, with (expected to be) helpful and sometimes historical comments, auxiliary results are included, as well as detailed and careful  proofs of main theorems. The preparation of this paper is thus   made in order to be also an efficient tool  for investigating these questions. This is in fact our main objective.
We do not study nor present applications. We refer for these to Bourgain \cite{B1}, \cite{B}, \cite{B2}. We also refer to   Rosenblatt and   Wierdl monograph \cite {RW}, to our monograph \cite{We1}   devoted to the study of these criteria and to Chapters 5 and 6 of our book  \cite{We} where applications of Stein continuity principle are also studied. We further refer to Lacey \cite{La}, Lesigne \cite{Le}, Berkes  and Weber \cite{BW} notably for other applications.     In writing the present paper, we   refered to  Chapter 6 of   \cite{We}. We were able to improve and simplify some  proofs and also complete it by new results. The entropy criterion in $L^p$, $1<p<2$ obtained in Weber \cite{We6} is stated and proved under a less restrictive commutation assumption, and we included the necessary material   from the theory of $\a$-stable processes (here $\a=p$) for the proof.  The metric entropy method     (first introduced by Strassen in the theory of Gaussian processes, see \cite{Du2}) is briefly and concisely presented for the need of the study. 

The paper is organized as follows. In Section \ref{s2}, we start with what is certainly, by the probabilistic argument used in its proof, the   basis of everything:
 the Stein continuity principle, the idea of which lies in Kolmogorov's seminal work  on harmonic conjugate functions and Fourier series \cite{Ko2}. A slightly forgotten aspect of this principle is that it is also a tool for producing counterexamples to almost everywhere questions. That point is developed in this Section. Next,   Section \ref{s3} is the central part of the paper and concerns Bourgain's entropy criteria and extensions of them. In Section \ref{s4}, we present auxiliary results concerning $L^p$-isometries, stable random variables and processes, variants of Banach principle, a metric comparison lemma and   basic Gaussian tools.  Section \ref{s5} is completely devoted to proofs of the results stated in Section \ref{s3}. We conclude the paper with  Kakutani-Rochlin
 lemma, one of the  most  classical devices   in ergodic theory. There are many applications of this result, also called Kakutani-Rochlin towers'
 lemma. We refer to Rosenblatt and   Wierdl monograph \cite {RW}. We illustrate it by stating and proving a criterion for  a.e. divergence of weighted ergodic averages,  based on  Deniel's construction \cite{D}.


\section{The Continuity Principle.}\label{s2}
Let $(X,{\A}, \mu)$ be a probability space  with a  $\mu$-complete $\s$-field $\A$.  
     Recall  some basic facts, and to begin,   recall that  the topology of   convergence in measure  on  
$L^0(\mu)$
 ($g_n\buildrel{\m}\over{\to} g$ if   $ \mu\left\{
|g_n-g|>\e\right\}\to 0$, for
any $\e>0$)   is metrizable and, endowed with the metric $d(f,g)=\int_X {|f-g|\over
1+|f-g|}\ d\mu $, $(L^0(\mu),d)$ is a
complete metric space.    
A mapping $V$ from a Banach space $B$ to ${L^0(\mu)}$   is said to be continuous in measure or $d$-continuous,
if for any sequence $(f,f_n,n\ge 1)\subset  B $, we have $  d(Sf_n, Sf)\to 0$
whenever $
  \|f_n-f\|\to 0$. 
\vskip 2 pt

Now let $1\le p\le \infty$ and consider sequence of  operators  $\{S_n,n\ge 1\}$,    $S_n\colon L^p(\mu)\to
L^0(\mu)$, which are continuous in measure.
By the Banach principle,  the set 
$$ \mathcal F(S)= \big\{f\in L^p(\m): \{S_nf, n\ge
1\}  \ \text{converges}\  \m-\text{almost everywhere}\big\}$$
  is closed in $L^p(\m)$    if
and only if:
   
{\it There exists a non-increasing function $C\colon \R^+\to \R^+$
with $\lim_{\a\to \infty} C(\a)=0$, and such that for any $\a\ge 0$ and any  $f\in L^p(\m)$,} 
  $$ \m \big\{S^*f>\a \|f\|_{p, \m}\big\} \le C(\a) \qq \quad \text{where} \ \ S^*f= \sup_{n\ge 1} |S_nf| .$$  
 \vskip 2 pt \noi 
When
the sequence $S$ commutes with a  sequence $  \{ \tau_j, j\ge 1\}$ of measurable
transformations of  $X$ preserving $\m$  and mixing in the following
sense:
$$
\forall A,B\in \mathcal{ A}, \ \forall \a>1,\   \exists j\ge 1\colon\quad \m(A\cap
\tau_j^{-1}B)\le \a \m(A)\m(B),\eqno(H)
$$
and $1\le p\le 2$, then by the continuity
principle $C(\a) = \mathcal{ O}(\a^{-p})$. 

\vskip 1 pt \noi This is fulfilled  if
$S$ commutes with an ergodic endomorphism of $(X,\mathcal{ A}, \mu)$. So
that the study of the convergence almost everywhere of the sequence
$S$  amounts,  under appropriate  commutation assumptions,   to
establish  a maximal inequality  and to exhibit  a dense subset of
$L^p(\m)$  for which the convergence almost everywhere already
holds.    

Before stating the Continuity Principle,    \begin{theorem} 
Suppose that  $\{S_n,n\ge 1\}$  is  a sequence of  operators, $S_n\colon L^p(\mu)\to
L^0(\mu)$, $1\le p\le 2$, which are continuous in measure and
satisfy  the commutation assumption  $(  H)$.
  Then the  following  properties are equivalent:
 \begin{eqnarray*} {\rm (i)} & &   \forall f\in L^p(\mu),\quad  \mu\{x: S^*f
(x)<\infty\}=1.
\cr {\rm (ii)} & & \exists
0<C<\infty  : \forall f\in L^p(\mu), \quad   \sup_{\lambda  \ge
0}\lambda  ^p\mu\{x:S^*f(x) >\lambda  \}\le
 C\int_X |f|^p\ d\mu.
 \end {eqnarray*}
\end{theorem}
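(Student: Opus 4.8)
The plan is to prove the two implications separately, with the substantive content living in $(\mathrm{i})\Rightarrow(\mathrm{ii})$; the reverse implication is essentially immediate. For $(\mathrm{ii})\Rightarrow(\mathrm{i})$, observe that the weak-type bound $\sup_\lambda \lambda^p\mu\{S^*f>\lambda\}\le C\|f\|_p^p$ forces $\mu\{S^*f=\infty\}=0$ for every $f\in L^p(\mu)$, since otherwise $\lambda^p\mu\{S^*f>\lambda\}\ge\lambda^p\mu\{S^*f=\infty\}\to\infty$ as $\lambda\to\infty$, contradicting finiteness of $C\|f\|_p^p$.

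For $(\mathrm{i})\Rightarrow(\mathrm{ii})$, the first step is to invoke the Banach principle in the form recalled in the text: since each $S_n$ is continuous in measure and $S^*f<\infty$ $\mu$-a.e. for every $f\in L^p(\mu)$, there exists a non-increasing $C\colon\R^+\to\R^+$ with $C(\alpha)\to 0$ as $\alpha\to\infty$ such that $\mu\{S^*f>\alpha\|f\|_{p,\mu}\}\le C(\alpha)$ for all $f$ and all $\alpha\ge 0$. (Here one should note that $\mathcal F(S)$ being closed is not literally the hypothesis, but the relevant half of the Banach principle argument — that pointwise finiteness of the maximal function yields a uniform weak-type distributional bound by a Baire category / uniform boundedness argument in the complete metric space $(L^0(\mu),d)$ — applies verbatim.) So after this step we have a maximal inequality, but with an unspecified decreasing function $C(\alpha)$ rather than the sharp power $\alpha^{-p}$.

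The heart of the proof is then to upgrade $C(\alpha)\to 0$ to $C(\alpha)=\mathcal O(\alpha^{-p})$ using the commutation with the mixing family $\{\tau_j\}$ satisfying $(H)$. The idea, going back to Stein's continuity principle, is a tensorization/self-improvement: fix $f$ with $\|f\|_p=1$, fix $\alpha>1$, and pick a large integer $N$. Using $(H)$ repeatedly one selects indices $j_1,\dots,j_{N-1}$ so that the sets $A_k$ on which $S^*(f\circ\tau_{j_k})$ is large are ``almost independent'' — more precisely, one builds from $f$ and the transformations $\tau_{j_k}$ a single function $g\in L^p(\mu)$, roughly $g=\sum_{k} c_k\, f\circ\tau_{j_k}$ with suitable scalars $c_k$ (or one works with $\max_k$ of translates on nearly disjoint supports), whose $L^p$ norm is controlled while $S^*g$ is forced to be large on a set whose measure, by the near-independence coming from $(H)$, is essentially $1-(1-\mu\{S^*f>\alpha\})^N$. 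Feeding this back into the inequality $\mu\{S^*g>\alpha'\|g\|_p\}\le C(\alpha')$ and optimizing over $N$ and the scalars yields, after the standard computation, the bound $\mu\{S^*f>\lambda\}\le C\lambda^{-p}$ with $C$ independent of $f$; the restriction $1\le p\le 2$ enters precisely here, because the $L^p$ norm of a superposition of nearly independent translates is comparable to the $\ell^p$ norm of the coefficients only in this range (for $p\le 2$ one has $\|\sum c_k f\circ\tau_{j_k}\|_p \lesssim (\sum |c_k|^p)^{1/p}\|f\|_p$ in the relevant almost-disjoint/almost-independent regime).

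The main obstacle is exactly this last self-improvement step: implementing the iterated use of $(H)$ to manufacture the auxiliary function $g$ with simultaneously controlled norm and large maximal function, and carrying out the optimization cleanly — in particular tracking that the constant does not degenerate and that the ``almost independence'' error terms (the factors $\alpha$ lost at each application of $(H)$, raised to a power $N$) can be absorbed. One must be careful that $(H)$ is only an \emph{upper} mixing bound with a multiplicative loss, so the construction has to be arranged so that these losses compound controllably; this is the delicate bookkeeping that the detailed proof will need to handle, and it is where the hypothesis $1\le p\le 2$ is genuinely used.
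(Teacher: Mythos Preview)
Your outline has the right architecture --- Banach principle plus a self-improvement step exploiting the commutation with a mixing family --- but the self-improvement step as you describe it has a genuine gap: you never invoke \emph{randomization}, and without it the construction of the auxiliary function $g$ does not work. Taking $g=\sum_k c_k\, f\circ\tau_{j_k}$ with deterministic scalars gives no lower bound on $S^*g$ on $\bigcup_k\{S^*(f\circ\tau_{j_k})>\lambda\}$, because of cancellation; and the alternative you mention, ``$\max_k$ of translates on nearly disjoint supports'', is unavailable since the $\tau_j$ are measure-preserving and the supports of $f\circ\tau_{j_k}$ all have the same measure. The paper's route (following Stein, with Gaussian weights in place of Rademacher) is to form the \emph{random} element
\[
F_{n,f}=\frac{1}{\sqrt n}\sum_{j\le n} g_j\, T_j f
\]
and to establish the key distributional inequality (2.2),
\[
\frac{n\mu\{S^*f>M(1+n)^{1/p}\}-2}{n\mu\{S^*f>M(1+n)^{1/p}\}}\ \le\ 8\,\E\,\mu\{S^*(F_{n,f})>cM\},
\]
after which the Banach principle is applied to $F_{n,f}$ (note the order is the reverse of what you propose). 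The randomization is exactly what lets one simultaneously (a) force $S^*(F_{n,f})$ to be large with positive probability wherever many of the $T_j(S_nf)$ are large, via anticoncentration of the random sum, and (b) control $\E\|F_{n,f}\|_{p,\mu}$.

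Your account of where $1\le p\le 2$ enters is also off. It is not an ``almost-disjoint support'' phenomenon; rather, for $p\le 2$ the map $t\mapsto t^{p/2}$ is concave, so by Khintchine and Jensen
\[
\E\int\Big|\sum_{j\le n}\epsilon_j T_jf\Big|^p\,d\mu\ \asymp\ \int\Big(\sum_{j\le n}|T_jf|^2\Big)^{p/2}d\mu\ \le\ n^{p/2-1}\sum_{j\le n}\int|T_jf|^p\,d\mu\ =\ n^{p/2}\|f\|_p^p,
\]
whence $\E\|F_{n,f}\|_{p,\mu}^p\lesssim\|f\|_p^p$. This is precisely the estimate that fails for $p>2$ (and explains Sawyer's need for positivity in that range, cf.\ Remark~\ref{sawyer}). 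So the missing idea is: make the coefficients random, prove the transference inequality (2.2), and only then feed the random function into the Banach principle.
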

\begin{remark}\label{sawyer} If $p>2$, the same conclusion holds for positive operators ($S_nf\ge 0$, if $f\ge 0$). This was proved later by Sawyer in \cite{Sa}.
 \end{remark}
 The proof   combines quite subtely and   remarkably,
analysis  and probability. The commutation property of the operators $S_n$ is crucial, and makes the proof possible.  Earlier, Kolmogorov used already in \cite{Ko2} the fact that the operators
$$ H_nf(x) = \int_{|t|>1/n} f(x-t) \frac{\dd t}{t}, \qq \quad f\in L^1_{{\rm loc} }(\R)$$
all commute with translations to prove the similar   inequality: let $H^*f(x)= \sup\{ |H_nf(x)|, n\ge 1\}$, then 
$$  \sup_{\lambda  \ge
0}\lambda m\{ x:  H^*f(x) > \l\}\le  C\int_{\R} |f( x) |{\dd x} ,$$ 
$m$ denoting here the Lebesgue measure on $\R$. The setting considered in \cite{S} is group theoretic:   $\O$ is a commutative compact group,   $\m$ is the Haar measure and $S_n$ are commuting with translations.   
Sawyer \cite{Sa} showed that this setting is not necessary and that a general principle can be derived under  the above assumptions. We refer to the 
nice  monograph of Garsia \cite{G}. 
\vskip 5 pt  
 The Continuity Principle is {\it not only} a tool for studying  integrability of maximal operators $ S^*f$,
but also a device for producing counterexamples in problems of convergence almost everywhere. This was observed and studied by Stein \cite{S}, but also by
Burkholder \cite{Bu} and Sawyer \cite{Sa}. That important aspect of this principle seems to have been forgotten over the years. In \cite{S}, Stein  
has established other forms of this  principle with quite striking applications,     proving notably negative convergence results.   One
of these applications concerns a deep result of Kolmogorov
 \cite{Ko3},  \cite{Ko4} showing  the existence of an integrable function  whose Fourier series diverges almost everywhere. The proof is known to be very difficult.  Using a
suitable form of his principle for the space $L^1(\m)$, Stein could  refine  and also provide  a simpler proof of  Kolmogorov's result. 
Convergence criteria   for this space are not frequent, and reveal   crucial in many deep questions. We  recall it now. 
\vskip 2 pt  
We assume here that $X$ is a commutative compact group     and denote by ``$+$'' the
group operation. Let $\m$ be the unique invariant measure, the Haar
measure on $X$.
 Let  $\mathcal{ C}(X)$ 
be  the space of continuous functions on $X$, with the
supremum norm, and $\mathcal B(X)$ be  the space of finite Borel
measures on $X$ with the usual norm. Let $\{S_n, n\ge 1\}$ be a
sequence of operators. We assume:
\smallskip

 (a)\ Each $S_n$ is a bounded operator from $L^1(\m)$ to $\mathcal{ C}(X)$.\smallskip

(b)\ Each $S_n$ commutes with translations.
\smallskip

   By   Riesz's representation of bounded linear functionals on
$L^1(\m)$,   conditions (a) and (b) are
equivalent with
\smallskip

 (c) $S_n f(x) = \int_X K_n(x-y) f(y) \m(dy)$, where $K\in L^\infty (X)$.

\smallskip
\noi Such an operator has a natural extension to a bounded operator
from $\mathcal B(X)$ to $L^\infty (\m)$, which we   again denote by $S_n$. Notice
that this extension still commutes with translations. Similarly, we
also write $S^*\nu= \sup_{n\in \N}|S_n\nu| $.

\begin{theorem}\label{sl1}
 Under   assumptions (a) and (b), the  following
assertions are equivalent:
 \begin{equation}\label{sl11} \forall f\in L^1(\m),\quad  \m\{x: S^*f
(x)<\infty\}=1, 
 \end{equation} 
\begin{equation}\label{sl12}
 \exists 0<C<\infty \colon\ \forall \nu\in \mathcal B(X),\
 \quad  \sup_{\lambda  \ge 0}\lambda\,  \m \Big\{x:S^*\nu(x) >\lambda \int_X |d\nu|   \Big\} \le   C  . 
 \end{equation}
\end{theorem}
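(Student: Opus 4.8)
The plan is to prove the two equivalences of Theorem~\ref{sl1} separately, the implication $(\ref{sl12})\Rightarrow(\ref{sl11})$ being routine and $(\ref{sl11})\Rightarrow(\ref{sl12})$ carrying all the content. For the easy direction one applies $(\ref{sl12})$ to the measure $\nu=f\,\dd\m$, for which $\int_X|d\nu|=\|f\|_1$ and $S^*\nu=S^*f$: this gives $\m\{S^*f>\l\|f\|_1\}\le C/\l$ for all $\l>0$, hence $\m\{S^*f=\infty\}=0$. For the converse --- Stein's $L^1$ principle --- I would first extract from $(\ref{sl11})$ a \emph{soft} maximal bound. By the finiteness variant of the Banach principle ($\m$-a.e.\ finiteness of $S^*f$ for every $f\in L^1(\m)$ forces equicontinuity at $0$ of the $S_n$ viewed as maps into $(L^0(\m),d)$; this is legitimate because each $S_n$ is convolution with $K_n\in L^\infty$ and hence continuous in measure) there is a non-increasing $C\colon\R^+\to\R^+$ with $C(\a)\to0$ and $\m\{S^*f>\a\|f\|_1\}\le C(\a)$ for all $f\in L^1(\m)$. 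Taking $f\equiv1$ also yields $K_0:=\sup_n\|S_n1\|_\infty<\infty$, which lets us replace any $f$ by its mean-zero part $f-\int_Xf\,\dd\m$ at the cost of only an additive change $\le K_0\|f\|_1$ in $S^*f$.

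The heart is to upgrade $C(\a)\to0$ to the sharp $C(\a)=\mathcal{O}(1/\a)$ \emph{for densities}, and this is where translation invariance is indispensable. Suppose no constant $C_0$ makes $\m\{S^*g>\a\|g\|_1\}\le C_0/\a$ hold for all $g\in L^1(\m)$: then for each $k$ there are $f_k\in L^1(\m)$ with $\|f_k\|_1\le1$, $\int_Xf_k\,\dd\m=0$ (by the reduction above), and $\l_k>0$ with $\m(E_k)\ge c\,4^k/\l_k$ for an absolute $c>0$, where $E_k=\{S^*f_k>\l_k\}$; in particular $\l_k\to\infty$. We may also assume $\m(E_k)\to0$, since otherwise along a subsequence $\m(E_k)\ge\d>0$ and then $C(\l_k)\ge\m\{S^*f_k>\l_k\|f_k\|_1\}\ge\m(E_k)\ge\d$ already contradicts $C(\l_k)\to0$. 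Set $m_k:=\lceil\m(E_k)^{-1}\rceil$ and let $\omega_1,\dots,\omega_{m_k}$ be independent Haar-distributed points of $X$. By translation invariance of $\m$ the set $U:=\bigcup_{i=1}^{m_k}(\omega_i+E_k)$ has $\E\m(U)=1-(1-\m(E_k))^{m_k}\ge1-e^{-1}$, so we may fix the $\omega_i$ with $\m(U)\ge\rho:=1-e^{-1}$. For $x\in U$ choose a hit index $i_0$ and a level $n_0$ with $|S_{n_0}f_k(x-\omega_{i_0})|>\l_k$; with $g_k:=\sum_{i=1}^{m_k}\tau_{\omega_i}f_k$ (so $\|g_k\|_1\le m_k$), translation invariance of $S_{n_0}$ gives $S_{n_0}g_k(x)=S_{n_0}f_k(x-\omega_{i_0})+\sum_{i\ne i_0}S_{n_0}f_k(x-\omega_i)$. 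The technical core --- the genuinely probabilistic step, and the one that confines the principle to $1\le p\le2$ (for $p>2$ one needs positivity of the $S_n$ instead, cf.\ Remark~\ref{sawyer}) --- is to show that, after enlarging $m_k$ by a bounded factor if necessary, the interference term $\sum_{i\ne i_0}S_{n_0}f_k(x-\omega_i)$ stays $\le\l_k/2$ off a set of $x$ of measure $<\rho/2$: the mean-zero normalization makes these terms amenable to a second-moment/Chebyshev estimate under the conditional law of $(\omega_i)_{i\ne i_0}$ given the hit, and the delicacy is that this must be done uniformly in the possible values of $i_0$ and $n_0$, which depend on $x$. Granting it, $\m\{S^*g_k>\l_k/2\}\ge\rho/2$, whereas $\a_k:=\l_k/(2m_k)\to\infty$ (since $m_k$ is of order $\m(E_k)^{-1}\le\l_k/(c\,4^k)$) and $\a_k\|g_k\|_1\le\l_k/2$; hence $C(\a_k)\ge\m\{S^*g_k>\l_k/2\}\ge\rho/2$, contradicting $C(\a_k)\to0$. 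Therefore $\m\{S^*g>\a\|g\|_1\}\le C_0/\a$ holds for all $g\in L^1(\m)$.

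Finally I would pass from densities to measures by mollification. Given $\nu\in\mathcal B(X)$ with $\int_X|d\nu|=1$ and a nonnegative approximate identity $(g_\e)$ with $\|g_\e\|_1=1$, set $\nu_\e:=\nu\star g_\e\in L^1(\m)$, so $\|\nu_\e\|_1\le1$; since $S_n$ is convolution with $K_n$ we have $S_n\nu_\e=(S_n\nu)\star g_\e$ with $S_n\nu\in L^\infty(\m)\subset L^1(\m)$, whence $S_n\nu_\e\to S_n\nu$ in $L^1(\m)$ and, after passing to a subsequence and diagonalizing over $n$, $\m$-a.e.; thus $S^*\nu\le\liminf_jS^*\nu_{\e_j}$ $\m$-a.e. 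For $\a'<\a$, Fatou's lemma together with the density bound gives $\m\{S^*\nu>\a\}\le\liminf_j\m\{S^*\nu_{\e_j}>\a'\}\le C_0/\a'$, and letting $\a'\uparrow\a$ and rescaling in $\nu$ yields $(\ref{sl12})$ with $C=C_0$. The one serious obstacle in this scheme is the interference estimate of the middle step; everything else is soft analysis or bookkeeping.
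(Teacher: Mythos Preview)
The paper does not actually give a proof of Theorem~\ref{sl1}; it states the result, refers to Stein~\cite{S}, and only indicates the mechanism of the proof of the companion Theorem~2.1 via the randomized elements $F_{J,f}=\frac{1}{\sqrt J}\sum_{j\le J}g_jT_jf$ with Gaussian (or Rademacher) weights, culminating in the key inequality~(2.4). So there is no detailed proof in the paper to compare against; what one can compare is the \emph{scheme}.

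Your easy direction and your final mollification step (passage from $L^1$ densities to measures) are correct and are exactly how one completes the argument once the weak~$(1,1)$ bound is known on $L^1(\m)$. The substantive step is the upgrade $C(\a)\to 0\ \Rightarrow\ C(\a)=O(1/\a)$ on $L^1(\m)$, and here your proposed route departs from Stein's: you use random \emph{translates} $g_k=\sum_{i\le m_k}\tau_{\omega_i}f_k$ but no random \emph{signs}, relying on the mean-zero normalization of $f_k$ to control the interference $\sum_{i\ne i_0}S_{n_0}f_k(x-\omega_i)$ by a second-moment/Chebyshev bound.

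This is where the argument breaks, and not merely as a technicality you have left to the reader. Conditionally on the hit, the variance of the interference is of order $m_k\,\|S_{n_0}f_k\|_{2,\m}^2$, and you have no control on $\|S_{n_0}f_k\|_{2,\m}$: from $\|f_k\|_{1,\m}\le 1$ one only gets $\|S_{n_0}f_k\|_{2,\m}\le\|K_{n_0}\|_{2,\m}$, which is unbounded in $n_0$ (think of Dirichlet kernels). Worse, $n_0=n_0(x,\omega_{i_0})$ depends on $x$, so there is no single $n$ for which to perform the Chebyshev step; attempting to control $\sup_n|\sum_{i\ne i_0}S_nf_k(x-\omega_i)|$ brings back $S^*$ of another $L^1$ function and the argument becomes circular. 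The mean-zero reduction buys you $\E_\omega S_nf_k(\cdot-\omega)=0$ but not the needed second-moment smallness. (Your intuition that this step ``confines the principle to $1\le p\le 2$'' is pointing at the right phenomenon, but the device that actually implements it is random signs, not translates alone.)

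Stein's argument --- and the one the paper alludes to through the elements $F_{J,f}$ --- inserts independent Rademacher or Gaussian weights into the sum of translates. Those weights are what produce the right inequality (the paper's display~(2.4)) comparing the tail of $S^*f$ with $\E\,\m\{S^*(F_{n,f})>cM\}$; the Banach-principle bound on the latter then forces $\sup_\l\l\,\m\{S^*f>\l\}\le C\|f\|_{1,\m}$. If you want to repair your scheme, the minimal change is to replace $g_k=\sum_i\tau_{\omega_i}f_k$ by $\sum_i\e_i\,\tau_{\omega_i}f_k$ with Rademacher $\e_i$, and to run the estimate through the key comparison rather than through a raw second-moment bound on a single (and $x$-dependent) $S_{n_0}$.
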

 To give an idea of its strength, let us show how recover   Kolmogorov's theorem. Introduce the necessary notation. We denote 
 throughout this article   by $\T$ the circle $\R/\Z\sim[0,1[$. 
 
 Take $X=\T$ and  $\m$ be  the normalized Lebesgue measure  on $\T$.
   Let $S_n(f)$
denote here the partial sum of order $n$ of the Fourier series of $f$,
and more generally let  $S_n(\nu)$    be the partial sum  of order $n$ of the
Fourier--Stieltjes expansion of a Borel measure $\nu$. Recall that  for any  integrable $f$,
 $$S_nf(x) -S_mf(x) =\mathcal{ O}  ( \log|m-n|  ), \qq m,n \rightarrow \infty, $$
almost everywhere. Stein proved  the
following refinement: 
\begin{theorem}\label{ks} Let $\p(n)$ be any function tending  to zero as $n$ tends
to infinity. Then  there exists an integrable function $f(x)$ such that the
more restrictive property
\begin{equation}\label{srefin}S_n(f )(x) -S_m(f)(x)  =\mathcal{ O}  ( \p(|m-n|) \log|m-n|  )   
\end{equation}
is false for almost every $x$.
\end{theorem}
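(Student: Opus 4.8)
\medskip
\noindent\emph{Proof plan.} First I would normalise $\p$. Since $\mathcal O(\p(|m-n|)\log|m-n|)=\mathcal O(|\p(|m-n|)|\log|m-n|)$, and if (\ref{srefin}) fails a.e.\ for the larger non-increasing majorant $n\mapsto\sup_{k\ge n}|\p(k)|$ then it fails a.e.\ for $\p$, we may assume $\p$ positive and non-increasing (if $\p$ vanishes from some index on the conclusion is trivial). Next, for integers $m\ne n$ with $|m-n|\ge3$ put
$$T_{m,n}f=\frac{S_nf-S_mf}{\p(|m-n|)\log|m-n|}.$$
Since $S_n$ is convolution with the Dirichlet kernel, each $T_{m,n}$ is a bounded operator from $L^1(\T)$ to $\mathcal C(\T)$ commuting with translations, so the countable family $\{T_{m,n}\}$ satisfies hypotheses (a), (b) of Theorem~\ref{sl1}; its maximal operator is
$$Mf(x)=\sup_{|m-n|\ge3}\frac{|S_nf(x)-S_mf(x)|}{\p(|m-n|)\log|m-n|}.$$
Because $|S_nf(x)-S_mf(x)|\le2|m-n|\,\|f\|_{1}$ stays bounded whenever $|m-n|$ is bounded, property (\ref{srefin}) holds at $x$ precisely when $Mf(x)<\infty$; hence (\ref{srefin}) is false for a.e.\ $x$ exactly when $Mf=\infty$ a.e. By Theorem~\ref{sl1} it is thus enough to refute the weak inequality (\ref{sl12}) for the family $\{T_{m,n}\}$, and --- in order also to reach the a.e.\ statement --- to do so with some quantitative strength.

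\medskip
\noindent The heart of the matter is the following claim: \emph{for every $R\ge1$ there are a trigonometric polynomial $P_R$ with $\|P_R\|_{1}\le1$ and a set $E_R\subset\T$ of measure as close to $1$ as we please, on which $MP_R\ge R$.} The mechanism is to force the oscillation of the partial sums to take place over frequency blocks of \emph{unbounded} length. Pick $d_R$ with $\p(d_R)\le 1/(2R)$ (possible because $\p\to0$) and build $P_R$, a quantitative refinement of Kolmogorov's construction of $L^1$-small trigonometric polynomials with large partial sums, so that for every $x\in E_R$ there are $m<n$ with $d:=n-m\ge d_R$ and
$$\left|\sum_{m<|k|\le n}\widehat{P_R}(k)\,e^{2\pi ikx}\right|=|S_nP_R(x)-S_mP_R(x)|\ \ge\ \tfrac12\log d.$$
Then $MP_R(x)\ge(\tfrac12\log d)/\bigl(\p(d)\log d\bigr)=1/(2\p(d))\ge R$, and this succeeds \emph{whatever the rate at which $\p$ decays} --- which is exactly the role of the hypothesis $\p\to0$ (for $\p\equiv\text{const}$ the assertion is false, by Zygmund's theorem that $S_nf-S_mf=\mathcal O(\log|m-n|)$ a.e.\ for every $f\in L^1$). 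I expect \emph{this construction to be the main obstacle}: one must exhibit, with $L^1$ norm $\le1$, an annular Dirichlet-type sum of full logarithmic size $\asymp\log(\text{block length})$ over a block of prescribed length, on a set of measure near $1$, while keeping under control the interference between the humps centred at the various points used to cover $\T$.

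\medskip
\noindent Granting the claim, testing (\ref{sl12}) with the measure $\nu=P_R\,d\m$ (so $\int_{\T}|d\nu|=\|P_R\|_{1}\le1$) and $\l=R/2$ makes the left side at least $\tfrac R2\,\m(E_R)$, hence unbounded in $R$; so (\ref{sl12}) fails, and Theorem~\ref{sl1} already yields an $f\in L^1(\T)$ with $\m\{Mf=\infty\}>0$. To push this up to full measure I would run a Baire category argument in the complete space $L^1(\T)$, using that $M$ commutes with translations. The sets $U_{N,k}=\{f\in L^1(\T):\m\{Mf>N\}>1-1/k\}$ are open, because the finite truncations $f\mapsto\max_{j\le J}|T_jf|$ of $M$ are continuous from $L^1(\T)$ into $\mathcal C(\T)$; and they are dense, since given $g$ and $\d>0$ one picks a trigonometric polynomial $q$ with $\|q-g\|_{1}<\d/2$ --- we may assume $\liminf_n\p(n)\log n>0$, for otherwise $S_nf-S_0f=f-\widehat f(0)$ already makes (\ref{srefin}) false a.e.\ for every non-constant polynomial $f$ and we are done; under this assumption $Mq$ is bounded on $\T$ --- and then sets $f=q+\d'P_R$ with $\d'<\d/2$ and $R$ so large that $\d'MP_R-Mq>N$ on $E_R$, so that by sublinearity $Mf>N$ on $E_R$, whence $f\in U_{N,k}$ and $\|f-g\|_{1}<\d$. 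Consequently $\bigcap_{N,k}U_{N,k}$ is a dense $G_\d$, and every $f$ in it satisfies $\m\{Mf=\infty\}=1$, i.e.\ makes (\ref{srefin}) false for a.e.\ $x$.
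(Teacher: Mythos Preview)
Your overall architecture is right, but the heart of the argument --- your ``claim'' producing the polynomials $P_R$ --- is left as a black box, and this is not a minor gap: what you are asking for is essentially a quantitative version of Kolmogorov's divergent-Fourier-series construction, which is exactly the hard theorem that Theorem~\ref{ks} is meant to \emph{recover} by soft means. You have correctly identified this as ``the main obstacle,'' but you have not proposed any mechanism to overcome it, so as it stands the proof is circular in spirit: to avoid Kolmogorov's theorem you would need Kolmogorov's construction.

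The paper sidesteps this difficulty entirely, and the key observation you are missing is that Theorem~\ref{sl1} lets you test the weak-type inequality \eqref{sl12} against \emph{arbitrary} Borel measures, not just absolutely continuous ones $\nu=P_R\,d\m$. The paper takes $\nu=\tfrac1N\sum_{i=1}^N\d_{x_i}$ with the $x_i$ rationally independent and close to $i/N$; a short explicit computation (Lemma~\ref{ls1}) then shows that for every $k$ one has $\sup_{|n-m|=k}|S_n(\nu)-S_m(\nu)|\ge C\log k$ almost everywhere, with no appeal to Kolmogorov-type constructions. This immediately refutes \eqref{sl12} for the family $\{\Delta_{(m,n)}\}$, because the denominator $\p(k)\log k$ is $o(\log k)$. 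For the upgrade to full measure, the paper does not use Baire category but rather the observation that the set $\{\sup_{m,n}|\Delta_{(m,n)}f|<\infty\}$ is invariant under an irrational rotation, hence has measure $0$ or $1$; this folds the ``positive measure $\Rightarrow$ full measure'' step directly into the contradiction, making your separate density argument (and its case-split on $\liminf\p(n)\log n$) unnecessary.
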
  This of course implies Kolmogorov's theorem. For the proof, consider the family of operators
$$\D_{(m,n)}f= {S_n(f)  -S_m(f) \over \p(|m-n|) \log|m-n|}. $$
These operators satisfy  conditions (a) and (b) of Theorem \ref{sl1}. A lemma is necessary.

\begin{lemma}\label{ls1}
There exists an absolute constant $C$ such that for any integer $k$, there exists a measure $\nu$ on $\mathbb{T}$ with $\int_\mathbb{T} |d\nu| =1$ and
$$
\sup_{n,m  : |n-m|=k}\big|S_n(\nu)-S_m(\nu)\big| \ge C \log k\qq \text{ almost  surely}.
$$
\end{lemma}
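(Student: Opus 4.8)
The plan is to exhibit, for each $k$, an explicit measure $\nu$ on $\T$ of total variation $1$ whose Fourier--Stieltjes partial sums oscillate by at least $C\log k$ between two indices differing by $k$. The natural candidate is a single Dirac mass, say $\nu=\d_0$ (the unit point mass at $0$), possibly after a harmless translation. Then the Fourier--Stieltjes coefficients are $\hat\nu(j)=1$ for all $j$, so $S_n(\nu)(x)=D_n(x)$, the Dirichlet kernel $D_n(x)=\sum_{|j|\le n}e^{2\pi i jx}=\frac{\sin((2n+1)\pi x)}{\sin(\pi x)}$. Consequently $S_n(\nu)(x)-S_m(\nu)(x)=D_n(x)-D_m(x)$, and with $|n-m|=k$ we must show that for a.e.\ $x$ (or at least on a set of positive measure, which combined with translations and the phrasing ``almost surely'' suffices) one can choose such a pair of indices with $|D_n(x)-D_m(x)|\ge C\log k$.

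The key computation is the logarithmic growth of partial sums of the \emph{conjugate} Dirichlet-type kernel, or equivalently of $\sum_{j=m+1}^{n} e^{2\pi i jx}$ for suitable $x$. First I would write $S_n(\nu)(x)-S_m(\nu)(x)=\sum_{m<|j|\le n}e^{2\pi ijx}=e^{2\pi i(m+1)x}\frac{e^{2\pi i kx}-1}{e^{2\pi ix}-1}+\text{(conjugate term)}$, a Fej\'er-type partial sum. For $x$ near a point where this geometric sum is large — concretely, taking $x$ of order $1/k$, i.e.\ summing $k$ unimodular terms all pointing in roughly the same direction — the modulus of $\sum_{m<j\le n}e^{2\pi ijx}$ is of order $k$. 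But we need growth of order $\log k$ \emph{robustly}, and the standard route is via the elementary estimate $\bigl|\sum_{j=1}^{N}\frac{e^{2\pi ijx}}{?}\bigr|$; more precisely, the correct mechanism is the classical fact that the \emph{maximal} partial sum $\sup_{n}|D_n(x)|$ behaves like $1/|x|$ while averaging the difference $D_n-D_m$ over $|n-m|=k$ and over $n$ picks up a factor $\log k$. I would therefore fix $x$ and estimate $\max_{|n-m|=k}|D_n(x)-D_m(x)|$ from below by a suitable average over $n$ in a dyadic block, using $\int_{1/k}^{1} \frac{dx}{x}\sim \log k$ after integrating in $x$ over a set of measure bounded below.

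Concretely, the cleanest version: integrate $\max_{|n-m|=k}|D_n(\nu)(x)-D_m(\nu)(x)|$ against Lebesgue measure and bound it below. Using $D_n(x)-D_{n+k}(x)=-\sum_{n<|j|\le n+k}e^{2\pi ijx}$ and orthogonality, $\sum_{n} \int_\T |D_n(x)-D_{n+k}(x)|^2\,dx \asymp N\cdot k$ over a block of $N$ values of $n$, which is too weak; instead one uses the $L^1$-estimate $\int_\T |D_n(x)-D_m(x)|\,dx \gtrsim \log k$ uniformly in $n$ for $|n-m|=k$ — this is the Lebesgue-constant-type lower bound $\int_\T |D_k(x)|\,dx\sim \frac{4}{\pi^2}\log k$, applied to the translate $e^{2\pi i(m+1)x}D_k(x)+\overline{(\cdot)}$. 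From $\int_\T \bigl(\sup_{|n-m|=k}|D_n-D_m|\bigr)\,dx\ge \int_\T|D_0-D_k|\,dx\gtrsim\log k$ together with the trivial uniform upper bound $\sup_{|n-m|=k}|D_n(x)-D_m(x)|\le 2(2n+1)$ one cannot directly conclude a pointwise lower bound; so the honest statement one proves is the averaged one, and the ``almost surely'' in the lemma should be read as ``on a subset of $\T$ of measure bounded away from $0$'', which is exactly what the Continuity Principle (Theorem \ref{sl1}) consumes in the proof of Theorem \ref{ks}.

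The main obstacle is precisely this gap between a pointwise a.e.\ lower bound and an $L^1$-averaged one: a single Dirac mass gives $S_n(\nu)-S_m(\nu)=D_n-D_m$ which does \emph{not} exceed $C\log k$ at every point, only on a large set, so either (a) the lemma is to be interpreted in the weak sense and its proof is the Lebesgue-constant estimate $\int_\T|D_k|\,d\m\gtrsim\log k$ plus a Chebyshev argument to get positive measure, or (b) one must build $\nu$ as a cleverly weighted sum of point masses (a Riesz-product- or Fejér-kernel-based construction) so that the oscillation is genuinely large on a full-measure set; I would pursue (a) first, since it is elementary and matches the way the lemma is used downstream, and only escalate to (b) if the statement truly demands pointwise control.
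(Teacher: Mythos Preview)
Your option (a) does not work, and not merely because the lemma literally says ``almost surely''. With a single Dirac $\nu=\d_0$ and a \emph{fixed} pair $(n,m)=(0,k)$ you only have the $L^1$ lower bound $\int_\T |D_0-D_k|\,d\m\gtrsim\log k$ together with the pointwise upper bound $|D_0-D_k|\le 2k+1$; Chebyshev then yields $\m\{|D_0-D_k|\ge c\log k\}\gtrsim (\log k)/k$, a set whose measure \emph{shrinks} with $k$. Even allowing the supremum over $n$ with $|n-m|=k$, for irrational $x$ one finds (via your own geometric-sum factorisation, since $e^{2\pi i(n+1)x}$ is equidistributed) that
$$\sup_{|n-m|=k}|D_n(x)-D_m(x)|=2\,\frac{|\sin\pi kx|}{|\sin\pi x|},$$
which is bounded (independently of $k$) on any interval bounded away from $0$. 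So a single Dirac gives neither an a.e.\ lower bound nor a uniformly positive measure set, and the downstream contradiction with \eqref{sl12} would not go through.

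Your option (b) is the right direction, but you are missing the mechanism. Take $\nu=\frac1N\sum_{i=1}^N\d_{x_i}$ with the $x_i$ \emph{linearly independent over $\Q$} and very close to $i/N$. Writing $k=n-m$, $\ell=n+m+1$, one has
$$S_n(\nu)(x)-S_m(\nu)(x)=\frac{2}{N}\sum_{i=1}^N \frac{\cos\pi\ell(x-x_i)\,\sin\pi k(x-x_i)}{\sin\pi(x-x_i)}.$$
The crucial point you did not exploit is that the supremum in the statement leaves $\ell$ \emph{free}: for a.e.\ $x$ the numbers $x-x_1,\dots,x-x_N$ are rationally independent, so by Weyl one can choose $\ell$ (depending on $x$) making every $\cos\pi\ell(x-x_i)$ as close to $\pm1$ as desired, with the signs matching those of $\sin\pi k(x-x_i)/\sin\pi(x-x_i)$. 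This converts the expression into the Riemann sum $\frac{2}{N}\sum_i \frac{|\sin\pi k(x-x_i)|}{|\sin\pi(x-x_i)|}$, which for $N$ large approximates $\int_\T \frac{|\sin\pi k t|}{|\sin\pi t|}\,dt\ge C\log k$. This is how the genuine a.e.\ bound is obtained.
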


\begin{proof} Let $x_1,\dots, x_N$ be some points of $\mathbb{T}$ to
be specified later, and set
 $
\nu ={1\over N}\sum_{i=1}^N \d_{x_i}$,
 where $\d_x$ denotes the Dirac measure at point $x$. Then $\int_\mathbb{T} |d\nu| =1$. Plainly,
$$
S_n(\nu)(x) - S_m(\nu)(x)= {2\over N}\sum_{i=1}^N {\cos \pi (n+m+1) (x-x_i)
\sin\pi (n-m)(x-x_i)\over \sin\pi (x-x_j)}.
$$
Write $k=n-m$, $\ell = n+m+1$. Assume that $k$ is odd. Then $\ell$
must be even, but this is the only restriction on $\ell$.  We choose
the $x_i$ to be linearly independent over $\Q$, and such that they
are very close to $i/N$. It is easily seen then,  that for almost
every $x$, the $x-x_i$ are linearly independent over $\Q$. Choosing
$\ell$ large enough, depending on $x$, we have
$$\sup_{n,m  : |n-m|=k} |S_n(\nu)(x)-S_m(\nu)(x) |= {2\over N}\sum_{i=1}^N { |\sin\pi k(x-x_i)|\over |\sin\pi (x-x_j)|} .$$
The fact  that  $x_i$ are  very close to $i/N$ and   $N$ is large
enough, shows that the sum on the right is close to its
 integral counterpart, and so   exceeds
  half of its value. Therefore,
$$\sup_{n,m  : |n-m|=k} |S_n(\nu)(x)-S_m(\nu)(x) |\ge {1\over 2}  \int_\mathbb{T} { |\sin\pi k(x-y)|\over |\sin\pi (x-y)|}dy\ge C\log k, $$
as required.\end{proof}
Now we prove Theorem \ref{ks}. Suppose on the contrary that   property
\eqref{srefin} were true with positive probability, and this for any $f\in L^1(\mathbb{T})$. Let   $\tau$ be an irrational rotation of $\T$, thereby an   ergodic measure preserving transformation.
Note that if  
 $ A= \big\{ \sup_{n,m} |D_{(m,n)}f|<\infty\big\}$, then $\tau ^{-1}(A)\subset A$.  
By Birkhoff's theorem, this suffices to imply   that $\m(A)=1$.  So that  the operators
$\D_{(m,n)}f$ would satisfy   condition \eqref{sl11}.   
Consequently,   the maximal operator
$$
\nu \mapsto \D^*(\nu) :=\sup_{n,m }\Big|{S_n(\nu)(x)-S_m(\nu)(x)\over \p(|m-n|) \log|m-n|}\Big|
$$
would satisfy \eqref{sl12}. Therefore this would imply the existence of a constant $C_0$
such that for any $\nu\in  \mathcal B(M)$  with $\int_\mathbb{T} |d\nu| =1$, and
any $t  \ge 0$,
  $
  t\m \{x : \D^*\nu(x) >t  \}\le   C_0  $.  
\vskip 1 pt
 Let $k$ be a positive integer, which we choose sufficiently large to
ensure that  $\log k> (2C_0)/C $, where $C$ is the same constant as
in Lemma \ref{ls1}. Apply this  for $t =C(\log k)/2$; then,
$$
\m\big\{ x : \D^*\nu(x) >{C\over 2}\log k \big\}\le   {2C_0 \over  C\log k} <1.
$$
By Lemma \ref{ls1}, there exists $\nu\in \mathcal B(M)$
with $\int_\mathbb{T} |d\nu| =1$ such that $\D^*\nu\ge C\log k$ almost
surely. Hence a contradiction and condition \eqref{sl11} cannot hold.
 Therefore there exists
an integrable function such that property \eqref{srefin} is false for
almost every $x$.

For recent   results related to Kolmogorov's theorem, see Lacey's very nice paper \cite{La1}, Section 9.3. We   refer to \cite{S} (see also \cite[Chapter 5]{We}) for several other applications of this kind. 
\vskip 5 pt
To  
$f\in L^2(\m)
$,   associate the sequence  in which we set $T_j f= f\circ \tau_j$, 
   \begin{equation} \label{fj0} F_{J,f} =\frac{1}{\sqrt J}\sum_{1\le j\le J} g_j T_jf,\qq (J\ge 1)
   \end{equation}
where $g_1,g_2,\ldots $ are i.i.d. standard Gaussian random variables, defined on a common joint probability space $(\O, \B, \P)$. 
  
  These random elements (with Rademacher weights instead of Gaussian's) are    key tools in Stein's proof. 
  The same elements (sometimes with stable weights) are
also playing a central role in   Bourgain's entropy criteria and  extensions obtained by the author. The notation used 
in \eqref{fj0} will be later formalized to include these cases, see \eqref{fjgen}. 
Lifshits and Weber studied in \cite{LW1}, \cite{LW2} and \cite{We3} their oscillations properties and the tightness properties of their laws.
\vskip 2 pt
The Continuity Principle is  established in an indirect way in \cite{S}.  A direct proof   with Gaussian weights (as in the proofs of Bourgain's entropy criteria) was given in \cite{We}. 
\vskip 2 pt
 We close this section with an interesting and somehow intriguing observation. The key point of
the proof [at this stage, the Banach Principle is not yet applied] is contained in the following inequality (see  \cite[p.\,211-212]{We})
   \begin{equation}
{ n\mu\{S^{*}(f) >M(1+n)^{1/ p}\}-2 \over n
\mu\left\{S^{*}(f) >M(1+n)^{{1/ p}}\right\}}
\le 8\,
\E
   \mu\big\{ S^{*}(F_{n,f}) >c M\big\}  , \end{equation}
which holds for any  $M>0$,   any integer $n\ge 2$, and $c$ is a numerical constant.  
 Now  by simply  permuting the order of integration, we get
  \begin{equation}
{n\mu \{S^{*}(f) >M(1+n)^{{1/ p}} \}-2 \over n\mu \{S^{*}(f) >M(1+n)^{{1/ p}} \}}\le 8\int_{X}
\P \big\{S^{*}(F_{n,f})>   c M  \big\} \ d\mu, 
\end{equation}
where this time, $S^{*}(f)$ is controlled by its random counterpart of $S^{*}(F_{n,f})$
for an appropriate choice of  the integer $n$. Therefore a good control of the random counterpart also provides a good control of the initial sequence.
\vskip 2 pt  {\bf Notation.} We reserve the letter \,$g$\,  to denote throughout  an $\mathcal{ N}(0,1)$ distributed
random variable. An index or a sub-index always denotes an infinite increasing sequence of positive integers.  
\vskip 2 pt


\section{Metric Entropy Criteria.}  \label{s3} Throughout the remainding part paper, let    $S$ denotes, unless explicitly   mentioned,  a sequence of continuous operators $ S_n\colon L^2(\mu)\to
L^2(\mu)$, $n\ge 1$.
 Using the theory of Gaussian processes, Bourgain has established in  \cite{B}     two   very useful
criteria linking   the regularity properties (boundedness, convergence almost everywhere) of the sequence $S$
  with the metric entropy properties  of the
sets $C_f
$ below. 
\vskip 2 pt   The concept of entropy numbers (namely covering numbers) associated with  a metric space  is old; it was invented by
 Kolmogorov as a device for classifying functional spaces. See Kolmogorov \cite{Ko2}, Kolmogorov and  Tikhomirov \cite{KT},  Lorentz \cite{Lo}. In many situations, these numbers are
computable  (typical examples of sets are ellipsoids, see \cite{Du1}); hence their interest.      
 Recall that any compact set
in a separable Hilbert space is included in some ellipsoid, see  Raimi \cite{R} and for relations between their entropy numbers, see Helemski\u \i\,  and Henkin \cite{HH}.
\vskip 2 pt  Bourgain also showed, by means of imaginative constructions,   how to apply   these criteria to several analysis problems,
among them Marstrand's disproof of Khintchin's   Conjecture,   a problem posed   by Bellow and a question raised by Erd\"os.  This is a quite striking achievement, which adds a new chapter
to Stein's Continuity Principle.  We believe that Bourgain's approach goes beyond the setting explored in \cite{B1}, \cite{B}, \cite{B2} and 
 should   deserve   further investigations. 
 The author has obtained in \cite{We6}, \cite{BW}, \cite{We1}
   extensions of these criteria and applied them to similar
questions. He further   studied in \cite{We2}, \cite{We4}, \cite{We5} the geometry of the sets $C_f$ defined in \eqref{cf}, as well as  and their natural
extension
$C(A)= 
 \left\{S_n(f),n\ge 1, f\in A\right\}$, in which  $A$ is an arbitrary subset of $L^2(\m)$. See Appendix. We also refer    to Talagrand \cite{T1} where
this 
   question  was investigated  in a larger context.
 \vskip 2 pt
Introduce the following commutation condition:
\vskip 2 pt   {\rm (C)}  {\it There exists a sequence $ \{T_j,\ j\ge
1 \}$  of positive isometries of $L^1(\mu)$, with $T_j1=1$,
  such that  for any $f\in L^1(\mu)$, 
\begin{equation}\label{Ctj}   \lim_{J\rightarrow \infty}\big \|\,{1\over J}\sum_{j\le
J}T_jf -\int f d\mu\,\big\|_{1,\mu}=0  , 
\end{equation}
and commuting  with $S$, $
S_n(T_jf)= T_j(S_nf)$ for any  $f\in L^2(\m)$.} 
\vskip 3 pt 
 Consider    for   $2\le
p\le\infty$,  the following convergence   property 
$$ \mu\big\{  \{S_n(f),n\ge 1 \} \text{ converges}
\big\}=1,\qq \qq \text{for all $f\in L^p(\mu)$.}  \leqno(\mathcal{
C}_p)$$  
Set
\begin{equation}\label{cf} C_f=\left\{S_n(f),n\ge 1\right\}, \qq \qq    f\in L^2(\mu). 
\end{equation}
  
  Bourgain's first criterion  \cite[Proposition 1]{B} shows that if $(\mathcal{
C}_p)$   holds for some $2\le p<\infty$, the sets $C_f$ cannot be too large. More precisely,  
 

\begin{theorem} \label{t1}
  Let $S$ be a sequence of $L^2(\mu)$ contractions  satisfying   condition {\rm (C)}. 
   Assume    that $(\mathcal{C}_p)$ holds  for some  $2\le p<\infty$. 
  Then  there exists a    numerical constant $C_0$ such that for any $f\in L^p(\mu)$,
\begin{eqnarray*} \sup_{\e >0} \varepsilon\sqrt{  \log
N_f(\varepsilon)}
 &\le & C_0\|f\|_2, 
\end{eqnarray*}  
where   for any $\e>0$,  $N_f(\varepsilon)$ denotes the minimal   number
of $L^2(\mu)$ open balls of radius~$\e$, centered in $C_f$  and
enough to cover $C_f$.\end{theorem}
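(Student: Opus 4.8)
The strategy is the classical Bourgain randomization argument: encode the convergence-a.e.\ hypothesis $(\mathcal{C}_p)$ as a weak-type maximal inequality via the Banach/Stein machinery, then test that inequality against the random function $F_{J,f}=\frac{1}{\sqrt J}\sum_{j\le J}g_jT_jf$ and exploit the Gaussian structure. The point is that, because $S$ commutes with the isometries $T_j$ (condition (C)) and $\frac1J\sum_{j\le J}T_jf\to\int f\,d\mu$ in $L^1$, the set $C_{F_{J,f}}$ is, up to the random coefficients, a ``$J$-fold Gaussian copy'' of $C_f$, so that $\E S^*(F_{J,f})$ is comparable to the expected supremum of the canonical Gaussian process indexed by $C_f$. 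That expected supremum controls $\sup_{\e>0}\e\sqrt{\log N_f(\e)}$ from below by Sudakov's minoration.

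First I would record that $(\mathcal{C}_p)$, via the $L^p$ Banach principle together with condition (C) (the mixing/averaging property lets us run the same Birkhoff-type argument as in the proof of Theorem~\ref{ks}, replacing the irrational rotation there by the $T_j$), yields a finite constant $C$ with $\sup_{\l\ge0}\l^p\,\mu\{S^*f>\l\}\le C\int|f|^p\,d\mu$ for all $f\in L^p(\mu)$; equivalently there is a non-increasing $C(\cdot)$ with $C(\a)\to0$ and $\mu\{S^*f>\a\|f\|_p\}\le C(\a)$. Next I would fix $f\in L^p(\mu)$ with $\|f\|_2\le1$, an $\e>0$, and a maximal $\e$-separated family $\{S_{n_1}f,\dots,S_{n_N}f\}$ in $C_f$ with $N=N_f(\e)$ points (after the routine passage from covering to packing numbers, losing only a factor in $\e$). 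The key inequality from the end of Section~\ref{s2},
\begin{equation}\label{keyineqproof}
\frac{n\mu\{S^*f>M(1+n)^{1/p}\}-2}{n\mu\{S^*f>M(1+n)^{1/p}\}}\le 8\,\E\,\mu\{S^*(F_{n,f})>cM\},
\end{equation}
shows that if $\mu\{S^*f>M(1+n)^{1/p}\}$ is not too small then $\E\,\mu\{S^*(F_{n,f})>cM\}$ is bounded away from $0$; combined with the weak-type bound $\mu\{S^*f>M(1+n)^{1/p}\}\le C(M)/n$ this forces, after choosing $M$ a large absolute constant and $n$ accordingly, a lower bound of the form $\E\,\mu\{S^*(F_{n,f})>cM\}\ge \d_0>0$. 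Chebyshev in $\mu$ then gives $\E\,S^*(F_{n,f})\ge c'$ on a set of positive $\mu$-measure, hence $\int_X\E\,S^*(F_{n,f})\,d\mu\ge c''$.

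The heart of the matter is the comparison with the Gaussian process. Using the commutation $S_n(T_jf)=T_j(S_nf)$ one has $S_n(F_{J,f})=\frac1{\sqrt J}\sum_j g_j T_j(S_nf)$, so for fixed $x$ the process $n\mapsto S_n(F_{J,f})(x)$ is Gaussian with covariance governed by the $L^2$ geometry of $\{T_j(S_nf)\}$; the averaging hypothesis \eqref{Ctj} guarantees that as $J\to\infty$ these conditional $L^2(\mu)$-distances relax to the intrinsic distances $\|S_nf-S_mf\|_{2,\mu}$ of $C_f$, so $\E\,S^*(F_{J,f})$ (integrated in $x$, or evaluated at the canonical model) is, up to absolute constants, at least $\E\sup_{n\le N}|\Gamma_n|$ where $(\Gamma_n)$ is the canonical Gaussian process on $C_f$. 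Sudakov's minoration (recalled among the Gaussian tools of Section~\ref{s4}) bounds this from below by $c\,\e\sqrt{\log N}=c\,\e\sqrt{\log N_f(\e)}$. Chaining the inequalities gives $\e\sqrt{\log N_f(\e)}\le C_0\|f\|_2$ with $C_0$ absolute, and taking the supremum over $\e>0$ finishes the proof.

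\textbf{Main obstacle.} The delicate point is the passage from the \emph{finite-$J$} random variable $F_{n,f}$ that appears in \eqref{keyineqproof} — where $n$ is pinned to $M$ by the weak-type bound and is \emph{not} free to tend to infinity — to a clean comparison with the \emph{limiting} canonical Gaussian process on $C_f$. One must show that the finite averages $\frac1n\sum_{j\le n}$ already produce conditional distances on $\{S_nf\}$ that dominate a fixed multiple of the intrinsic distances, uniformly over the finitely many pairs among $S_{n_1}f,\dots,S_{n_N}f$; since $N=N_f(\e)$ and $n$ both depend on $\e$ through the weak-type constant, keeping the constants genuinely numerical requires care (this is exactly where the hypothesis that the $S_n$ are \emph{contractions} and the $T_j$ are \emph{positive isometries with $T_j1=1$} is used, to control norms uniformly). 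A secondary nuisance is the standard but slightly fussy conversion between covering numbers $N_f(\e)$ and packing/separation numbers, and the bookkeeping of the absolute constants $c,c',c'',\d_0$ through \eqref{keyineqproof} and Sudakov's inequality.
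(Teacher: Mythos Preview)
Your chain of inequalities does not close. What you must prove is an \emph{upper} bound
\[
\e\sqrt{\log N_f(\e)}\ \le\ C\,\E\sup_{n}Z(S_nf)\ \le\ C'\,\|f\|_2,
\]
the first inequality being Sudakov, the second being the point of the whole argument. You correctly identify the Slepian/comparison step $\E\sup_n Z(S_nf)\lesssim \int_X\E\,\sup_n|S_n(F_{J,f})|\,d\mu$ (that is the content of Lemma~\ref{a2}). But you never bound the right-hand side from above by $\|f\|_2$. Instead, your use of the Stein inequality \eqref{keyineqproof} yields a \emph{lower} bound $\int_X\E\,S^*(F_{n,f})\,d\mu\ge c''$, which points the wrong way and is useless here. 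Worse, the logic surrounding \eqref{keyineqproof} is inconsistent: the weak-type estimate forces $n\mu\{S^*f>M(1+n)^{1/p}\}$ to be bounded, so the left side of \eqref{keyineqproof} is $\le 0$ for large $n$ and gives no information. That inequality is a device to \emph{derive} the weak-type bound in Stein's theorem, not to feed back into the entropy estimate. (Note also that the polynomial weak-type $(p,p)$ you invoke is stated in the paper for $1\le p\le 2$; for $p>2$ one needs positivity, cf.\ Remark~\ref{sawyer}. Fortunately only the plain Banach principle is required.)

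The missing idea is the Gaussian seminorm concentration \eqref{fer}. The paper proceeds as follows: from $(\mathcal{C}_p)$ the Banach principle gives $\mu\{S^*h>C(\e)\|h\|_{p,\mu}\}\le\e^2$ for all $h\in L^p(\mu)$; take $h=F_{J,f}$, integrate in $\P$ and use Fubini to find a set $X_{\e,J,f}$ with $\mu(X_{\e,J,f})\ge 1-\e$ on which $\P\{\sup_n|S_n(F_{J,f})(x)|\le C(\e)\E\|F_{J,f}\|_{p,\mu}\}\ge 1-\e$. For fixed $x$ the map $\o\mapsto\sup_n|S_n(F_{J,f})(\o,x)|$ is a Gaussian seminorm, so \eqref{fer} converts this positive-probability bound into the expectation bound $\E\sup_n|S_n(F_{J,f})(x)|\le \tfrac{4C(\e)}{1-\e}\E\|F_{J,f}\|_{p,\mu}$ on $X_{\e,J,f}$. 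Intersecting with the comparison set of Lemma~\ref{a2} and applying Slepian gives $\E\sup_{n\in I}Z(S_nf)\le K(\e)\,\E\|F_{J,f}\|_{p,\mu}$. Finally, the averaging hypothesis (C) and $T_jf^2=(T_jf)^2$ yield $\E\|F_{J,f}\|_{p,\mu}\le(1+\e)C_p\|f\|_{2,\mu}$ along a sub-index. This is precisely the upper bound you are lacking; once it is in place, Sudakov finishes the proof.
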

 \begin{remark}\label{r1}  By using covering properties of ellipsoids, one can show that the above entropy estimate  is optimal for convolutions on the
circle; and thus admits no improvment. See \cite[p.\,47]{We1}. However, it can be far from optimal on typical examples. Let $S_nf =\frac{1}{n} \sum_{ j\le
n } T^j f$, where $T$ is some measure preserving transformation on $(X, \A, \m)$.  By a theorem of Talagrand
$N_f(\e)\le C\max( 1, \|f\|_{2,\m}^2/\e^2)$, $0<\e \le \|f\|_{2,\m}$ where $C$ is an absolute constant. See \cite{T1}, \cite[Theorem 1.4.1]{We}.
   \end{remark}
Bourgain's second criterion  \cite[Proposition 2]{B} states as follows.
\begin{theorem} \label{t2}
Let $S$ be a sequence of $L^2(\mu)$ contractions  satisfying   condition {\rm (C)}. 
    Assume  that   $(\mathcal{
C}_\infty )$ is fulfilled.
  Then for any real $\d>0$,
$$  C(\delta)=\sup_{f\in L^\infty(\mu),\,  \|f\|_{2, \m}\le 1}N_f(\delta)<\infty.
 $$
\end{theorem}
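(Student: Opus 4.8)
The plan is to argue by contradiction, confronting a Gaussian lower bound coming from Sudakov's minoration with the maximal inequality that the Banach principle extracts from $(\mathcal{C}_\infty)$.

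\textbf{Set-up.} Suppose $C(\delta)=\infty$ for some $\delta>0$. Then there are $f_k\in L^\infty(\mu)$ with $\|f_k\|_{2,\mu}\le 1$ and $N_{f_k}(\delta)\to\infty$. A maximal $\delta$-separated subset of $C_{f_k}$ is a $\delta$-cover of $C_{f_k}$, hence contains at least $N_{f_k}(\delta)$ points; so for each $k$ there exist $n_1<\dots<n_M$, $M=M_k\ge N_{f_k}(\delta)$, with $\|S_{n_i}f_k-S_{n_\ell}f_k\|_{2,\mu}\ge\delta$ for $i\ne\ell$. Fix such a $k$ and write $f=f_k$. We will manufacture from $f$ a bounded function whose maximal function is anomalously large relative to its $L^\infty$-norm, and collide this with the Banach-principle inequality.

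\textbf{A Gaussian cloud inside the system.} Use the randomised function $F_{J,f}=\frac1{\sqrt J}\sum_{j\le J}g_jT_jf$ of \eqref{fj0}. Being positive $L^1$-isometries fixing $1$, the $T_j$ are $L^p$-isometries for every $p$ and act multiplicatively, so $S_n(F_{J,f})=\frac1{\sqrt J}\sum_{j\le J}g_j\,T_j(S_nf)$ and, for each $x$, the vector $(S_{n_i}(F_{J,f})(x))_{i\le M}$ is centred Gaussian in the $g_j$ with covariance entries $\frac1J\sum_{j\le J}T_j\big((S_{n_i}f)(S_{n_\ell}f)\big)(x)$. By condition {\rm (C)} applied to the $L^1$-functions $(S_{n_i}f)(S_{n_\ell}f)$, these averages tend in $L^1(\mu)$, as $J\to\infty$, to $\langle S_{n_i}f,S_{n_\ell}f\rangle_{L^2(\mu)}$, and likewise $\frac1J\sum_j(T_jf)^2\to\|f\|_{2,\mu}^2$; passing to a subsequence in $J$ we may take these limits $\mu$-a.e. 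Hence, for $J$ large and $x$ off a set $E_J$ with $\mu(E_J)\to0$, this Gaussian vector has all pairwise increment variances $\ge\delta^2/2$ while each $\E_g(S_{n_i}(F_{J,f})(x))^2\le 1+o(1)$ (using that $S$ consists of $L^2$-contractions); Sudakov's minoration then gives $\E_g\max_{i\le M}|S_{n_i}(F_{J,f})(x)|\ge c_1\delta\sqrt{\log M}$, and Borell's concentration inequality upgrades this to $\P_g\{\max_{i\le M}|S_{n_i}(F_{J,f})(x)|\ge\frac{c_1}2\delta\sqrt{\log M}\}\ge 1-M^{-c_2\delta^2}$ for absolute $c_1,c_2>0$. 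Integrating over $x$ and using Fubini, we may fix a realisation $\omega$ with $\|F_{J,f}^\omega\|_{2,\mu}\le 2$ and $\mu\{x:\max_{i\le M}|S_{n_i}(F_{J,f}^\omega)(x)|\ge\frac{c_1}2\delta\sqrt{\log M}\}\ge\tfrac12$, once $M$ is large.

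\textbf{Truncation and the clash.} The function $F_{J,f}^\omega$ lies in $L^\infty(\mu)$ but with no control on its norm; set $h=\mathrm{sgn}(F_{J,f}^\omega)\min(|F_{J,f}^\omega|,L)$. For $J$ large, $F_{J,f}^\omega(x)$ is, outside a set of arbitrarily small measure, of size comparable to $\|f\|_{2,\mu}$ in distribution (its pointwise variance $\frac1J\sum_j(T_jf)^2$ being essentially $\|f\|_{2,\mu}^2$), and $f\in L^\infty$ keeps the exceptional set's contribution to $\|F_{J,f}^\omega-h\|_{2,\mu}^2$ under control; so choosing $L$ a suitable multiple of $\sqrt{\log M}$ makes $\|F_{J,f}^\omega-h\|_{2,\mu}$ as small as we please. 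Since the $S_n$ are $L^2$-contractions, Chebyshev's inequality and a union bound over the $M$ indices then force $\max_{i\le M}|S_{n_i}(F_{J,f}^\omega-h)|\le\frac{c_1}4\delta\sqrt{\log M}$ off a set of measure $<\tfrac14$, whence $S^* h\ge\frac{c_1}4\delta\sqrt{\log M}$ on a set of measure $\ge\tfrac14$, with $\|h\|_{\infty,\mu}\le L$. On the other hand, $(\mathcal{C}_\infty)$ and the Banach principle applied on $L^\infty(\mu)$ produce a non-increasing $C(\cdot)\to0$ with $\mu\{S^* h>\alpha\|h\|_{\infty,\mu}\}\le C(\alpha)$ for every $h\in L^\infty(\mu)$; choosing $\alpha_0$ with $C(\alpha_0)<\tfrac14$ gives $\frac{c_1}4\delta\sqrt{\log M}\le\alpha_0 L$. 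Comparing with the admissible range for $L$ yields the desired contradiction once $M=M_k$ is large (the threshold depending only on $\delta$ and the modulus $C(\cdot)$), so $C(\delta)<\infty$.

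\textbf{The main obstacle.} The genuinely delicate point — and the reason the conclusion records $(\mathcal{C}_\infty)$ rather than a hypothetical $(\mathcal{C}_2)$ — is keeping the randomised test function bounded with a \emph{usable} norm: the Gaussian weights inflate $\|F_{J,f}\|_{\infty,\mu}$, yet $J$ must be taken large both for Sudakov's estimate (simultaneous covariance approximation over the $\binom M2$ pairs) and for the pointwise variance to concentrate at $\|f\|_{2,\mu}^2$. The truncation reconciles the two only because the mean ergodic property in {\rm (C)} makes that variance essentially constant, so the $L^2$-overflow is negligible, and because boundedness of $f$ tames the overflow on the small exceptional set; the $L^2$-contractivity of $S$ then transfers the Sudakov lower bound from $F_{J,f}^\omega$ to the bounded truncation $h$, and the fact that $J$ may be chosen arbitrarily large \emph{after} $M$ is fixed keeps the argument free of circularity. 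Getting the constants to close uniformly for all $\delta>0$ and all admissible $\|f\|_{2,\mu}$ — in particular when $\|f\|_{2,\mu}$ is not small relative to $\delta$ — is where additional care is needed (a rescaling reduction via $N_{cf}(\varepsilon)=N_f(\varepsilon/c)$, or substituting a bounded Rademacher-type randomisation supported on Kakutani–Rochlin towers for the Gaussian one, so that no truncation is required at all).
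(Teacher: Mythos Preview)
Your overall strategy mirrors the paper's --- randomise via $F_{J,f}$, obtain a Sudakov lower bound, truncate to return to $L^\infty$, and collide with a Banach-principle inequality --- but the final step does not close.

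You invoke the standard Banach principle in the form $\mu\{S^*h>\alpha\|h\|_{\infty,\mu}\}\le C(\alpha)$. Granting that, your argument yields $\tfrac{c_1}{4}\delta\sqrt{\log M}\le\alpha_0 L$ with $C(\alpha_0)<\tfrac14$. But your own truncation level is $L\asymp\sqrt{\log M}$: to win the union bound $M\cdot\|F_{J,f}^\omega-h\|_{2,\mu}^2/(\delta^2\log M)<\tfrac14$ via the Gaussian tail you need $L^2\gtrsim\log M$, and taking $L$ larger only weakens $\|h\|_\infty\le L$. So the displayed inequality collapses to $\tfrac{c_1}{4}\delta\le\alpha_0\cdot\text{const}$, a relation between fixed constants with no parameter tending to infinity. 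There is no contradiction: both $S^*h$ and $\|h\|_{\infty,\mu}$ scale like $\sqrt{\log M}$, and the maximal inequality against the $L^\infty$-norm alone is blind to this.

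The missing ingredient is the \emph{two-norm} continuity statement \eqref{0cont} (the Bellow--Jones form of the Banach principle for $L^\infty$, Lemma~\ref{bainfty}): from $(\mathcal C_\infty)$ one has
\[
\sup_{\|h\|_{\infty,\mu}\le 1,\ \|h\|_{2,\mu}\le\varepsilon}\int_X\frac{S^*h}{1+S^*h}\,d\mu\ \longrightarrow\ 0\qquad(\varepsilon\to 0).
\]
Your construction already delivers what this needs. After rescaling, $\|h/L\|_{\infty,\mu}\le 1$, while $\|h/L\|_{2,\mu}\le\|F_{J,f}^\omega\|_{2,\mu}/L\le 2/L\lesssim 1/\sqrt{\log M}\to 0$, and yet $S^*(h/L)\ge\tfrac{c_1}{4}\delta$ on a set of measure $\ge\tfrac14$ --- exactly the configuration that violates \eqref{0cont}. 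The paper argues the same way: its test function $\phi_{I,J,\omega}=F_{A,J,\omega}/A$ with $A=\sqrt{8\log M}$ has $\|\phi\|_\infty\le 1$, $\|\phi\|_2\lesssim 1/\sqrt{\log M}$, and $\sup_{n\in I}|S_n(\phi)|\ge c'\delta$ on measure $\ge c/3$. Your closing paragraph senses a difficulty, but the remedy is not a different randomisation; it is the correct $L^\infty$ Banach principle, which the paper flags explicitly as the starting point of the proof.
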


  A starting point of the proof  is a version (see \cite[(9)]{B}) of the Banach principle for $L^\infty(\m)$, namely the fact that the convergence property $(\mathcal{
C}_\infty )$ implies that
\begin{equation}\label{0cont} \sup_{\|f\|_{\infty, \m}\le 1, \|f\|_{2, \m}\le \e}\int_X \frac{S^*f}{1+S^*f} \, \dd \m \ \to \ 0, \qq \quad {\rm as} \quad \e
\to 0.
\end{equation}   
  This result was   established few after by Bellow and Jones in \cite{BJ}. The proof is however lenghty and indirect. It is possible to provide a direct and short proof,  similar
to the one of the standard Banach principle,  see  \cite[Theorem 5.1.5]{We}. 

\vskip 2 pt   Note that the integrability of  
$S^*f$, which is required in  the proof of Theorem \ref{t2}   in \cite{B},  is not ensured by the assumption  made in Theorem \ref{t2}.
 This is for instance guaranteed
when $S_n$ are $L^2(\m)$-$L^\infty(\m)$ contractions, which is the case of all applications given in \cite{B}. Moreover, Bourgain's proof runs with no modification using \eqref{0cont} at the conclusion.
 
 \vskip 2 pt
 
 Given a separable Hilbert space $H$, recall that   the
 canonical   Gaussian (also called isonormal) process $Z=\{Z_h,h\in H\}$ on $H $  is the centered Gaussian   process  with covariance function
 $$
 \Gamma(h,h')=\langle h,h'\rangle,  \qq \qq  h,h'\in H.
 $$
 Let $\{h_n, n\ge 1\}$ be a countable orthonormal basis  of $H$.   Let also $ 
\{g_n,n\ge 1\}$ be a    sequence
  of i.i.d.\  $\mathcal{ N}(0,1)$
distributed random variables on a basic probability space $(\O,
\A,\P)$. Then $Z$ can be defined as follows: for any $h\in H$,
$$
Z_h = \sum_{n=1}^\infty g_n \langle h,h_n\rangle.
 $$
  A subset $A$  of $H$ is a GB set (for Gaussian bounded)  if
the restriction of $Z$ on $A$ possesses a version which is sample
bounded.  Further,  $A$ is a GC set  (for Gaussian continuous)  if the restriction of $Z$ on $A$ possesses a version which is sample $\|\cdot\|$-continuous. These notions were   introduced in Dudley \cite{Du1}.

A countable  subset $A$ of $H$  is a  GB set   if $\E  \sup_{h\in A}\left|Z(h)\right| <\infty$, or   equivalently $\E  \sup_{h\in A} Z(h)  <\infty$, since as is well-known,
  $$
\E   \sup_{h\in A} Z(h)  \le  \E  \sup_{h\in A}\left|Z(h)\right| \le
2\E     \sup_{h\in A} Z(h)  + \inf_{h_0\in A}\E  |Z(h_0)| .
 $$ 

Under assumptions of Theorem \ref{t1},  Bourgain has also  shown    that the sets $C_f$ are  GB sets. Some remarks are in order. It is
not necessary to assume  that $S_n$ are $L^2(\m)$-contractions. Moreover the conclusion   remains true under a weaker condition than
$(\mathcal{ C}_p)$.    Theorem \ref{t1} can be reformulated as follows.
  \begin{theorem} \label{t3}
   Let $S_n\colon L^2(\mu)\to L^2(\mu)$, $n\ge 1$  be  continuous operators  satisfying assumption  {\rm (C)}.
     Assume  that for some $2\le p<\infty$,  $$
 \m \big\{  \sup_{n\ge 1}|S_n(f)| <\infty \big\}=1,  \qq \qq \text{for all $f\in L^p(\mu)$}   .   \leqno(\mathcal{ B}_p)
 $$
 Then  for any $f\in L^2(\mu)$,  the sets  $C_f$  are   GB sets   of $L^2(\mu)$. 
Further there exists a  numerical constant
$C_1$ 
 and a  constant $C_2$ 
 such that for any $f\in L^2(\mu)$,
  \begin{eqnarray*} C_1\sup_{\e >0} \varepsilon\sqrt{  \log
N_f(\varepsilon)}
  \le  \E \  \sup_{n\ge 1}Z(S_n(f))
\le C_2\|f\|_{2, \m}. 
\end{eqnarray*}  
\end{theorem}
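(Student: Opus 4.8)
\emph{Strategy.} The two estimates will be obtained by entirely different means. The left-hand one is just Sudakov's minoration: the canonical metric of the Gaussian process $\{Z(S_n(f))\}_{n\ge1}$ equals $\big(\E|Z(S_n(f))-Z(S_m(f))|^{2}\big)^{1/2}=\|S_n(f)-S_m(f)\|_{2,\mu}$, so its covering numbers are exactly the $N_f(\e)$, and Sudakov's lemma (Section~\ref{s4}) yields an absolute constant $C_1$ with $C_1\sup_{\e>0}\e\sqrt{\log N_f(\e)}\le\E\sup_{n\ge1}Z(S_n(f))$, whether or not the right side is finite. Hence the real content is the upper bound $\E\sup_nZ(S_n(f))\le C_2\|f\|_{2,\mu}$, which already forces $C_f$ to be a GB set. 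I would prove it first for $f$ in the dense subspace $L^\infty(\mu)$ of $L^2(\mu)$, and then transfer it to arbitrary $f\in L^2(\mu)$ via lower semicontinuity of $f\mapsto\E\sup_n|Z(S_n(f))|$ for $L^2(\mu)$-convergence: if $f_k\to f$ in $L^2$ then $S_n(f_k)\to S_n(f)$ in $L^2$ by continuity of $S_n$, so $Z(S_n(f_k))\to Z(S_n(f))$ in probability, and Fatou along an a.s.\ convergent subsequence gives $\E\sup_{n\le N}|Z(S_n(f))|\le\liminf_k\E\sup_n|Z(S_n(f_k))|$ for every $N$, whence the claim after letting $N\to\infty$.

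\emph{The maximal inequality and the Gaussian averages.} Since each $S_n$ is continuous from $L^p(\mu)$ into $L^2(\mu)\hookrightarrow L^0(\mu)$, the variant of the Banach principle of Section~\ref{s4} converts $(\mathcal B_p)$ into a maximal inequality: there is a non-increasing $C\colon\R^+\to\R^+$ with $C(\alpha)\to0$ such that $\mu\{S^*h>\alpha\|h\|_{p,\mu}\}\le C(\alpha)$ for all $h\in L^p(\mu)$. Now fix $f\in L^\infty(\mu)$ and form $F_{J,f}=\tfrac1{\sqrt J}\sum_{j\le J}g_jT_jf$ as in \eqref{fj0}. By the description of positive $L^1$-isometries fixing $1$ recalled in Section~\ref{s4}, each $T_j$ is a composition operator; in particular $T_j(uv)=(T_ju)(T_jv)$, $(T_jf)^2=T_j(f^2)$, and for fixed $x$ the variable $F_{J,f}(x)$ is centered Gaussian with variance $v_J(x)=\tfrac1J\sum_{j\le J}T_j(f^2)(x)$. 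Applying {\rm (C)} to $f^2\in L^1(\mu)$ gives $v_J\to\|f\|_{2,\mu}^2$ in $L^1(\mu)$; combined with $0\le v_J\le\|f\|_\infty^2$ this yields, by dominated convergence, $\E\|F_{J,f}\|_{p,\mu}^p=\E(|g|^p)\int_Xv_J^{p/2}\,d\mu\longrightarrow\E(|g|^p)\,\|f\|_{2,\mu}^p$. Thus, for $J$ large, Markov's inequality produces a number $L$ (depending only on $p$ and on the absolute constant $c_0$ fixed below) with $\P\{\|F_{J,f}\|_{p,\mu}>L\|f\|_{2,\mu}\}\le c_0/2$; on the complementary event the maximal inequality gives $\mu\{S^*(F_{J,f})>\lambda L\|f\|_{2,\mu}\}\le C(\lambda)$ for every $\lambda>0$, so integrating over $\O$,
\[\int_\O\mu\{S^*(F_{J,f})>\lambda L\|f\|_{2,\mu}\}\,d\P\ \le\ C(\lambda)+\tfrac{c_0}{2}\qquad(\lambda>0,\ J\ \text{large}).\]

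\emph{Comparison with the isonormal process, and conclusion.} By the commutation in {\rm (C)}, $S_n(F_{J,f})(x)=\tfrac1{\sqrt J}\sum_{j\le J}g_jT_j(S_nf)(x)$, which for fixed $x$ is a centered Gaussian vector in $\O$ with covariances $\tfrac1J\sum_{j\le J}T_j\big((S_nf)(S_mf)\big)(x)$; by {\rm (C)} applied to $(S_nf)(S_mf)\in L^1(\mu)$ these converge in $L^1(d\mu)$ to $\langle S_nf,S_mf\rangle$. Fix $N$. Along a subsequence of $J$'s the whole $N\times N$ covariance matrix converges, for $\mu$-a.e.\ $x$, to that of $(Z(S_1f),\dots,Z(S_Nf))$, and since the map $\Sigma\mapsto\E\max_{n\le N}|(\Sigma^{1/2}\gamma)_n|$ (with $\gamma$ standard Gaussian) is continuous on positive semidefinite matrices, $m_{N,J}(x):=\int_\O\max_{n\le N}|S_n(F_{J,f})(x)|\,d\P\longrightarrow\mathcal M_N:=\E\max_{n\le N}|Z(S_nf)|$ for a.e.\ $x$. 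On the other hand, the Paley--Zygmund inequality together with the comparison $\E M^2\le(1+\tfrac{\pi}{2})(\E M)^2$ for $M=\max_{n\le N}|G_n|$ of any centered Gaussian family $(G_n)$ (Section~\ref{s4}) gives an absolute $c_0>0$ with $\P\{\max_{n\le N}|S_n(F_{J,f})(x)|>c_0m_{N,J}(x)\}\ge c_0$ for all $x$, whence $\int_\O\mu\{S^*(F_{J,f})>t\}\,d\P\ge c_0\,\mu\{c_0m_{N,J}>t\}$ by Fubini. Taking $t=\lambda L\|f\|_{2,\mu}$ and combining with the displayed bound, $c_0\,\mu\{c_0m_{N,J}>\lambda L\|f\|_{2,\mu}\}\le C(\lambda)+\tfrac{c_0}{2}$; picking $\lambda_0$ with $C(\lambda_0)<\tfrac{c_0}{2}$ makes $\mu\{c_0m_{N,J}>\lambda_0L\|f\|_{2,\mu}\}<1$ for all large $J$, and letting $J\to\infty$ (Fatou on the indicator) forces $c_0\mathcal M_N\le\lambda_0L\|f\|_{2,\mu}$, i.e.\ $\mathcal M_N\le C_2\|f\|_{2,\mu}$ with $C_2:=\lambda_0L/c_0$ independent of $N$ and of $f$. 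Letting $N\to\infty$, $\E\sup_{n\ge1}|Z(S_nf)|=\lim_N\mathcal M_N\le C_2\|f\|_{2,\mu}$ for every $f\in L^\infty(\mu)$; the first paragraph then extends this to all $f\in L^2(\mu)$ and shows each $C_f$ is a GB set.

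\emph{Main obstacle.} The delicate point is to get $\|f\|_{2,\mu}$ rather than $\|f\|_{p,\mu}$ on the right-hand side, since the available maximal inequality is only an $L^p$-estimate. This is precisely what the ergodic averaging in {\rm (C)} buys: for large $J$ the variance profile $v_J$ of $F_{J,f}$ is $L^1$-close to the constant $\|f\|_{2,\mu}^2$, so for bounded $f$ the random average $F_{J,f}$ genuinely lives at the $L^2$-scale of $f$, and the $L^p$-maximal inequality applied to $F_{J,f}$ then operates at that scale. Two secondary points need care: the validity of the Banach-principle variant for mere almost-everywhere \emph{finiteness} of $S^*$, and the identification of the $T_j$ as composition operators, which is what makes the identities $(T_jf)^2=T_j(f^2)$ and the $L^1$-convergence of the empirical covariances available.
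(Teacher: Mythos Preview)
Your proof is correct and shares the paper's overall architecture: reduce to $f\in L^\infty(\mu)$; use {\rm (C)} together with $(T_jf)^2=T_j(f^2)$ to show that $\E\|F_{J,f}\|_{p,\mu}$ is asymptotically controlled by $\|f\|_{2,\mu}$; feed $F_{J,f}$ into the Banach-principle maximal inequality; compare the Gaussian family $\{S_n(F_{J,f})(x)\}_n$ with $\{Z(S_nf)\}_n$; and finish with Sudakov for the left inequality and an $L^2$-approximation for general $f$. The difference is in the comparison step. The paper, through Lemma~\ref{a2}\,(ii), isolates a set of $x$'s on which the \emph{increments} $\|S_n(F_{J,f})(x)-S_m(F_{J,f})(x)\|_{2,\P}$ lie within a fixed factor of $\|S_n(f)-S_m(f)\|_{2,\mu}$, applies Slepian's lemma there, and then uses the Fernique-type estimate~\eqref{fer} to convert the Banach-principle probability bound into an expectation bound on that set. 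You bypass Slepian entirely: you pass to a subsequence along which the full $N\times N$ covariance matrix of $\{S_n(F_{J,f})(x)\}_{n\le N}$ converges a.e.\ to that of $\{Z(S_nf)\}_{n\le N}$, use continuity of $\Sigma\mapsto\E\max_n|(\Sigma^{1/2}\gamma)_n|$ to get $m_{N,J}(x)\to\mathcal M_N$, invoke the Gaussian lower-tail bound (your Paley--Zygmund step is exactly Lemma~\ref{a4}\,(c)), and close with a Fatou argument on the level sets. Both routes are sound; the paper's is slightly more economical since only the increments have to be controlled, while yours makes the limit identification explicit and trades Slepian for elementary continuity of finite-dimensional Gaussian functionals.
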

 
  The use of   
   the   fact
  that if  $N(X)$ is  a Gaussian semi-norm, then  \begin{eqnarray}\label{fer} \P\{N(X)\le s\}>0 & & \Rightarrow 
   \qq \E N(X)\le \frac{4s}{\P\{N(X)\le s\}},
\end{eqnarray} 
slightly simplifies the proof, which otherwise is very similar (\cite{We}). Estimate \eqref{fer} will be frquently used in the sequel.
\begin{remark}\label{GC} One can naturally wonder whether property $(\mathcal{ C}_p)$ analogously implies that the sets $C_f$ are GC sets. 
This question was investigated in \cite[\S\,5.2.2]{We1}, where in Theorem 5.2.4 it is shown that the answer is positive when $X=\T$ and $S_n$ are commuting
with rotations.  
\end{remark}   
   \vskip 7 pt

 Note before continuing that  when $\int_X   S^*f \, \dd \m$ is finite,  no explicit  link  with
  $$\E  \sup_{n\ge 1}Z(S_n(f))$$ 
 can be drawn from Theorem \ref{t3}. In Theorem \ref{t5} below, this is established. A general  inequality  valid   for arbitrary partial maxima,  can be directly indeed derived from  condition {\rm (C)}  only.  Before, we add further comments. First, say a few words on the way the commutation condition $(C  )$  links 
   $Z$ and $S$. This  explains easily. 
Let $f\in L^2(\mu)$ and let  $I$ be a  finite  set of integers.  Then one  derives from $(C  )$, that there exists  an  index
$\mathcal{ J}$  such that the   two-sided inequalities
$$
    \frac{1}{2}  \| S_n(f)-S_m(f) \|_{2,\mu}\le    \|  S_n(F_{J,f})-S_m(F_{J,f})(x) \|_{2,\P}
 \le 2  \| S_n(f)-S_m(f) \|_{2,\mu} ,  
 $$ 
hold  true for all  $n,m\in I$ and all $J\in \mathcal{ J}$,   and for      all $x$ in a measurable set of positive measure.
   See Lemma \ref{a2}.     Theorem \ref{t1} is  obtained as a straightforward application of the Banach principle, and Slepian's inequality combined with
Sudakov's minoration (Lemma
\ref{a4}).  
    \vskip 3 pt
  Bourgain essentially applied Theorem \ref{t2}, and this in the case  $X=\T$, and $T_j$ are translation or dilation operators. In either case, condition
\eqref{Ctj} is obviously satisfied. The counter-examples are built on functions of the type  
$$ f=\frac{1}{\sqrt{\#(F)}}\sum_{ n\in F} e_n,\qq\qq\qquad (e_n(x) =e^{2i\pi nx})$$ 
where $F$ are   specific   arithmetic sets. These elements, as well as all $T_jf$, $j\ge 1$, not only belong to $L^p(\m)$ but  also  to many more specific spaces. So that for Banach spaces
 $B$ such that $B\subset L^2(\m)$, a requirement on $f\in B$ like 
\begin{equation}\label{tjr}  T_jf \in B, \qq\qq   \forall j\ge 1,
\end{equation}
is frequently non void.  Call $\mathcal R(B)$ the set of these elements.  Then $F_{J,f}\in  B$ whenever $f\in \mathcal R(B)$. If  $B=L^p(\m)$ for instance, then by Corollary \ref{c0} and Lemma \ref{a1}, $\mathcal R(B)=B$.  
\begin{theorem}\label{t5}
 Let $S$ be
satisfying  condition {\rm (C)}.   Let  additionally
$I$ be a  finite set of integers and  $0<\e<1$. 
 \vskip 2 pt \noi {\rm (i)}     
Let   $B\subset L^2(\m)$ be a Banach space with norm $\|.\|_{B}$. Let  $f\in \mathcal R(B)$. Then there exists a partial index
$\mathcal J$ such that  for any $J\in \mathcal J$, any positive increasing convex function $G\colon {\mathbb{R}}^+\to {\mathbb{R}}^+$ 
 $$
 \sqrt{1-\e}\  \E \sup_{n\in I} Z(S_n(f)) \le    \E    \|F_{J,f}\|_{B}  \sup_{
   \|h\|_{B}\le 1  }\int
\sup_{n\in I} |S_n(h)| \ d\mu .$$
And 
\begin{align*} \qq     \E    G\Big( \sqrt{1-\e}  \sup_{n,m\in I}
\big|Z(S_n(f))-Z(S_m(f))\big|\Big)\qq\qq\qq\qq \qq\qq\qq\qq 
 \cr      \le     \E    \|F_{J,f}\|_{B}  \sup_{
   \|h\|_{B}\le 1  }\  \E \int_{X} G\big(\sup_{n,m\in I} \big|(S_n-S_m)(h) \big|\big)
d\mu.   \end{align*}

\vskip 2 pt \noi {\rm (ii)} In particular, for any $f\in L^p(\m)$ with $2\le p<\infty$,
 \begin{eqnarray*}
    \sup_{\|f\|_{2,\mu}\le 1}\E \sup_{n\in I} Z(S_n(f)) &\le  &  C_p     \sup_{
   \|h\|_{p,\mu}\le 1  }\int
\sup_{n\in I} |S_n(h)| \ d\mu,  
\end{eqnarray*} 
where $C_p=\|g\|_p/\|g\|_2$, recalling the notation used.
Further 
 \begin{align*} \qq  \sup_{\|f\|_{2,\mu}\le 1}   \E    G\Big(   \sup_{n,m\in I}
\big|Z(S_n(f))-Z(S_m(f))\big|\Big)\qq\qq\qq\qq \qq\qq 
 \cr      \le    C_p \sup_{
   \|h\|_{p,\m}\le 1  }\  \E \int_{X} G\big(\sup_{n,m\in I} \big|(S_n-S_m)(h) \big|\big)
d\mu.   \end{align*}
\end{theorem}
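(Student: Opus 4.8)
The plan is to deduce (ii) from (i) applied with $B=L^p(\m)$ — for which $\mathcal R(B)=B$ by Corollary \ref{c0} and Lemma \ref{a1} — together with an explicit bound on $\E\|F_{J,f}\|_{p,\m}$; so the heart of the matter is (i). For (i) the strategy is: exploit condition $(C)$ to replace the canonical Gaussian vector $\{Z(S_n(f))\}_{n\in I}$ by the \emph{conditionally} Gaussian vector $\{S_n(F_{J,f})(\cdot)(x)\}_{n\in I}$ (randomness over the $g_j$'s) for $x$ ranging over a measurable set of measure as close to $1$ as we like; compare the two by Gaussian comparison inequalities; and then integrate in $x$ and renormalize by the $B$-norm of $F_{J,f}$.

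\emph{The conditional Gaussian structure.} Fix $f\in\mathcal R(B)$ and $0<\e<1$. For a.e.\ $\omega$ the function $F_{J,f}(\omega,\cdot)=\frac1{\sqrt J}\sum_{j\le J}g_j(\omega)T_jf$ is a finite combination of the $T_jf\in B$, hence lies in $B$; and by the commutation in $(C)$, $S_n(F_{J,f})(\cdot)(x)=\frac1{\sqrt J}\sum_{j\le J}g_j\,T_j(S_nf)(x)$. So, for fixed $x$, $(S_n(F_{J,f})(\cdot)(x))_{n\in I}$ is a centred Gaussian vector in the variables $(g_j)$ whose squared increments equal $\frac1J\sum_{j\le J}\big|T_j((S_n-S_m)f)(x)\big|^2=\frac1J\sum_{j\le J}T_j\big(|(S_n-S_m)f|^2\big)(x)$, the last equality by the $L^r$-isometry and multiplicativity properties of the $T_j$ (Lemma \ref{a1}, Corollary \ref{c0}). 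By the averaging property of $(C)$ these converge in $L^1(\m)$, hence along a subsequence $\m$-a.e., to $\|(S_n-S_m)f\|_{2,\m}^2$; since $I$ is finite a single sub-index $\mathcal J$ works for all pairs $n,m\in I$, and after Egorov's theorem and passing to the tail of $\mathcal J$ one obtains, for every $J\in\mathcal J$, a measurable set $E=E_J$ with $\m(E_J)\ge 1-\d$ on which $\big\|S_n(F_{J,f})(\cdot)(x)-S_m(F_{J,f})(\cdot)(x)\big\|_{2,\P}^2\ge (1-\e)\|(S_n-S_m)f\|_{2,\m}^2$ for all $n,m\in I$; this is the one-sided form of Lemma \ref{a2}.

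\emph{Comparison and normalization.} Fix $J\in\mathcal J$ and $x\in E_J$. By the previous step the centred Gaussian vectors $(\sqrt{1-\e}\,Z(S_nf))_{n\in I}$ and $(S_n(F_{J,f})(\cdot)(x))_{n\in I}$ have the increments of the first dominated by those of the second, so by the Gaussian comparison inequalities (Section \ref{s4}; cf.\ Lemma \ref{a4}, Slepian/Sudakov--Fernique) one gets $\sqrt{1-\e}\,\E\sup_{n\in I}Z(S_nf)\le \E\big[\sup_{n\in I}S_n(F_{J,f})(\cdot)(x)\big]$ and, for any convex increasing $G$ (applied to the diameter, using $\sup_{n,m}(\cdot-\cdot)=\sup_{n,m}|\cdot-\cdot|$ on a finite symmetric index), $\E\,G\big(\sqrt{1-\e}\sup_{n,m\in I}|Z(S_nf)-Z(S_mf)|\big)\le \E\,G\big(\sup_{n,m\in I}|(S_n-S_m)(F_{J,f})(\cdot)(x)|\big)$, the $\E$ on the right being over $\omega$ with $x$ fixed. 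Since the left-hand sides do not depend on $x$, I would then integrate over $E_J$, divide by $\m(E_J)$ (absorbing the harmless factor $\m(E_J)^{-1}\to1$ into the slack, i.e.\ replacing $1-\e$ by a slightly larger value at the outset), use Fubini, and for each $\omega$ factor $\|F_{J,f}(\omega)\|_B$ out of $\sup_n|S_n(\cdot)(x)|$ by linearity and the normalization $h_\omega:=F_{J,f}(\omega)/\|F_{J,f}(\omega)\|_B$ ($\|h_\omega\|_B=1$); taking expectation in $\omega$ then yields both right-hand sides in (i).

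\emph{From (i) to (ii), and the main obstacle.} For (ii), take $B=L^p(\m)$; conditionally on $x$, $F_{J,f}(\cdot)(x)$ is centred Gaussian with variance $\sigma_J(x)^2=\frac1J\sum_{j\le J}T_j(|f|^2)(x)$, so $\E\|F_{J,f}\|_{p,\m}\le(\E\|F_{J,f}\|_{p,\m}^p)^{1/p}=\|g\|_p\big(\int_X\sigma_J^p\,d\m\big)^{1/p}$. Since $\sigma_J^p\le\frac1J\sum_{j\le J}T_j(|f|^p)$ (Jensen together with the multiplicativity of the $T_j$) and the right-hand side converges in $L^1(\m)$ — hence is uniformly integrable — while $\sigma_J^p\to\|f\|_{2,\m}^p$ $\m$-a.e.\ along a sub-index, one gets $\int_X\sigma_J^p\,d\m\to\|f\|_{2,\m}^p$, so $\E\|F_{J,f}\|_{p,\m}\le(1+\e)\|g\|_p\|f\|_{2,\m}$ for $J$ large in $\mathcal J$. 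Plugging this into (i), restricting to $\|f\|_{2,\m}\le1$, letting $\e\to0$ (the left-hand sides do not depend on $\e$) and recalling $\|g\|_2=1$, so that $C_p=\|g\|_p/\|g\|_2=\|g\|_p$, gives (ii). The step I expect to be the main obstacle is the $G$-inequality of (i): in the first inequality the functional $h\mapsto\int_X\sup_n|S_nh|\,d\m$ is positively homogeneous of degree one, so the random scalar $\|F_{J,f}(\omega)\|_B$ factors out and produces the factor $\E\|F_{J,f}\|_B$ painlessly, whereas $G$ is only convex, so controlling $G$ of the rescaled diameter $\|F_{J,f}(\omega)\|_B\cdot\sup_{n,m}|(S_n-S_m)(h_\omega)(x)|$ against the factorised right-hand side — using the convexity and monotonicity of $G$ and the compensating factor $\E\|F_{J,f}\|_B$ — is the genuinely delicate point; a secondary nuisance is the bookkeeping turning the ``set of measure $\ge1-\d$'' coming from $(C)$ into the clean constant $\sqrt{1-\e}$, and, for (ii), the uniform-integrability argument bounding $\int_X\sigma_J^p\,d\m$.
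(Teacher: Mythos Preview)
Your approach is essentially the paper's, and for the first inequality in (i) you are in fact cleaner. The paper also invokes Lemma \ref{a2} to obtain $(1-\e)\,\E\sup_{n\in I}Z(S_nf)\le \E\int_X\sup_{n\in I}S_n(F_{J,f})\,d\mu$, but then bounds the right-hand side by a layer-cake decomposition over the levels $u_0=0$, $u_k=\e(1+\e)^{k-1}$ of $\|F_{J,f}\|_B$, arriving at $L\big(\e+(1+\e)\E\|F_{J,f}\|_B\big)$ and letting $\e\to0$. Your pointwise normalization $h_\omega=F_{J,f}(\omega)/\|F_{J,f}(\omega)\|_B$ achieves the same bound $L\cdot\E\|F_{J,f}\|_B$ in one line by homogeneity, and avoids the auxiliary summation entirely. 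For (ii), the paper simply recalls the estimate $\E\|F_{J,f}\|_{p,\mu}\le(1+\e)C_p\|f\|_{2,\mu}$ established earlier (via dominated convergence for $f\in L^\infty$, then approximation); your uniform-integrability argument via $\sigma_J^p\le \frac1J\sum_j T_j|f|^p$ is a more self-contained route to the same endpoint.

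Your instinct about the $G$-inequality is exactly right: this is the one place where the homogeneity trick does not apply, and the paper itself handles it only with the word ``Similarly'', without spelling out how the random scalar $\|F_{J,f}(\omega)\|_B$ interacts with the nonlinear $G$. Neither your direct normalization nor the paper's layer-cake decomposition straightforwardly yields the factor $\E\|F_{J,f}\|_B$ in front of $\sup_{\|h\|_B\le1}\int G(\cdot)\,d\mu$ for a general convex increasing $G$, so you have correctly located the genuinely delicate (and, in the paper, under-argued) point; your bookkeeping worries about $\mu(E_J)$ versus $\sqrt{1-\e}$ are by contrast harmless, as the paper absorbs them exactly the way you describe.
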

 \vskip 3 pt  
We have the following criterion providing a general  form of Theorem
\ref{t3}.
\begin{theorem}\label{gen}
Let $S$ be satisfying  assumption {\rm (C)}.  Let $B\subset L^2(\m)$ be a Banach space with norm $\|.\|_{B}$. Assume that the
following property is fulfilled:
$$
\m \big\{  \sup_{n\ge 1}|S_n(f)| <\infty \big\}=1,  \qq  \quad \forall f\in B .
$$
Then  there exists a constant $K$ depending on $S$ and $B$ only such that
 \begin{eqnarray*} \E \sup_{n \ge 1} Z(S_n(f)) &\le &  K \,\limsup _{H\to \infty}  \E \| F_{H,f}\|_{B} ,   \qq  \quad \forall f\in \mathcal R(B) .  
\end{eqnarray*} 
 \end{theorem}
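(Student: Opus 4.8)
My plan is to run, for each \emph{finite} index set $I\subset\N$, the Gaussian comparison argument lying behind Theorems \ref{t5} and \ref{t3}, but fed with the Banach principle attached to the space $B$ rather than to $L^p$, and to let the approximating length $J\to\infty$ so that the prefactor $\E\|F_{J,f}\|_B$ is stabilised near $L:=\limsup_{H\to\infty}\E\|F_{H,f}\|_B$. We may assume $L<\infty$ (otherwise there is nothing to prove) and $L>0$ (otherwise $f=0$, since $\E\|F_{H,f}\|_B\gtrsim\|f\|_{2,\mu}$ for all $H$ by the $L^2$-isometry of the $T_j$, and then both sides vanish).

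First I would reduce to finite index sets: $C_f=\{S_n(f):n\ge1\}$ is countable and $Z$ restricted to it is a separable centered Gaussian process, so by monotone convergence $\E\sup_{n\ge1}Z(S_n(f))=\sup_I\E\sup_{n\in I}Z(S_n(f))$, the supremum over finite $I$; writing $M_I(f):=\E\sup_{n\in I}Z(S_n(f))$, it suffices to bound $M_I(f)\le KL$ uniformly in $I$. Next, the Banach principle for $B$: the $S_n$ are continuous in measure, $F_{J,f}(\omega)\in B$ for each $\omega$ (since $f\in\mathcal R(B)$ and $F_{J,f}$ is a finite combination of the $T_jf\in B$), and by hypothesis $\sup_n|S_n(h)|<\infty$ $\mu$-a.e.\ for every $h\in B$; hence the variant of the Banach principle from Section \ref{s4} provides a non-increasing $\Phi\colon\R^+\to\R^+$ with $\Phi(\alpha)\to0$ as $\alpha\to\infty$ and $\mu\{S^*h>\alpha\|h\|_B\}\le\Phi(\alpha)$ for all $h\in B$, $\alpha\ge0$. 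Fix once and for all an $A$ with $\Phi(A)$ small.

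Now fix $I$ finite and $\eps\in(0,1)$. By the commutation in {\rm (C)}, $S_n(F_{J,f})=\tfrac1{\sqrt J}\sum_{j\le J}g_j\,T_j(S_nf)$, so for fixed $x$ the process $\{S_n(F_{J,f})(x)\}_{n\in I}$ is centered Gaussian in $\omega$ with covariance $\tfrac1J\sum_{j\le J}T_j(S_nf)(x)T_j(S_mf)(x)$. Applying the mean ergodic property \eqref{Ctj}, together with the Lamperti-type identity $|T_j\phi|^2=T_j(|\phi|^2)$ for the positive unital $L^1$-isometries $T_j$ (Lemma \ref{a1}), to the finitely many functions $(S_nf)(S_mf)$ and $|S_nf|^2$ ($n,m\in I$), I would obtain a partial index $\mathcal J$ such that, for $J\in\mathcal J$ and all $x$ outside a $\mu$-set of measure $<\eps$: the two-sided metric estimate of Lemma \ref{a2} holds on $I$, and $\sigma_I(x):=\max_{n\in I}\bigl(\tfrac1J\sum_{j\le J}|T_j(S_nf)(x)|^2\bigr)^{1/2}\le\sqrt{1+\eps}\,\max_{n\in I}\|S_nf\|_{2,\mu}$. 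For such $x$, the first of these and Slepian--Sudakov--Fernique (Lemma \ref{a4}) give
$$\E_\omega S^*_I(F_{J,f})(x)\ \ge\ \E_\omega\sup_{n\in I}S_n(F_{J,f})(x)\ \ge\ \sqrt{1-\eps}\,M_I(f),\qquad S^*_I(F_{J,f})(x):=\sup_{n\in I}|S_n(F_{J,f})(x)|.$$
Since $\mathcal J$ is infinite and $L=\limsup_H\E\|F_{H,f}\|_B$, I may pick $J\in\mathcal J$ with $\E\|F_{J,f}\|_B\le L+\eps$.

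The crux is the matching \emph{upper} bound for $\E_\omega S^*_I(F_{J,f})(x)$ at such a point $x$, which has to be extracted from the weak-type information of the Banach principle alone --- it does \emph{not} in general yield a strong $L^1$-maximal inequality on the unit ball of $B$, so one cannot just quote the second factor in Theorem \ref{t5}(i) as finite. The point is that only the random elements $h=F_{J,f}(\omega)$ occur, and these carry extra integrability: as a function of the Gaussian weights $(g_j)_{j\le J}$, $\omega\mapsto S^*_I(F_{J,f})(x)$ is Lipschitz with constant $\sigma_I(x)$, so by the sharp Gaussian concentration inequality $\E_\omega S^*_I(F_{J,f})(x)\le{\rm med}_\omega\,S^*_I(F_{J,f})(x)+\tfrac{1}{\sqrt{2\pi}}\,\sigma_I(x)$; and from $\mu\otimes\P\{S^*_I(F_{J,f})>A\|F_{J,f}\|_B\}\le\Phi(A)$, Markov in $\omega$, and $\P\{\|F_{J,f}\|_B>4\E\|F_{J,f}\|_B\}\le\tfrac14$, one gets ${\rm med}_\omega\,S^*_I(F_{J,f})(x)\le 8A\,\E\|F_{J,f}\|_B$ for $x$ outside a further $\mu$-set of measure $O(\Phi(A))$. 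Choosing $x$ in the (positive-measure) intersection of the good sets and combining with the lower bound, using $\E\|F_{J,f}\|_B\le L+\eps$ and --- here I would use that the $S_n$ are uniformly bounded on $L^2(\mu)$, so $\sigma_I(x)\le\sqrt{1+\eps}\max_{n\in I}\|S_nf\|_{2,\mu}\le c\,\|f\|_{2,\mu}\le c'L$ (again by the $L^2$-tightness of the laws of the $F_{H,f}$) --- gives $\sqrt{1-\eps}\,M_I(f)\le 8A(L+\eps)+\tfrac{c'\sqrt{1+\eps}}{\sqrt{2\pi}}L$; a supremum over finite $I$ and then $\eps\to0$ yields $\E\sup_{n\ge1}Z(S_n(f))\le KL$ with $K$ depending only on $S$ and $B$. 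I expect this last step --- upgrading the weak-type Banach estimate to a genuine bound on $\E_\omega S^*_I(F_{J,f})(x)$ uniformly over all finite $I$, which forces one to exploit the Gaussian structure of the $F_{J,f}$ and to control $\max_{n\in I}\|S_nf\|_{2,\mu}$ independently of $I$ --- to be the main obstacle; the rest is the now-standard metric-comparison machinery.
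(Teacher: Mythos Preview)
Your overall architecture is right and matches the paper: reduce to finite $I$, use the Banach principle on $B$ applied to the random elements $F_{J,f}$, run the metric comparison of Lemma \ref{a2} together with Slepian to get the lower bound $\E_\omega\sup_{n\in I}S_n(F_{J,f})(x)\ge\sqrt{1-\e}\,M_I(f)$ on a large $\mu$-set, and let $J\to\infty$ along a suitable index. The gap is precisely where you yourself flag the ``main obstacle'': your upper bound for $\E_\omega S^*_I(F_{J,f})(x)$.

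You bound the median via the weak-type estimate and then add the Gaussian concentration term $c\,\sigma_I(x)$. To control $\sigma_I(x)\le\sqrt{1+\e}\max_{n\in I}\|S_nf\|_{2,\mu}$ uniformly in $I$ you invoke ``the $S_n$ are uniformly bounded on $L^2(\mu)$''. That hypothesis is \emph{not} available in Theorem \ref{gen}: only continuity of each $S_n$ and condition {\rm (C)} are assumed (compare the remark before Theorem \ref{t3} that the contraction assumption is unnecessary). Without it your bound blows up with $I$, and the self-improvement route you hint at (feeding $\max_n\|S_nf\|_2\lesssim\|S_{n_0}f\|_2+M_I(f)$ back into the inequality) only closes if the concentration coefficient is small enough, which you have no control over. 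The side claim $\|f\|_{2,\mu}\lesssim L$ also tacitly uses a continuous embedding $B\hookrightarrow L^2(\mu)$, which the statement does not postulate.

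The paper sidesteps all of this with one stroke: instead of ``median $+$ Lipschitz'', use the Fernique-type estimate \eqref{fer} for Gaussian semi-norms, namely $\P\{N\le s\}>0\Rightarrow \E N\le 4s/\P\{N\le s\}$. Lemma \ref{a3} already gives, for $x$ in a set of $\mu$-measure $\ge 1-\e$, that
\[
\P\big\{\sup_{n\ge1}|S_n(F_{J,f})(x)|\le C(\e)\,\E\|F_{J,f}\|_B\big\}\ \ge\ 1-\e,
\]
and \eqref{fer} turns this \emph{directly} into
\[
\E_\omega\sup_{n\ge1}|S_n(F_{J,f})(x)|\ \le\ \frac{4C(\e)}{1-\e}\,\E\|F_{J,f}\|_B,
\]
with no reference to $\sigma_I$ or to any operator norms. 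Intersecting with the Lemma \ref{a2} set and applying Slepian exactly as you do then yields $M_I(f)\le K(\e)\,\E\|F_{J,f}\|_B$ for $J$ in a suitable index, hence $\E\sup_{n\ge1}Z(S_nf)\le K\limsup_H\E\|F_{H,f}\|_B$. In short: replace your concentration step by \eqref{fer}, and the proof goes through under the stated hypotheses.
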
   

Let us derive a criterion  which has been recently applied in \cite{BW} to show the optimality of a famous theorem of Koskma.  Let $\{h_n, n\in \Z\}$ be a
countable orthonormal basis  of
$L^2(\m)$ and use the notation 
$f\sim
\sum_{n\in
\Z}  a_n(f)h_n
$, 
$
\sum_{ n\in \Z}   a^2_n(f)<\infty   $,  if $f\in L^2(\m)$.  
    Given    a sequence of positive reals $w=\{w_n, n\in \Z\}$ with  $w_n \ge 1$,  we recall that $L^2_w (\m)
$ is the sub-space of  $L^2 (\m) $ consisting of functions  such that
 $$ \sum_{ n\in \Z} w_n a^2_n(f)<\infty.  $$
This is  a  Hilbert space with scalar product defined by  $\langle
f,h\rangle= \sum_{ n\in \Z} w_n a_n(f)a_n(h)$, 
 and norm 
$$\|f\|_{2,w} =\big(\sum_{ n\in \Z} w_n a^2_n(f)\big)^{1/2}  .$$
The space $L^2(\m)$ corresponds to the case $w_n\equiv 1$.  And  $L^2_w(\m) $ trivially contains any $f$ such that $a_n(f)=0$ except for finitely many $n$.

\begin{corollary}\label{t4}
Let $S$ be satisfying  assumption {\rm (C)}.  Assume that the
following property is fulfilled:
$$
\m \big\{  \sup_{n\ge 1}|S_n(f)| <\infty \big\}=1,  \qq \qq \text{for all $f\in   L^2_w(\m) $} .
$$
Then  there exists a constant $K$ depending on $S$ and $w$ only such that

$$ \sup_{\e >0} \varepsilon\sqrt{  \log
N_f(\varepsilon)}   \le K \limsup_{J\to \infty} \E \|F_{J, f
}\|_{2,w} ,   \qq \qq \text{for all $f\in   \mathcal R( L^2_w(\m )) $}    .$$
 \end{corollary}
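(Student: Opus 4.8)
The plan is to derive Corollary \ref{t4} directly from Theorem \ref{gen} by specializing the Banach space $B$ to the Hilbert space $L^2_w(\m)$ and then converting the bound on $\E \sup_{n\ge 1} Z(S_n(f))$ into the metric entropy bound via Sudakov's minoration. First I would check that $L^2_w(\m)$ qualifies as a legitimate choice of $B$ in Theorem \ref{gen}: it is a Hilbert space (hence a Banach space) with $L^2_w(\m)\subset L^2(\m)$ since $w_n\ge 1$ forces $\sum_n a_n^2(f)\le \sum_n w_n a_n^2(f)<\infty$; and the hypothesis $\m\{\sup_{n\ge 1}|S_n(f)|<\infty\}=1$ for all $f\in L^2_w(\m)$ is exactly the assumption of the corollary. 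Applying Theorem \ref{gen} then yields a constant $K_0=K_0(S,w)$ with
$$
\E \sup_{n\ge 1} Z(S_n(f)) \le K_0 \,\limsup_{H\to\infty}\E\|F_{H,f}\|_{2,w},\qquad \forall f\in \mathcal R(L^2_w(\m)).
$$

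Next I would invoke Sudakov's minoration (Lemma \ref{a4}, the same ingredient cited for Theorem \ref{t1}): for the canonical Gaussian process $Z$ restricted to the countable set $C_f=\{S_n(f),n\ge 1\}\subset L^2(\m)$, one has the standard lower bound
$$
c\,\sup_{\e>0}\e\sqrt{\log N_f(\e)} \le \E\sup_{n\ge 1} Z(S_n(f)),
$$
with $c$ a universal constant, where $N_f(\e)$ is the covering number of $C_f$ by $L^2(\m)$-balls of radius $\e$ as defined in Theorem \ref{t1}. (A technical point: Sudakov's minoration in the form used here applies to finite subsets, so one first establishes the inequality for each finite truncation $\{S_n(f):1\le n\le N\}$ with its covering number, then passes to the supremum over $N$; since $N_f(\e)=\sup_N N_{\{S_1f,\dots,S_Nf\}}(\e)$ and $\E\sup_{n\le N}Z(S_nf)\uparrow \E\sup_{n\ge 1}Z(S_nf)$ by monotone convergence, the bound survives the limit.) Combining this with the display from Theorem \ref{gen} gives
$$
\sup_{\e>0}\e\sqrt{\log N_f(\e)} \le \frac{K_0}{c}\,\limsup_{J\to\infty}\E\|F_{J,f}\|_{2,w},\qquad \forall f\in\mathcal R(L^2_w(\m)),
$$
which is the assertion with $K=K_0/c$.

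The main obstacle, as I see it, is not conceptual but one of bookkeeping around the class $\mathcal R(L^2_w(\m))$ and the finite-versus-countable passage. One must be sure that for $f\in \mathcal R(L^2_w(\m))$ the random functions $F_{J,f}=\frac{1}{\sqrt J}\sum_{j\le J}g_j T_jf$ indeed lie in $L^2_w(\m)$ (so that $\E\|F_{J,f}\|_{2,w}$ makes sense and is what Theorem \ref{gen} controls) — but this is precisely the defining property \eqref{tjr} of $\mathcal R(B)$ with $B=L^2_w(\m)$, together with the observation that $L^2_w(\m)$ is a linear subspace, so no extra work is required. One should also note that $C_f\subset L^2(\m)$ (not merely $L^2_w(\m)$), since the $S_n$ map $L^2(\m)\to L^2(\m)$ and $L^2_w(\m)\hookrightarrow L^2(\m)$ continuously; hence $N_f(\e)$ refers to covering in the $L^2(\m)$ metric, exactly as in Theorem \ref{t1}, and the Gaussian process $Z$ appearing is the isonormal process on $L^2(\m)$ — consistent with the covariance $\langle S_n f, S_m f\rangle_{L^2(\m)}$ needed for Sudakov's bound. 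With these compatibility checks in place the proof is a two-line composition of Theorem \ref{gen} and Lemma \ref{a4}.
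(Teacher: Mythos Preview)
Your proposal is correct and follows exactly the route the paper intends: Corollary \ref{t4} is stated as a direct consequence of Theorem \ref{gen} with $B=L^2_w(\m)$, together with Sudakov's minoration (Lemma \ref{a4}(b), also the left inequality in Theorem \ref{t3}). The compatibility checks you spell out (that $L^2_w(\m)\subset L^2(\m)$ since $w_n\ge 1$, that $F_{J,f}\in L^2_w(\m)$ for $f\in\mathcal R(L^2_w(\m))$, and the finite-to-countable passage) are precisely what the paper leaves implicit.
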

\begin{remark} Let $X=\T$, $\m$ the normalized Lebesgue measure and let $T_j$ be dilation operators, $T_jf(x)=f(jx)$. Then any finite trigonometric sum belongs to $ \mathcal R( L^2_w(\m ))$.
\end{remark} 
  We refer to \cite[Chapter 6]{We} for a   study  of the link between the partial maximum operators ($I$
being a   set integers).  
\begin{equation}\label{0cont1}   \sup_{\|h\|_{\infty, \m}\le 1\atop  \|h\|_{2, \m}\le \e}\int_X \sup_{n\in I} |S_n(h)|\dd \m 
\qq {\rm and} \qq  \sup_{    \|f\|_{2,\m}\le 1}\E  \sup_{n\in I }Z(S_n(f)).
 \end{equation}  
In the theorem below, we provide a quantitative link.
 \begin{theorem}\label{t7}Let $S_n$, $n\ge 1$,  be  $L^2(\m)$-$L^\infty(\m)$ continuous operators verifying condition ${\rm (C )}$. Let  $I$ be any set of integers with cardinality $M$. For any reals $A>0$, $R>0$, it is true that
 \begin{eqnarray*}
 \sup_{ \|f\|_{2,\mu}\le 1}  \E  \sup_{n\in I} Z(S_n(f)) &\le& 6\, 
\sqrt{  M}S_1(I)\,\exp\{- A^2 / 8\}+ A (
\sqrt 2  )S_2(I)\,   e^{- {R^2}/{4 } }    \cr &  & + A
\sup_{\|h\|_{\infty,\mu}\le 1\atop \|h\|_{2,\mu}\le R/A}\int_{X }  \sup_{n\in I}|S_n(h)| \ d\mu   ,
\end{eqnarray*}
where  $ S_1(I) = \max_{n\in I}  \|S_n\|_2 $, $S_2(I) =
  \max_{n\in I} \|S_n\|_\infty  $, and $$\|S_n\|_2= \sup_{\|f\|_2\le 1}  \|S_n(f)\|_2 \qq  \|S_n\|_\infty= \sup_{\|f\|_\infty \le 1}\|S_n(f)\|_\infty .$$
 \end{theorem}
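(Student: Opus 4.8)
The plan is to split the Gaussian maximum $\E\sup_{n\in I}Z(S_n(f))$ into three regimes and bound each separately, exactly along the lines of the three terms appearing on the right-hand side. First I would use the commutation condition ${\rm (C)}$ via Lemma \ref{a2}: for the finite set $I$ and the given $f$ with $\|f\|_{2,\mu}\le 1$, there is an index $\mathcal J$ and a measurable set $X_0=X_0(J)$ of positive $\mu$-measure such that, for all $J\in\mathcal J$ and $x\in X_0$, the laws of $\{S_n(F_{J,f})(x)\}_{n\in I}$ (in the $\P$-variable) are comparable to $\{S_n(f)\}_{n\in I}$ in the sense already quoted in the excerpt, so that $\sqrt{1-\e}\,\E\sup_{n\in I}Z(S_n(f))$ is controlled by $\E\sup_{n\in I}|S_n(F_{J,f})(x)|$ for such $x$. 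This reduces the problem to estimating the random averages $F_{J,f}$, whose $L^\infty$-norm is, however, not under control — only their $L^2$-norm is ($\|F_{J,f}\|_{2,\mu}\approx\|f\|_{2,\mu}\le 1$ with high $\P$-probability). That discrepancy is what forces the truncation argument.

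Next I would truncate: write $F_{J,f}=F'+F''$ where $F'=F_{J,f}\mathbf 1_{\{|F_{J,f}|\le A\}}$ and $F''=F_{J,f}\mathbf 1_{\{|F_{J,f}|> A\}}$ for the parameter $A$ (more precisely one truncates after normalizing so the relevant $L^2$-ball radius is $R/A$, which is where $R$ enters). For the bounded part $F'$ one has $\|F'\|_{\infty,\mu}\le A$ and, on the good event, $\|F'\|_{2,\mu}\le R/A$; applying the operators $S_n$, which are $L^2$--$L^\infty$ continuous, and using $\sup_{n\in I}|S_n(F')|\le A\sup_{\|h\|_{\infty,\mu}\le1,\ \|h\|_{2,\mu}\le R/A}\sup_{n\in I}|S_n(h)|$ after rescaling $h=F'/A$, one gets the third term $A\sup_{\|h\|_{\infty,\mu}\le1,\ \|h\|_{2,\mu}\le R/A}\int_X\sup_{n\in I}|S_n(h)|\,d\mu$. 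For the unbounded tail part $F''$, I would estimate $\|S_n(F'')\|_2\le\|S_n\|_2\|F''\|_2$ and $\|S_n(F'')\|_\infty\le\|S_n\|_\infty\|F''\|_\infty$-type bounds are unavailable since $F''$ is unbounded, so instead bound $\E\sup_{n\in I}|S_n(F'')|$ by $\sqrt M\,\max_{n\in I}(\E|S_n(F'')|^2)^{1/2}\le\sqrt M\,S_2(I)\cdot(\text{something})$ — here one uses that $F''$ is a truncation of a Gaussian-weighted sum, so Gaussian tail estimates give $\E\|F''\|_2^2$ or the appropriate moment decaying like $e^{-R^2/4}$ (the factor $\sqrt2$ and the exponent $R^2/4$ come from a standard Gaussian tail $\P(|g|>R)\le e^{-R^2/2}$ combined with a Cauchy–Schwarz split). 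The remaining contribution — the part of the Gaussian supremum over $I$ that is "large", i.e. exceeds $A$ times a typical value — is handled by the Gaussian concentration/tail bound for $\sup_{n\in I}Z(S_n(f))$: since each $Z(S_n(f))$ is Gaussian with variance $\le S_1(I)^2$, a union bound over the $M$ indices gives $\P(\sup_{n\in I}Z(S_n(f))>A\,S_1(I))\le M e^{-A^2/2}$ and integrating yields the first term $6\sqrt M\,S_1(I)\exp\{-A^2/8\}$ (the constant $6$ and exponent $A^2/8$ absorbing the $\sqrt{1-\e}\to1$ loss and the passage from tail to expectation).

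Finally I would assemble the three bounds, let $\e\downarrow0$ so $\sqrt{1-\e}\to1$, and take the supremum over $\|f\|_{2,\mu}\le1$, which is legitimate because all constants $S_1(I),S_2(I),M$ are independent of $f$ and the index $\mathcal J$ from Lemma \ref{a2} can be chosen so that the comparison holds uniformly (or one argues $f$ by $f$ and notes the final bound does not depend on $f$). The main obstacle I anticipate is the tail term for $F''$: one must control $\E\sup_{n\in I}|S_n(F''_{J,f})|$ using only the $L^2$--$L^2$ operator norms $\|S_n\|_2$ and the $L^2$--$L^\infty$ norms $\|S_n\|_\infty$ together with sharp Gaussian moment bounds on the truncated sum $F''_{J,f}$, and getting the constants to land exactly as $A\sqrt2\,S_2(I)e^{-R^2/4}$ requires being careful about which norm is paired with which moment of the Gaussian tail. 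The interplay of the two free parameters $A$ and $R$ — $A$ governing the Gaussian concentration of the supremum and the truncation level, $R$ governing the $L^2$-radius passed into the analytic hypothesis — is the delicate bookkeeping point, but conceptually everything reduces to Lemma \ref{a2} plus a Gaussian truncation, and no genuinely new idea beyond those in Theorems \ref{t3} and \ref{t5} is needed.
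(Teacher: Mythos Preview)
Your overall scaffold --- reduce to $F_{J,f}$ via Lemma \ref{a2} (equation \eqref{debut}), then truncate at level $A$ --- is correct and matches the paper. But you have swapped the roles of $S_1(I)$ and $S_2(I)$, and this is not a cosmetic slip: it breaks the argument. The unbounded tail $F''=F_{J,f}\mathbf 1_{\{|F_{J,f}|>A\}}$ is not in $L^\infty$, so you cannot feed it into the $L^\infty$ operator norm $S_2(I)$; the only estimate available is $\|S_n(F'')\|_{2,\mu}\le S_1(I)\|F''\|_{2,\mu}$, and then $\sqrt M\,S_1(I)\,\E\|F''\|_{2,\mu}$ together with the Mill's ratio bound (Lemma \ref{a4}(d)) yields the first term $6\sqrt M\,S_1(I)\,e^{-A^2/8}$. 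There is no separate ``Gaussian concentration of $\sup_n Z(S_n(f))$'' step --- once you pass to $\E\int\sup_n S_n(F_{J,f})\,d\mu$ via \eqref{debut}, all the work is on $F_{J,f}$.

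The piece you are missing is a \emph{second} split of the bounded part $F'=F_{A,J}$. One does not have $\|F'\|_{2,\mu}\le R/A$ on any ``good event'' for free; instead the paper splits the $\P$-integral according to $\{\|F_{A,J}\|_{2,\mu}>R\}$ versus $\{\|F_{A,J}\|_{2,\mu}\le R\}$ (equation \eqref{splitR}). On $\{\|F_{A,J}\|_{2,\mu}\le R\}$ you rescale $h=F_{A,J}/A$ and get the third term, as you said. On $\{\|F_{A,J}\|_{2,\mu}>R\}$, since now $\|F_{A,J}\|_{\infty,\mu}\le A$, one can use $\|\sup_n|S_n(F_{A,J})|\|_{\infty,\mu}\le A\,S_2(I)$ and then control $\P\{\|F_{A,J}\|_{2,\mu}>R\}$ by Chebyshev against the exponential moment $\E\exp\{\|F_{A,J}\|_{2,\mu}^2/(4\alpha)\}$, which is shown to be bounded (estimate \eqref{estexp}, using that $\frac1J\sum T_jf^2\to 1$ on a set $B$ of large measure). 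This is where $A\sqrt2\,S_2(I)\,e^{-R^2/4}$ comes from --- not from a tail of $F''$. There is also a residual integral over $B^c$ which vanishes as the auxiliary parameter $\alpha\downarrow 1$. So the decomposition is really into four pieces (tail $F^{A,J}$; bounded part on $B$ with large $\|\cdot\|_2$; bounded part on $B$ with small $\|\cdot\|_2$; bounded part on $B^c$), and the operator norms pair the opposite way from what you wrote.
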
  
\begin{remark} It is not complicate to derive from this bound Theorem \ref{t2}, for  $L^2(\m)$-$L^\infty(\m)$ contractions.

\end{remark} \vskip 4 pt Now consider the spaces $L^p(\m)$, $1<p< 2$.   A corresponding  entropy criterion can be also established. 
\begin{theorem}\label{t6}Let
$1<p\le  2$ with conjugate number $q$.  
Consider a sequence $S=\left\{S_n,n\ge 1\right\}$ of continuous
operators from    $L^p(\mu)$ to $L^p(\mu)$. Assume that condition ${\rm (C )}$  is satisfied.

 Further assume that for some real $0<r<p$, property  $(\mathcal{ B}_r)$ is satisfied. Then there exists a constant $C(r,p)<\infty$ depending
on $r$ and $p$ only, such that for any $f\in L^p(\mu)$,
$$  \sup_{\e >0} \e \big( \log  N_f^p(\e )\big)^{1/ q}
 \le C(r,p)\|f\|_{p, \m} ,
 $$ where $N_f^p(\e )$ is the minimal number of  open
$L^p$-balls of radius $\e  $, centered in $C_f$ and enough to cover
it. Further $ C(r,p) $  tends to infinity as
 $r$ tends to  $p$.
\end{theorem}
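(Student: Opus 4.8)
The plan is to mimic the Gaussian argument that underlies Theorems \ref{t1} and \ref{t3}, replacing the isonormal Gaussian process by the canonical $p$-stable process on $L^p(\mu)$ and replacing the Gaussian weights $g_j$ in $F_{J,f}$ by i.i.d.\ symmetric $p$-stable weights $\theta_j$; the companion role of ``Slepian $+$ Sudakov minoration'' will be played by the analogous comparison and minoration estimates for stable processes, which I will take from the auxiliary material of Section \ref{s4}. First I would set up the randomized averages
$$ F_{J,f}=\frac{1}{J^{1/p}}\sum_{j\le J}\theta_j\,T_jf, $$
and use condition {\rm (C)} (in the form of the two-sided $L^p$-comparison Lemma \ref{a2}, now stated for $p$-stable sums) to show that for a finite set $I$ of integers there is an index $\mathcal J$ so that on a set of positive $\mu$-measure the $L^p(\P)$-distances between the $S_n(F_{J,f})(x)$, $n\in I$, are comparable (up to fixed constants) to the $L^p(\mu)$-distances between the $S_n(f)$, for all $J\in\mathcal J$. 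Because $T_j$ are positive $L^1$-isometries fixing $1$, they are $L^p$-isometries (Lemma \ref{a1}/Corollary \ref{c0}), so $\|F_{J,f}\|_{p,\mu}$ stays of order $\|f\|_{p,\mu}$ in the appropriate randomized sense.

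Next I would invoke property $(\mathcal B_r)$ for some $r<p$ together with the Banach-principle variant of Section \ref{s4}: a.e.\ finiteness of $\sup_n|S_n(f)|$ on all of $L^r$, combined with continuity in measure, yields a weak maximal inequality, hence a uniform bound
$$ \sup_{\|h\|_{p,\mu}\le 1}\,\Big\|\,\sup_{n\in I}|S_n(h)|\,\Big\|_{r,\mu}\le K(r,p), $$
with $K(r,p)$ independent of $I$ (this is where the strict inequality $r<p$ is used, since one needs a little room below $p$ to pass from a weak-type $(r,r)$ statement to an $L^r$-bound of the partial maximal operator; the constant will blow up as $r\uparrow p$). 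Transferring via the randomized averages, one gets that the $p$-stable process $W=\{W_{S_n(f)}:n\in I\}$ indexed by $C_f$ has
$$ \E\sup_{n\in I}\big|W_{S_n(f)}-W_{S_m(f)}\big| \le C\,K(r,p)\,\|f\|_{p,\mu}, $$
uniformly in the finite set $I$, hence by letting $I$ exhaust $\N$ the whole set $C_f$ is ``stable-bounded''. This is the stable analogue of the GB property in Theorem \ref{t3}, and it is the technical heart of the argument.

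Finally I would read off the metric-entropy conclusion from a Sudakov-type minoration for $p$-stable processes: for the canonical $p$-stable process indexed by a subset of $L^p$, boundedness of $\E\sup$ forces
$$ \sup_{\e>0}\ \e\,\big(\log N^p_f(\e)\big)^{1/q}\le C_q\,\E\sup_{u\in C_f}W_u, $$
which combined with the previous bound gives $\sup_{\e>0}\e(\log N^p_f(\e))^{1/q}\le C(r,p)\|f\|_{p,\mu}$ with $C(r,p)\to\infty$ as $r\to p$. I expect the main obstacle to be the minoration step: Sudakov's inequality for stable processes is considerably more delicate than in the Gaussian case (the exponent $1/q$ rather than $1/2$ reflects the heavier tails), and one must be careful that the comparison of $L^p(\P)$-distances of stable linear combinations with the $L^p(\mu)$-distances on $C_f$ is exactly the metric for which the stable minoration applies — this is precisely the point where the less restrictive commutation assumption (only $T_j$ positive $L^1$-isometries with $T_j1=1$, commuting with $S$) must be shown to suffice, rather than full measure-preserving endomorphisms.
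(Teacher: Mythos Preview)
Your outline has the right skeleton---$p$-stable randomization, the metric comparison of Lemma \ref{a2}(i), and the Banach principle via $(\mathcal B_r)$---but two steps are off. First, minor points: symmetric $p$-stable variables with $p<2$ have infinite $p$-th moment, so there are no ``$L^p(\P)$-distances''; Lemma \ref{a2}(i) uses $\|\cdot\|_{r,\P}$ for $r<p$ (equivalently the scale parameter). And the Banach principle does not give the strong bound $\|\sup_{n\in I}|S_nh|\|_{r,\mu}\le K(r,p)\|h\|_{p,\mu}$ you claim; one only gets $\mu\{S^*h>\lambda\|h\|_r\}\le C(\lambda)\to 0$, and the paper works with this weak-type estimate directly, never upgrading it.

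The real gap is the minoration step. The paper does \emph{not} invoke a Sudakov inequality for $p$-stable processes. Instead it uses the representation $\theta_j\dist\eta_jg_j$ (positive $(p/2)$-stable times independent Gaussian) to view $\{S_n(F_{J,f})(x)\}_n$ as a process that is \emph{Gaussian conditionally on} $\omega'=(\eta_j)$. The Banach-principle probability bound then yields, via Fubini, that for a large $\P'$-set of $\omega'$ the conditional Gaussian sup has positive $\P''$-probability of being $\le C\|f\|_{p,\mu}$; at this point the Gaussian estimate \eqref{fer} (which has no stable analogue, and is precisely the reason for conditioning) gives $\E_{\P''}\sup_n|\cdots|\le C'\|f\|_{p,\mu}$. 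The remaining work is to compare the \emph{conditional Gaussian} distance $d_{J,\omega',x}(n,m)$ with the stable scale $d_{J,x}(n,m)$: an exponential Chebyshev argument, optimizing $t$ in the identity $\E_{\P'}e^{-t^2d_{J,\omega',x}^2/2}=e^{-|t|^pd_{J,x}^p}$, shows that with $\P'$-probability $\ge 1/2$ one has $d_{J,\omega',x}(n,m)\ge (\tau\log M)^{-(1/p-1/2)}d_{J,x}(n,m)$ simultaneously for all $n,m\in I$, where $M=\#I$. Combining with \emph{Gaussian} Sudakov (exponent $1/2$) yields the net exponent $1/2-(1/p-1/2)=1-1/p=1/q$. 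So the exponent $1/q$ is not imported from a stable minoration theorem; it emerges from Gaussian Sudakov minus the $(\log M)^{1/p-1/2}$ loss in the conditional-distance lower bound.
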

 The  proof given in \cite{We1} relies on 
     properties of $p$-stable processes; it is assumed  that $S$ commutes with an ergodic endomorphism of $(X,\mathcal{ A}, \mu)$, which in fact is unnecessary. The restriction   $p\neq 1$ is only used   at the very end of the proof, but is then crucially necessary. 
  \begin{remark} The pending question of a possible convergence criterion for the space $L^1(\m)$  is of course very interesting. But its true nature is unknown, since we are not operating in a (strictly) stationary context. In particular, $\|S_n(f)-S_m(f)\|_{p,\mu}$,  crucial  in \eqref{metr.comp}, does not even depend on $n-m$ only, in general. 
And we know  
 (see Talagrand \cite[\S\,8.1]{T}), that  a necessary condition for a $1$-stable process to be  sample bounded 
  rather expresses in terms of majorizing measures. This important concept is  however not relevant   in the present context because of its difficulty  of  application.    
 \end{remark}          
  As announced already, we have made the paper self-contained.    We provide proofs of these theorems in Section \ref{s5}. 
 

 \section{Auxiliary Results.}  \label{s4}
\subsection{$\boldsymbol{L^p}$-isometries.} We first   recall a classical result    of Lamperti \cite[Theorem 3.1]{L}.
 Let   $\m$ be   a $\s$-finite measure     on $(X,\A)$. Some basic properties of isometries of $L^p(\m)$  are used in what follows.    Recall that a regular set isomorphism of the measure space
$(X,\mathcal A, \m)$ is a mapping $\Theta$ of $\mathcal A$ into itself such that 
 \begin{eqnarray}\label{iso} {\rm (i)} && \Theta(A^c)=\Theta X\, \backslash \, \Theta A \cr
 {\rm (ii)}& & \Theta\big(\bigcup_{n=1}^\infty A_n\big)= \bigcup_{n=1}^\infty \Theta A_n\quad \text{for disjoint} \ A_n\cr
{\rm (iii)}&& \m(\Theta A)=0\ \text{if  and only if }\ \m(A)=0, \end{eqnarray}
for all elements $A, A_n$ of $\mathcal A$. Then $\Theta$ induces a linear transformation (noted again by $\Theta$) on the space of measurable functions, defined as follows, $\Theta\chi_A= \chi _{\Theta A}$.
\begin{remark}
The question whether a measure preserving set transformation can be obtained from a point mapping has been already considered. By a result of von Neumann,
 so is   the case if for instance $X$ is   a closed region in $\R^n$ and $\m$  is equivalent to the Lebesgue measure, see \cite[p.\! 463]{L}. 
\end{remark}
  \vskip 1 pt 
      \begin{lemma}\label{a0} Let $T$ be a linear operator on  $L^p(\m)$ where $1\le p<\infty$ and $p\neq 2$, and such that $\|Tf\|_{p, \m}=\|f\|_{p, \m}$, for all $f\in L^p(\m)$. Then there exists a regular set-isomorphism $\Theta$ and a function $h(x)$ such that $T$ is given by
    $$ Tf(x)= h(x) \Theta f(x).$$
   Define a measure $\m^*$ by $\m^*(A)= \m(\Theta^{-1} A)$. Then 
   $$ |h(x)|^p=\frac{\dd \m^*}{\dd \m}(x)\qq \text{a.e. on } \ \Theta X\, .$$
      \end{lemma}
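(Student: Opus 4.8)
The plan is to follow Lamperti's original argument, the key idea being to exploit the characterization of $L^p$-norm via disjointness of supports when $p \neq 2$. First I would observe the crucial geometric fact: for $f, g \in L^p(\m)$ with $1 \le p < \infty$, $p \neq 2$, the equality $\|f+g\|_p^p + \|f-g\|_p^p = 2\|f\|_p^p + 2\|g\|_p^p$ holds if and only if $f$ and $g$ have disjoint supports (a.e.). One direction is immediate; the other follows by analyzing the pointwise function $t \mapsto |a+t|^p + |a-t|^p - 2|a|^p - 2|t|^p$ and checking it vanishes only when $a = 0$ or $t = 0$ precisely because $p \neq 2$ (for $p = 2$ it is identically zero, which is exactly why the statement fails there). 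Since $T$ is a linear isometry, it preserves this identity, hence $T$ maps functions with disjoint supports to functions with disjoint supports.

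Next I would use this to build the set isomorphism. For $A \in \A$, define $\Theta A$ to be (the support of) $T\chi_A$; using that $\chi_A$ and $\chi_{A^c}$ have disjoint supports summing to $\chi_X$, and that $T$ is linear and isometric, one checks that $\Theta$ is well-defined up to null sets and satisfies properties \eqref{iso}(i)--(iii): additivity over disjoint unions comes from the disjoint-support preservation plus linearity, complementation from $T\chi_A + T\chi_{A^c} = T\chi_X$ together with disjointness, and the null-set condition from $\|T\chi_A\|_p = \|\chi_A\|_p = \m(A)^{1/p}$. This extends $\Theta$ to simple functions via $\Theta \chi_A = \chi_{\Theta A}$ and then, by density and the isometry property, to all of $L^p(\m)$ as a linear map commuting with the pointwise lattice operations. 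Then I would set $h = T\chi_X$ (more precisely, on each set where things are finite, compare $Tf$ with $\Theta f$): because $T$ and $\Theta$ both respect disjoint supports and linearity, the ratio $Tf / \Theta f$ is forced to be independent of $f$, giving a measurable function $h$ with $Tf(x) = h(x)\,\Theta f(x)$ for all $f \in L^p(\m)$.

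Finally, for the formula for $|h|^p$: define $\m^*(A) = \m(\Theta^{-1}A)$ on $\Theta X$. Apply the representation to $f = \chi_A$ and compute
$$
\m(A) = \|\chi_A\|_p^p = \|T\chi_A\|_p^p = \int_{\Theta A} |h(x)|^p \, \dd\m(x).
$$
On the other hand $\m(A) = \m(\Theta^{-1}(\Theta A)) = \m^*(\Theta A)$, so $\int_{\Theta A} |h|^p \dd\m = \m^*(\Theta A)$ for every $A$, i.e. for every measurable subset of $\Theta X$ of the form $\Theta A$; since these generate the relevant $\s$-field this identifies $|h|^p$ as the Radon--Nikodym derivative $\dd\m^*/\dd\m$ a.e.\ on $\Theta X$.

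\emph{Main obstacle.} The delicate point is the disjoint-support lemma and its consequences: verifying that the parallelogram-type identity genuinely characterizes disjointness for all $p \neq 2$ (the pointwise analysis near the boundary cases $a = 0$, $t = 0$ needs care, and one must handle $1 \le p < 2$ and $p > 2$ separately, as well as $p = 1$ where $|{\cdot}|$ is not strictly convex), and then bootstrapping this into the well-definedness of $\Theta$ modulo null sets — one must check consistency across overlapping decompositions and handle $\s$-finiteness so that the "support of $T\chi_A$" makes sense and behaves additively. Extracting $h$ as a single function independent of $f$ also requires a short but careful argument ruling out that the multiplier depends on which $f$ one tests against.
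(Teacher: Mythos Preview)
The paper does not actually prove this lemma: it is quoted verbatim as Lamperti's Theorem~3.1 \cite{L} and used as a black box, so there is no in-paper argument to compare against. Your outline is essentially a faithful reconstruction of Lamperti's original proof --- the disjoint-support characterization via the identity $\|f+g\|_p^p+\|f-g\|_p^p=2\|f\|_p^p+2\|g\|_p^p$ (valid precisely for $p\neq 2$), construction of $\Theta$ from supports of $T\chi_A$, extraction of the multiplier $h$, and the Radon--Nikodym computation --- and the obstacles you flag (the pointwise convexity analysis, consistency of $\Theta$ modulo null sets, $\sigma$-finiteness) are exactly the ones Lamperti handles.
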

 \begin{corollary} \label{c0} Let $\m$ be a probability measure. Let $T$ be a  positive isometry of $L^p(\mu)$ with $1\le p<\infty$ and $p\neq 2$, such that $T1=1$. Then $Tf(x)=  \Theta f(x)$ with $\Theta 1=1$ and $\Theta $ is a regular set-isomorphism. Moreover for any $f\in L^\infty(\m)$, $|Tf|^a\buildrel{\text{a.e.}}\over{=}T|f|^a$, for any $0\le a<\infty$.    Further $|Tf|^p\buildrel{\text{a.e.}}\over{=}T|f|^p$, if $f\in
L^p(\m)$.  \end{corollary}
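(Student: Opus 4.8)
The plan is to derive Corollary~\ref{c0} from Lemma~\ref{a0} in two stages: first pin down the multiplier $h$, then establish the multiplicativity relations for $|Tf|^a$ and $|Tf|^p$. Since $T$ is a positive isometry of $L^p(\m)$ with $1\le p<\infty$, $p\neq 2$, Lemma~\ref{a0} applies and gives a regular set-isomorphism $\Theta$ and a function $h$ with $Tf=h\cdot\Theta f$, together with $|h|^p=\dd\m^*/\dd\m$ a.e.\ on $\Theta X$, where $\m^*(A)=\m(\Theta^{-1}A)$. The key normalizations are that $T$ is positive (so $h\ge 0$, since $\Theta\chi_A=\chi_{\Theta A}\ge 0$ and $T\chi_A\ge 0$ forces $h\ge 0$ on $\Theta X$) and $T1=1$. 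Applying $T1=1$ to the formula gives $h\cdot\Theta 1=1$; since $\Theta$ is a set-isomorphism, $\Theta 1=\chi_{\Theta X}$, so $h=\chi_{\Theta X}=1$ on $\Theta X$ and hence $Tf=\Theta f$. Moreover $1=T1=\Theta 1=\chi_{\Theta X}$ forces $\m(X\setminus\Theta X)=0$, i.e.\ $\Theta 1=1$, and feeding this back into $|h|^p=\dd\m^*/\dd\m$ shows $\m^*=\m$, so $\Theta$ is in fact measure-preserving.

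Next I would prove $|\Theta f|^a = \Theta(|f|^a)$ a.e.\ for $f\in L^\infty(\m)$ and $0\le a<\infty$. The natural route is to start from indicator functions: $\Theta\chi_A=\chi_{\Theta A}$, and since $\chi_{\Theta A}$ is $\{0,1\}$-valued, $|\Theta\chi_A|^a=\chi_{\Theta A}=\Theta(\chi_A)=\Theta(|\chi_A|^a)$, trivially. By property~(ii) of a regular set-isomorphism (finite additivity on disjoint sets) and linearity of $\Theta$, the relation extends to simple functions: for $s=\sum_i c_i\chi_{A_i}$ with the $A_i$ disjoint and $c_i\ge 0$, the sets $\Theta A_i$ are disjoint (this follows from~(i) and~(ii)), so $\Theta s=\sum_i c_i\chi_{\Theta A_i}$ and $|\Theta s|^a=\sum_i c_i^a\chi_{\Theta A_i}=\Theta(\sum_i c_i^a\chi_{A_i})=\Theta(|s|^a)$. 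Then pass to general $f\in L^\infty$ by approximating $|f|$ uniformly by simple functions $s_n\uparrow|f|$ (or $s_n\to f$ uniformly), using that $\Theta$, being an $L^p$-isometry, is continuous and respects monotone/uniform limits, so that $\Theta|f|=\lim\Theta s_n$ and likewise for the $a$-th powers; the uniform bound keeps everything in $L^p$. One should also note $|\Theta f|=\Theta|f|$ for real and then complex $f$ (decompose into positive/negative parts, then real/imaginary parts), which is the $a=1$ case and is needed to make sense of the manipulation.

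Finally, for $f\in L^p(\m)$ the relation $|Tf|^p=T|f|^p$ follows by the same simple-function argument but now with an $L^1$-approximation: if $s_n\to f$ in $L^p$ then $|s_n|^p\to|f|^p$ in $L^1$; since $\Theta$ is measure-preserving it is an $L^1$-isometry as well (or: the change-of-variables formula with $\m^*=\m$), so $\Theta|s_n|^p\to\Theta|f|^p$ in $L^1$, while $|\Theta s_n|^p\to|\Theta f|^p$ in $L^1$ because $\Theta s_n\to\Theta f$ in $L^p$. Matching limits gives $|\Theta f|^p=\Theta(|f|^p)$ a.e. The main obstacle, and the only point requiring genuine care, is the limiting argument: one must ensure the approximating simple functions can be chosen so that \emph{both} $\Theta s_n\to\Theta f$ in the relevant norm \emph{and} $\Theta(|s_n|^a)\to\Theta(|f|^a)$, which for $a\ne p$ and $f$ merely in $L^p$ need not hold — this is exactly why the statement restricts the general-$a$ claim to $f\in L^\infty$, where uniform approximation sidesteps the integrability issue, and restricts the $a=p$ claim to all of $L^p$, where $L^1$-convergence of $p$-th powers is automatic.
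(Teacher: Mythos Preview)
Your proposal is correct and follows essentially the same route as the paper: apply Lamperti's structure theorem, use $T1=1$ to force $h\equiv 1$ (the paper defers this step to the proof of Lamperti's theorem, whereas you argue it directly via $h\cdot\chi_{\Theta X}=1$), verify $|Tf|^a=T|f|^a$ on indicators and simple functions, then pass to $L^\infty$ and to $L^p$ by approximation. Your write-up is in fact more detailed than the paper's---in particular you make explicit that $\Theta$ is measure-preserving and spell out the $L^1$-convergence of $p$-th powers needed for the final limit, points the paper handles with a single ``by approximation''.
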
 \begin{proof}By Lemma \ref{a0}, $Tf(x)= h(x) \Theta f(x)$. As $\m(X)=1$ and $T1=1$ it follows from the proof of Theorem 3.1 in \cite{L} that   $h(x)\buildrel{\text{a.e.}}\over{=}1$,  and  
$T= \Theta $. But as  $\Theta \chi_A= \chi _{\Theta A}$, we get $|T f|^a=T |f|^a$ for simple functions, for any $0\le a<\infty$.  Hence by approximation $|Tf|^a\buildrel{\text{a.e.}}\over{=}T|f|^a$ holds for all $f\in
L^\infty(\m)$. Further by approximation again, since $T$ is an isometry of $L^p(\mu)$, $|Tf|^p\buildrel{\text{a.e.}}\over{=}T|f|^p$, if $f\in
L^p(\m)$.\end{proof}

 For the sake of completeness, we included the following lemma concerning   the (simpler) case $p=2$.
  \begin{lemma}\label{a1}
Let $T$ be a  positive isometry of $L^2(\mu)$ such that $T1=1$.
Then 
  $ (Tf)^2\buildrel{\text{a.e.}}\over{=}Tf^2 $, for any $f\in L^2(\mu)$.

\end{lemma}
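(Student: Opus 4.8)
\emph{Proof plan.} The plan is to use the two hypotheses on $T$ separately: positivity, which controls $T$ pointwise, and the $L^2(\mu)$-isometry property, which controls $T$ in mean square. First I would record that a linear $L^2(\mu)$-isometry preserves the scalar product (by polarization), so that, using $T1=1$, one has $\int_X Tf\,\dd\mu=\langle Tf,T1\rangle=\langle f,1\rangle=\int_X f\,\dd\mu$ for every $f\in L^2(\mu)$. Combined with positivity (which gives $|Tg|\le T|g|$ a.e.\ for all $g$, and in particular $0\le T\chi_A\le T1=1$ for every $A\in\mathcal A$), this already settles the case of indicators: $\int_X (T\chi_A)^2\,\dd\mu=\|\chi_A\|_{2,\mu}^2=\mu(A)=\int_X\chi_A\,\dd\mu=\int_X T\chi_A\,\dd\mu$, hence $\int_X T\chi_A(1-T\chi_A)\,\dd\mu=0$ with a non-negative integrand, which forces $T\chi_A\buildrel{\text{a.e.}}\over{=}\chi_B$ for some measurable $B$ with $\mu(B)=\mu(A)$; in particular $(T\chi_A)^2\buildrel{\text{a.e.}}\over{=}T\chi_A=T\chi_A^2$.

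Next I would pass to simple functions. If $A,A'\in\mathcal A$ are disjoint, then $T\chi_A+T\chi_{A'}=T\chi_{A\cup A'}$ takes only the values $0$ and $1$ a.e.\ by the previous step, which, since $T\chi_A$ and $T\chi_{A'}$ are themselves $\{0,1\}$-valued, forces $(T\chi_A)(T\chi_{A'})\buildrel{\text{a.e.}}\over{=}0$; thus $T$ sends disjoint indicators to indicators with disjoint supports. Hence for a simple function $s=\sum_i c_i\chi_{A_i}$ with the $A_i$ pairwise disjoint one gets $Ts=\sum_i c_i\chi_{B_i}$ with the $\chi_{B_i}$ having pairwise disjoint supports, and therefore $(Ts)^2=\sum_i c_i^2\chi_{B_i}=T\bigl(\sum_i c_i^2\chi_{A_i}\bigr)=Ts^2$.

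Finally, to reach an arbitrary $f\in L^2(\mu)$ I would first note that $\|Tg\|_{1,\mu}\le\int_X T|g|\,\dd\mu=\|g\|_{1,\mu}$, so that $T$ extends to a positive $L^1(\mu)$-contraction; in particular $Tf^2$ is meaningful, since $f^2\in L^1(\mu)$. Then, truncating $f$ and approximating its truncations by simple functions, I would pick bounded simple $s_k\to f$ in $L^2(\mu)$; then $s_k^2\to f^2$ in $L^1(\mu)$, while $Ts_k\to Tf$ in $L^2(\mu)$ gives $(Ts_k)^2\to (Tf)^2$ in $L^1(\mu)$ and $Ts_k^2\to Tf^2$ in $L^1(\mu)$, so letting $k\to\infty$ in $(Ts_k)^2=Ts_k^2$ yields $(Tf)^2\buildrel{\text{a.e.}}\over{=}Tf^2$.

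The argument is not deep; the only point requiring attention is this last limiting step, and in particular making sense of $Tf^2$ for $f$ merely in $L^2(\mu)$ — which is exactly what the $L^1$-contraction extension provides — since squaring is continuous only from $L^2(\mu)$ into $L^1(\mu)$. As an alternative to the simple-function route, once $\int_X Tf\,\dd\mu=\int_X f\,\dd\mu$ is known one may, for $f\in L^\infty(\mu)$, apply the positive operator $T$ (with $T1=1$) to $0\le (f-t)^2$ to obtain $T(f^2)-2t\,Tf+t^2\ge 0$ a.e.\ for each rational $t$, hence, after discarding a single null set and using continuity in $t$, for all real $t$; the choice $t=Tf(x)$ gives $(Tf)^2\le T(f^2)$ a.e., and equality of the two integrals then forces $(Tf)^2\buildrel{\text{a.e.}}\over{=}Tf^2$, before extending to $L^2(\mu)$ as above.
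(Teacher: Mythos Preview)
Your proof is correct and follows the same overall scheme as the paper's: show that $T$ sends indicators to indicators, deduce $(Ts)^2=Ts^2$ for simple $s$, then pass to the limit in $L^2(\mu)$.

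The tactical details differ in two places worth noting. For indicators, the paper argues that non-negative $f,g$ have disjoint supports iff $\|f+g\|_2^2=\|f\|_2^2+\|g\|_2^2$, applies this to $T\chi_A,T\chi_{A^c}$, and reads off that $\{0<T\chi_A<1\}$ is null; your route via $\int T\chi_A(1-T\chi_A)\,\dd\mu=0$ is more direct and avoids the disjoint-support detour. For the limiting step, the paper compares $Tf$ with $\sqrt{Tf^2}$ using $|\sqrt a-\sqrt b|\le\sqrt{|a-b|}$ and H\"older, whereas you first extend $T$ to an $L^1(\mu)$-contraction (via $\int Th=\int h$) so that $Tf^2$ is unambiguously defined and the convergence $(Ts_k)^2\to(Tf)^2$, $Ts_k^2\to Tf^2$ in $L^1(\mu)$ goes through cleanly. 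Your alternative Jensen-type argument ($(Tf)^2\le Tf^2$ pointwise from $T(f-t)^2\ge 0$, then equality of integrals) for bounded $f$ is not in the paper and gives a pleasant shortcut that bypasses the simple-function bookkeeping.
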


\begin{proof} 
 Let $A\in \mathcal{ A}$ with  $0<\mu(A)<1$.
Trivially  $f,g\in L_+^2(\mu)$  have disjoint  supports if  and only if $\|f+g\|^2_{2,\mu}=\|f\|^2_{2,\mu}+\|g\|^2_{2,\mu}$.
Hence it follows that $T{\bf 1}_A$ and $T{\bf 1}_{A^c}$ have disjoint  supports. Let $E=\left\{0<T{\bf
1}_A<1\right\}=\left\{0<T{\bf 1}_{A^c}<1\right\}$. As $E\subset {\rm supp}(T{\bf 1}_A)\cap {\rm supp}(T{\bf 1}_{A^c})$, we conclude that $T{\bf 1}_A$ and
$T{\bf 1}_{A^c}$  are
 indicator  functions. Thus any simple function  is mapped by $T$ into a simple   function. For these functions we have  $(Tf)^2=Tf^2$. 
  Now let   $f\in L_+^2(\mu)$; there exists a sequence $(f_n)\subset
L^\infty(\mu)$ such that $\|f-f_n\|_2 \to 0$ as $n\to \infty$.  First observe by applying twice   H\"older's inequality that 
\begin{align*}
\int_X   (T|f_n^2 - f^2|)^{1/2} \dd \m  \le \big(\int_X    T|f_n^2 - f^2|  \dd \m\big)^{1/2}=\|f_n^2 - f^2\|_1^{1/2}
   \le  \big(\|f_n  - f \|_2 \cdot\|f_n  + f \|_2\big)^{1/2}. \end{align*}
Consequently,
 \begin{align*}
& \| Tf  -\sqrt{Tf^2}\|_1 \le \|T   f  -T f_n  \|_1 + \|    Tf_n -\sqrt{Tf_n^2}  \|_1+\|  \sqrt{Tf_n^2}-\sqrt{Tf^2}\|_1     
\cr &= \|    f  -  f_n  \|_1  +\|  \sqrt{Tf_n^2}-\sqrt{Tf^2}\|_1 \le  \|    f  -  f_n  \|_1  +\big(\|f_n  - f \|_2 \cdot\|f_n  + f \|_2\big)^{1/2}
   \to 0,
\end{align*}
as $n \to \infty$. Hence  $(Tf)^2=f^2$  a.\,e.\,. As $f=f^+-f^-$, we deduce that this holds  for any $f\in L^2(\m)$.  \end{proof}

 \subsection{Stable processes.} \label{secstab} This part was essentially written for  the ergodician reader  not necessarily familiar with stable processes.   We  use very few from the theory.   We refer to \cite{MP}. We also refer the interested reader to the very nice book of Talagrand \cite{T} for a thorough study of the regularity of stable processes.
   For the same reason, the last part of the proof of Theorem \ref{t6} is detailed and we refer to \cite{MP}.  The stable processes we consider are simple,   of finite rank. They are
 however
 {\it not} strongly stationary. Recall and briefly explain some basic facts and  properties of stable random variables and stable processes. \vskip 2 pt Let $0<\a\le 2$.
A real valued random variable
$\theta$  is symmetric
$\a$-stable of parameter $\s$ if 
\begin{equation}\label{vast} \E  e^{it\theta}= e^{-\s^\a |t|^\a}, \qq\quad \forall t\in \R
.\end{equation}
 Then for all $0<r<\a$, $(\E |\theta|^r)^{1/r}= \d(r,\a)\s$,
  where $\d(r,\a)$ depends only on $r$ and $\a$.
 Stable random variables are mixtures of Gaussian random variables. Indeed,  as is well-known  the function
$f(\l)=e^{-\l^\a}$ is completely monotone on
$\R^+$,  for  each $0<\a\le 1$. Consequently, there exists a random variable $v(\a)$ such that 
$\E e^{-\l v(\a)}=f(\l)$, for all $\l\ge 0$.  
Let $\eta(\a):=(2v(\a/2))^{1/2}$.
 Let $g$ be Gaussian standard independent from $\eta$. By taking Fourier
transforms
 $\E e^{i t\eta(\a).g}=\E e^{-t^2\eta(\a)^2/2}=\E e^{-t^2v(\a/2) } =e^{- |t|^\a}$.
Whence it follows that $\theta \buildrel{\mathcal D}\over{=}\eta(\a).g$.
 Let $\theta_1, \ldots, \theta_J$ be i.i.d. $\a$-stable real valued random variables, and let $c_1, \ldots, c_J$ be real numbers. From \eqref{vast} we get
 \begin{equation}\label{sumstable}  \sum_{j=1}^J  c_j\theta_j \buildrel{\mathcal D}\over{=} \theta_1\Big(\sum_{j=1}^J 
|c_j|^\a\Big)^{1/\a}.  \end{equation}
  A stochastic process $\{X(t), t\in T\}$ is a real valued $\a$-stable if any finite linear combination $\sum_{j }   c_jX(t_j)$ is an
$\a$-stable real valued random variable. 
\vskip 8 pt 
From now on, we extend the notation used in \eqref{fj0} in the following way. To any 
$f\in L^p(\m)
$,  $1<p\le \infty$, we  associate  the random element,
\begin{equation}\label{fjgen}F_{J,f}(\o, x) =\frac{1}{J^{1/p}}\sum_{1\le j\le J} \theta_j(\o) T_jf( x) ,\qq \quad \o\in \O, \ x\in X
.\end{equation}
  \begin{remark} \label{fjsplit}As long as entropy criteria are studied in  $L^p(\m)$ with $2\le p\le \infty$, the relevant random elements $ F_{J,f} $ are Gaussian ($\a=2$). When $1<p<2$, we choose them  $p$-stable ($\a=p$).
\end{remark}  Clearly \eqref{fjgen} defines a  real valued $\a$-stable process. It follows in particular  that for any $x\in X$,  
 \begin{equation}\label{fjgen1}F_{J,f}(., x) \buildrel{\mathcal D}\over{=} \theta_1\Big( \frac{1}{J }\sum_{1\le j\le J} |T_jf
(x)|^\a\Big)^{1/\a}  .\end{equation}
  Let     $\{\eta_j,j=1,\ldots , J  \}$ be a sequence
   of   i.i.d.   random variables with the same law than $\eta(\a)$, and let $ \{g_j, j=1,\ldots , J\ \}$ be a sequence
  of i.i.d. Gaussian standard random variables. We assume that these sequences are respectively defined on  joint probability spaces
 $(\Omega',\mathcal{ A'},{\bf P'})$  and $(\Omega '',\mathcal{ A} '',{\bf P} '')$.
\vskip 2 pt
  Then the process    $$
 \mathcal F_{J,f}  (\omega',\omega '',x)= 
 \sum_{j\le  J} \eta_j(\omega ' )g_j(\omega'')T_jf(x) ,   \qq \quad x\in X
 $$
 has the same distribution as  $\{F_{J,f}(x), x\in X\}  $.

 \subsection{A comparison Lemma.}In the next lemma, we   denote   the norms  corresponding to the
spaces
$L^r(\m)$ and
$L^r(\P)$   respectively by $\|.\|_{r,\mu}$ and
$\|.\|_{r,\P}$.   
 \begin{lemma}\label{a2} Let $1\le p\le 2$.
Let $S_n \colon L^p(\mu)\rightarrow L^p(\mu)$, $n=1,2,\dots $ be
continuous operators verifying assumption {\rm (C)}. 
    Let  $f\in L^p(\mu)$. Let also  $I$ be a finite set of integers such that  
$$\| S_n(f)-S_m(f) \|_{p,\mu} \neq 0,  \qq \quad\hbox{ for all $n\neq m$,   $n,m\in I$}.$$ 

Then  given  any index $\mathcal{ J}_0$  and $0<\e<1$,
   there exists a sub-index
$\mathcal J $ and a measurable set $A $  with  $\mu \{A \} \ge  1-\e$,  such that for all   $x\in A $, we have   for all $J\in \mathcal{ J}$,    all
$n,m\in I$ with  $m\not=n$:
   \vskip 2 pt 
 {\rm (i)} If $1\le p<2$ and $ r<p$, \begin{equation}\label{comp.p}   (1-\e)^{1/p} \le   \frac{   \big\| (S_n-S_m)(F_{J,f} )  (x)\big\|_{r,\P}}{
c(r)\,\|S_n(f)-S_m(f)\|_{p,\mu}}
 \le  (1+\e)^{1/p} , 
 \end{equation} 
  where $c(r)=\|\theta_1\|_r$.
  \vskip 2 pt 
{\rm (ii)} If $p=2$,  
\begin{equation}\label{comp.2}
   (1-\e)^{1/2} \le   \frac{   \big\| (S_n-S_m)(F_{J,f} )  (x)\big\|_{2,\P}}{  \|S_n(f)-S_m(f)\|_{2,\mu}}
 \le  (1+\e)^{1/2}.
  \end{equation} 
   Further, for   any
positive increasing convex function   $G$ on ${\mathbb{R}}^+$, any $J\in \mathcal J $,
\begin{eqnarray*}    \E    G\Big( \sqrt{1-\e}  \sup_{n,m\in I}
Z(S_n(f))-Z(S_m(f))\Big)
 & \le &    \E \int_{X} G\big(\sup_{n,m\in I} S_n(F_{J,f})-S_m(F_{J,f})\big)
d\mu.  \end{eqnarray*}
In particular  for   any $J\in \mathcal{ J} $,
\begin{eqnarray}\label{comp.3}\sqrt{1-\e}\  \E \sup_{n \in I}\, Z(S_n(f))
&\le & \E \int_{X}  \sup_{n \in I} S_n(F_{J,f})  \ d\mu.
\end{eqnarray}\end{lemma}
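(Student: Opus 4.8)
The plan is to exploit the commutation condition (C) to transfer the $L^2(\mu)$ geometry of the set $\{S_n(f)\}_{n\in I}$ onto the random process $\{S_n(F_{J,f})\}$, and then invoke the standard comparison between Gaussian (resp. $\alpha$-stable) processes and their discrete approximations. First I would record the key identity coming from the fact that $F_{J,f}(\cdot,x)$ is an $\alpha$-stable process: by \eqref{fjgen1} and the linearity of $S_n$, for each fixed $x$ we have
\begin{equation*}
(S_n-S_m)(F_{J,f})(\cdot,x)\ \dist\ \theta_1\Big(\tfrac1J\sum_{j\le J}\big|T_j\big((S_n-S_m)(f)\big)(x)\big|^\alpha\Big)^{1/\alpha},
\end{equation*}
using that $S_n$ commutes with each $T_j$ and that $F_{J,f}$ is built from i.i.d. stable weights. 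Hence $\|(S_n-S_m)(F_{J,f})(x)\|_{r,\P}=c(r)\,\big(\tfrac1J\sum_{j\le J}|T_j(g_{nm})(x)|^\alpha\big)^{1/\alpha}$ where $g_{nm}=(S_n-S_m)(f)$ and $c(r)=\|\theta_1\|_r$ (for $p=2$ take $r=2$, $\alpha=2$, $c(2)=1$).

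Next I would apply the ergodic-type averaging in (C): since $T_j$ are positive isometries of $L^1(\mu)$ with $T_j1=1$, Corollary \ref{c0} (or Lemma \ref{a1} when $p=2$) gives $|T_jh|^\alpha \buildrel{\text{a.e.}}\over{=}T_j|h|^\alpha$, so $\tfrac1J\sum_{j\le J}|T_j(g_{nm})|^\alpha=\tfrac1J\sum_{j\le J}T_j(|g_{nm}|^\alpha)$, and by \eqref{Ctj} this converges in $L^1(\mu)$ to $\int|g_{nm}|^\alpha\,d\mu=\|g_{nm}\|_{p,\mu}^p$ (recall $\alpha=p$). Passing along a subsequence to a.e. convergence, and doing this simultaneously for the finitely many pairs $n\neq m$ in $I$ and refining through the given index $\mathcal J_0$ by a diagonal argument, I obtain a sub-index $\mathcal J$ and, by Egorov's theorem, a set $A$ with $\mu(A)\ge 1-\e$ on which $\tfrac1J\sum_{j\le J}T_j(|g_{nm}|)^p$ lies within factor $(1\pm\e)$ of $\|g_{nm}\|_{p,\mu}^p$ for all $J\in\mathcal J$ and all $n\neq m$ in $I$. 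Taking $p$-th roots yields exactly \eqref{comp.p} and, in the case $p=2$, \eqref{comp.2}.

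For the last two displays (the $p=2$ case), the idea is that \eqref{comp.2} says the random vector $\big(S_n(F_{J,f})(x)\big)_{n\in I}$, as a Gaussian vector in $\R^I$, has pairwise $L^2(\P)$-distances dominating $\sqrt{1-\e}$ times those of the Gaussian vector $\big(Z(S_n(f))\big)_{n\in I}$ (whose increment structure is precisely $\|Z(S_n(f))-Z(S_m(f))\|_{2,\P}=\|S_n(f)-S_m(f)\|_{2,\mu}$, since $Z$ is the isonormal process on $L^2(\mu)$). Since both are finite Gaussian families, Slepian's lemma in its increment form (the Sudakov–Fernique / Slepian comparison, cf. Lemma \ref{a4}) lets me compare expectations of increasing convex functions of the supremum of differences: for fixed $x\in A$,
\begin{equation*}
\E\, G\Big(\sqrt{1-\e}\sup_{n,m\in I}\big(Z(S_n(f))-Z(S_m(f))\big)\Big)\ \le\ \E\, G\Big(\sup_{n,m\in I}\big(S_n(F_{J,f})(x)-S_m(F_{J,f})(x)\big)\Big).
\end{equation*}
Integrating this over $x\in A$ (and using $G\ge 0$ to drop $A^c$) gives the penultimate display, and the special case $G(t)=t$ together with the centering of the processes yields \eqref{comp.3}. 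The main obstacle I anticipate is the bookkeeping of the double limit: one must extract a single sub-index $\mathcal J$ that works for \emph{all} pairs in $I$ and on a \emph{common} large set $A$, and one must be careful that the convergence in \eqref{Ctj} is only in $L^1(\mu)$, so an Egorov/subsequence argument is genuinely needed rather than pointwise convergence for free; the Slepian step itself is routine once the two-sided distance estimate \eqref{comp.2} is in hand.
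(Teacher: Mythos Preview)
Your proposal is correct and follows essentially the same route as the paper: the distributional identity for $(S_n-S_m)(F_{J,f})(x)$ via commutation and \eqref{sumstable}, the reduction $|T_jh|^p=T_j|h|^p$ via Corollary~\ref{c0}/Lemma~\ref{a1}, the $L^1$-mean ergodic convergence from (C), and finally Slepian's lemma. The only cosmetic difference is that you extract a subsequence with a.e.\ convergence and then invoke Egorov's theorem to produce the common set $A$, whereas the paper does this by hand, choosing $J_k$ so that the $L^1$ errors are summable and applying Chebyshev plus a Borel--Cantelli type intersection; both are standard and interchangeable. One small bookkeeping point (present in the paper's own argument as well): integrating the pointwise Slepian inequality over $A$ rather than $X$ introduces a harmless factor $\mu(A)\ge 1-\e$ in front of the left-hand side of the $G$-inequality, which is absorbed by adjusting $\e$.
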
 
 \begin{proof}   We assume $\mathcal{ J}_0
=\N$, the case of an arbitrary index $\mathcal{ J}_0$
being treated identically. 
 \vskip 3 pt {\it Proof of {\rm (i):}} Let
$f\in L^p(\mu)$. By  the commutation assumption,  
$$  S_n(F_{J,f} )= {1\over {J^{1\over p}}}
\sum_{j\leq J}\theta_jS_n( T_j(f) )= {1\over {J^{1/ p}}}
\sum_{j\leq J}\theta_jT_j( S_n(f) ), \qq  \quad  \forall n\ge 1, \ \forall J\ge 1.
$$
 Hence by \eqref{sumstable}, for any  fixed $x\in X$, 
$$
{1\over {J^{1\over p}}}\sum_{j\leq J}\theta_j T_j( S_n(f)-S_m(f) ) (x)
\buildrel {\mathcal{ D}}\over {=} \theta_1 \Big({1\over J}\sum_{j\leq
J}\big|T_j( S_n(f)-S_m(f) ) (x)\big|^p \Big)^{1/ p}.
$$
Using   the fact that  $ |T_jh |^p\buildrel {{\rm a.e.}}\over {=} T_j|h |^p $ if $h\in L^p(\m)$,  
it follows that 
 \begin{eqnarray}\label{nm}
 \E\,   \big| (S_n-S_m)(F_{J,f} )  (x)\big|^r& =& ( \E\,  |\theta_1|^r)\Big({1\over J}\sum_{j\leq
J}\big|T_j( S_n(f)-S_m(f) ) (x)\big|^p \Big)^{r/ p}
\cr &=& ( \E\,  |\theta_1|^r)
 \Big({1\over J}\sum_{j\leq
J}  T_j( \big|S_n(f)-S_m(f)\big|^p) (x)  \Big)^{r/ p},
\end{eqnarray} 
for almost all $x$.  
\vskip 4 pt
   
Let $I$ be a finite set of integers such that  
$$\| S_n(f)-S_m(f) \|_{p,\mu} \ge \d>0,  \qq \quad\hbox{ for all $n\neq m$,   $n,m\in I$}.$$      Let $0<\e <1$ and choose an integer  $L $  sufficiently large so that 
 $2^{-L }\le \e$   and    
$\d\ge 2^{-L-1}/\e $. 
Assumption (C) implies that   
$$
\lim_{J\to\infty}\Big\|{1\over J}\sum_{j\le J}T_j( \big|S_n(f)-S_m(f)\big|^p)-\|S_n(f)-S_m(f)\|^p_{p,\mu}\Big\|_{1,\m}=0,
$$
for all $n,m\in I$. By extraction, we can find an index $\mathcal{ J} =
\{J_k,k>L\} $ (depending on $I$    and $\e$), such that   $$
 \Big\|{1\over J_k} \sum_{j\le J_k} T_j( \big|S_n(f)-S_m(f)\big|^p) -
\|S_n(f)-S_m(f)\|^p_{p,\mu} \Big\|_{1,\mu}\le \frac{1}{\#(I)^22^{ 2k}} ,
$$
 for all $n,m\in I$ and all $k>L$. Put 
 $$ A_k =\Big\{\exists n,m\in I:\Big|{1\over J_k} \sum_{j\le J_k} T_j( \big|S_n(f)-S_m(f)\big|^p) -
\|S_n(f)-S_m(f)\|^p_{p,\mu}\Big|\ge 2^{-k}\Big\}, \qq  k>L. $$ 
By Chebyshev's  inequality,  we have   $\m(A_k)\le 2^{-k} $.
        Let 
\begin{align*}
\qq A_\e(n,m, J) & =   \cr\Big\{
       (1-\e)   \|    S_n(f) &  -S_m(f)\|^p_{p,\mu}   \le {1\over J }  \sum_{j\le J } T_j\Big(  \big|S_n(f)-S_m(f)\big|^p
  \Big)\le ( 1+\e)\|S_n(f)-S_m(f)\|^p_{p,\mu}     \Big\},   \end{align*}  
and 
$$ A_{I,\e }= \bigcap_{  k>L } \bigcap_{n,m\in I  } A_\e(n,m, J_k ).$$
Then, 
\begin{eqnarray*}
 \mu \{A_{I,\e }  \}
 \ge   \mu\Big\{\bigcap_{k>L} A_k^c \Big\}
    \ge  1-\sum_{k>L}2^{-k} 
   = &1-2^{-L}\ge 1-\e.
\end{eqnarray*}

 \vskip 5 pt  As by \eqref{nm}, for any $r<p$,
\begin{eqnarray*} 
    \big\| (S_n-S_m)(F_{J,f} )  (x)\big\|_{r,\P} &=& \|\theta_1\|_r
 \Big({1\over J}\sum_{j\leq
J}  T_j( \big|S_n(f)-S_m(f)\big|^p) (x)  \Big)^{1/ p},
\end{eqnarray*}
it follows that  for every $x\in A_{I,\e }$, we have   $$ 
   (1-\e)^{1/p} \le   \frac{   \big\| (S_n-S_m)(F_{J,f} )  (x)\big\|_{r,\P}}{ \|\theta_1\|_r\,\|S_n(f)-S_m(f)\|_{p,\mu}}
 \le  (1+\e)^{1/p} , $$  
  for all $J\in \mathcal{ J}$,    all $n,m\in I$, $m\not=n$, and $ r<p$.

  \vskip 3 pt {\it Proof of {\rm (ii):}} The proof is the first inequality is identical and so we omit it. Let
$f\in L^2(\mu)$.
Let $0<\e<1$ be fixed.     Let    $I$ be a finite set of integers such that  
$$\| S_n(f)-S_m(f) \|_{p,\mu} \neq 0,  \qq \quad\hbox{ for all $n\neq m$,   $n,m\in I$}.$$ 
    
 Now notice that $\mu \{\  \E  \sup_{n\in I}
S_n(F_{J,f}) \ge 0 \  \}=1$. Using \eqref{comp.2},   next Slepian comparison lemma, we have along the index $\mathcal{ J}$, \begin{eqnarray*}\int_{X}\E  \sup_{n\in
I} S_n(F_{J,f})  \ d\mu &\ge & \int_{A }\E  \sup_{n\in
I} S_n(F_{J,f})  \ d\mu\cr  &\ge &  \sqrt{1-\e}\, \m(A )\, \E  \sup_{n\in I} Z(S_n(f))\ge  (1-\e) \, \E  \sup_{n\in I} Z(S_n(f)). 
\end{eqnarray*}
 Similarly,
  \begin{align*}
&\int_{X} \E
G\big(\sup_{n,m\in I} S_n(F_{J,f})-S_m(F_{J,f})\big)    d\mu
\\
&\qquad\ge
\int_{A } \E G\big(\sup_{n,m\in I} S_n(F_{J,f})-S_m(F_{J,f})\big)
d\mu
\\
&\qquad \ge\int_{A } \E G\big( \sqrt{1-\e}  \sup_{n,m\in I}
Z(S_n(f))-Z(S_m(f))\big) d\m
\\
&\qquad \ge\sqrt{1-\e}  \, \E   G\big(
\sqrt{1-\e}  \sup_{n,m\in I} Z(S_n(f))-Z(S_m(f))\big)
\\
&\qquad
\ge\sqrt{1-\e}  \, \E   G\big( \sqrt{1-\e}  \sup_{n,m\in I}
Z(S_n(f))-Z(S_m(f))\big) .
\end{align*}
  This completes the proof of Lemma \ref{a2}.
\end{proof}
\subsection{Banach Principle.} 
 Let   $$ \mathcal{ Y} = \{f\in L^\infty(\mu) : \|f\|_\infty \le 1 \}.$$ 
 A mapping  $V\colon  (\mathcal{ Y} , d)\rightarrow L^0(\mu)$ is said to be continuous at $0$, if $V$ is $d$-continuous at 0 on $\mathcal{ Y}$.
    When $V$ is linear, then $V$ is continuous at 0   if and only if
$V$ is $d$-continuous on $L^\infty(\mu)$.
   \begin{lemma}[\cite{BJ}]\label{bainfty}
Let $\left\{S_n,n\ge 1\right\}$ be a sequence of linear operators of
$L^\infty(\mu)$ in $L^0(\mu)$. Assume that the following conditions
are realized:
\begin{eqnarray*}{\rm (i)}& &\hbox{Each $S_n$ is continuous at $0$,}
\cr {\rm (ii)}& &\hbox{For any  $f\in L^\infty(\mu)$, $\mu \{ x  :
    \{S_n(f)(x), n\ge 1 \} \text{ converges}   \}=1$.}
\end{eqnarray*}
Then $S^*\colon \mathcal{ Y} \rightarrow L^0(\mu)$ is continuous at $0$.
\end{lemma}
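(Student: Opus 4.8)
I would follow the pattern of the classical Banach-principle proof and argue by contradiction. Suppose $S^{*}$ were not continuous at $0$ on $\mathcal Y$: then there are $\e_{0},\delta_{0}>0$ and $f_{k}\in\mathcal Y$ with $d(f_{k},0)\to0$ but $\mu\{S^{*}f_{k}>\e_{0}\}\ge\delta_{0}$; and since $\{S^{*}f>\e_{0}\}$ is the increasing union of the sets $\{\max_{n\le N}|S_{n}f|>\e_{0}\}$, one may fix integers $N_{k}$ with $\mu\{\max_{n\le N_{k}}|S_{n}f_{k}|>\e_{0}\}\ge\delta_{0}/2$. The plan is to superpose rescaled copies of the $f_{k}$ into a single $h\in\mathcal Y$ whose maximal function $S^{*}h$ is $+\infty$ on a set of positive measure, contradicting hypothesis (ii) applied to $h$, which forces $S^{*}h<\infty$ a.e.

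The feature special to $L^{\infty}$ is that $d(f_{k},0)\to0$ only gives $\|f_{k}\|_{1}\to0$ (equivalently $\|f_{k}\|_{2}\to0$), \emph{not} $\|f_{k}\|_{\infty}\to0$, so a plain series $\sum_{k}c_{k}f_{k}$ need not stay in $L^{\infty}$. I would therefore first split off the harmless part: with $a_{k}:=\|f_{k}\|_{1}^{1/2}\to0$, write $f_{k}=f_{k}'+f_{k}''$ with $f_{k}'=f_{k}\mathbf 1_{\{|f_{k}|\le a_{k}\}}$ and $f_{k}''=f_{k}\mathbf 1_{\{|f_{k}|>a_{k}\}}$; then $\|f_{k}'\|_{\infty}\le a_{k}\to0$, while $f_{k}''$ is supported on a set of measure $\le\|f_{k}\|_{1}/a_{k}=a_{k}\to0$. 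Since $S^{*}f_{k}\le S^{*}f_{k}'+S^{*}f_{k}''$, after passing to a subsequence either $\mu\{S^{*}f_{k}'>\e_{0}/2\}\ge\delta_{0}/4$ for all $k$ (the ``$L^{\infty}$-small'' alternative) or $\mu\{S^{*}f_{k}''>\e_{0}/2\}\ge\delta_{0}/4$ for all $k$ (the ``vanishing-support'' alternative).

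In the $L^{\infty}$-small alternative the classical construction works unchanged: one picks a rapidly growing subsequence $(k_{j})$ and scalars $c_{j}\uparrow\infty$ by recursion so that, $h_{j-1}=\sum_{i<j}c_{i}f_{k_{i}}'\in\mathcal Y$ being already fixed with $S^{*}h_{j-1}<\infty$ a.e. by (ii), the scale $c_{j}$ dominates a level $R_{j}$ with $\mu\{S^{*}h_{j-1}>R_{j}\}$ tiny, while $k_{j}$ is taken so large that $c_{j}\|f_{k_{j}}'\|_{\infty}\le c_{j}a_{k_{j}}\le 2^{-j}$ (possible as $a_{k}\to0$) and, using the continuity at $0$ of the finitely many operators $S_{n}$ with $n\le N_{k_{j-1}}$ (hypothesis (i)), the later terms perturb those $S_{n}$ negligibly in measure; then $h=\sum_{j}c_{j}f_{k_{j}}'$ lies in $\mathcal Y$, and on $\limsup_{j}A_{j}$ --- where $A_{j}$ intersects the hump set of $c_{j}f_{k_{j}}'$ with the sets where $S^{*}h_{j-1}$ and the tail are small, so $\mu(A_{j})$ is bounded below and hence $\mu(\limsup_{j}A_{j})>0$ by the reverse Fatou inequality for sets --- one gets $S^{*}h=+\infty$, the desired contradiction. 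The main obstacle is the vanishing-support alternative: there $\|f_{k}''\|_{\infty}$ is only $\le1$, so no rescaling keeps the superposition inside the unit ball, and the case effectively amounts to the continuity at $0$ of $S^{*}$ along indicators $\mathbf 1_{E_{k}}$ with $\mu(E_{k})\to0$ --- which is exactly where the Bellow--Jones argument becomes, in the words of the paper, ``lengthy and indirect''. At that point I would not reprove it but invoke the streamlined argument of \cite[Theorem 5.1.5]{We}, which relies more essentially on the a.e.-\emph{convergence} (not just a.e.-finiteness) of $(S_{n}f)_{n}$.
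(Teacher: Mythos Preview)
The paper does not actually prove this lemma: immediately after the statement it says ``For a short proof, we refer to \cite[p.\,205]{We}.'' So there is no in-paper proof to compare against; both you and the paper ultimately defer to the same outside reference.

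That said, your proposal is not a self-contained proof either, and you are candid about this. The decomposition $f_k=f_k'+f_k''$ into an $L^\infty$-small piece and a small-support piece is a reasonable first move, and your treatment of the $L^\infty$-small alternative is essentially the classical stacking construction; since $\|f_{k_j}'\|_\infty\to 0$ one can indeed rescale by $c_j\uparrow\infty$ while keeping $\sum_j c_j f_{k_j}'\in\mathcal Y$, and the rest goes through. The genuine gap is exactly where you place it: the vanishing-support alternative. There the functions satisfy only $\|f_k''\|_\infty\le 1$ with $\mu(\mathrm{supp}\,f_k'')\to 0$, no rescaling is available inside $\mathcal Y$, and a superposition $\sum_j f_{k_j}''$ can at best keep $S^*h$ bounded below by a fixed constant on a positive-measure set, which does \emph{not} contradict $S^*h<\infty$ a.e. This is precisely the point at which the a.e.\ \emph{convergence} hypothesis (ii), rather than mere a.e.\ finiteness, must be exploited --- and it is the whole content of the Bellow--Jones argument. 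Your write-up reduces the lemma to this case but does not prove it; you invoke \cite[Theorem 5.1.5]{We}, which is the same source the paper cites. So as a proof, it is incomplete in the same place the paper is.
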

For a short proof, we refer to \cite[p.\,205]{We}.
 The next lemma is used repeatedly. 
  \begin{lemma} \label{a3} Let $(B,\|.\|_B)$ be a Banach space and let $S_n\colon B\to L^0(\m)$, $n\ge 1$, be continuous
in measure operators. Assume that
$$
\m \big\{  \sup_{n\ge 1}|S_n(f)| <\infty \big\}=1,  \qq \qq \text{for all $f\in B$}   .    $$
Then there exists a non-increasing function
$C:]0,1]\to \R_{+}$ such that
for any  $ 0<\varepsilon<1 $, any $J\ge 1$ and any $f\in \mathcal R(B)$, there exists a measurable set 
$X_{\varepsilon,J,f}$ with  $ \m(X_{\varepsilon,J,f}) \ge  1- {\varepsilon} $,   such that for any $x\in X_{\varepsilon,J,f}$, $$\P\Big\{\omega :\sup_{n\ge
1}|S_n(F_{J,f}(\omega,.))(x)\mid
\le C(\e)\E \|F_{J,f} \|_{B}\Big\} \ge
1-\e,$$
    recalling that $F_{J,f}$ is defined in \eqref{fjgen}. 
   \end{lemma}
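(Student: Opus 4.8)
The plan is to combine the Banach principle (in the form available for the relevant Banach space $B$) with Fubini's theorem and a Chebyshev-type argument on the product space $\Omega\times X$. First, since $S_n\colon B\to L^0(\m)$ are continuous in measure and $\m\{\sup_{n\ge 1}|S_n(f)|<\infty\}=1$ for all $f\in B$, the Banach principle yields a non-increasing function $C_0\colon\R^+\to\R^+$ with $C_0(\a)\to 0$ as $\a\to\infty$, such that $\m\{S^*f>\a\|f\|_B\}\le C_0(\a)$ for every $f\in B$ and every $\a\ge 0$ (this is exactly the maximal-inequality reformulation of the Banach principle quoted in Section \ref{s2}, applied to the Banach space $B$; when $B$ is only continuously embedded and not itself a ``nice'' space one uses the $L^\infty$-version Lemma \ref{bainfty} together with \eqref{0cont}, but the upshot is the same qualitative statement). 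Next, given $0<\e<1$, choose $\a=\a(\e)$ large enough that $C_0(\a)\le \e^2/2$, and set $C(\e)=\a(\e)/\e$; this will be the asserted non-increasing function, and it is clearly non-increasing since $\a(\cdot)$ can be taken non-decreasing.

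Fix $J\ge 1$ and $f\in\mathcal R(B)$. Because $f\in\mathcal R(B)$, for $\P$-almost every $\o$ the random element $F_{J,f}(\o,\cdot)=J^{-1/p}\sum_{j\le J}\theta_j(\o)T_jf(\cdot)$ is a finite linear combination of the $T_jf$, hence lies in $B$, with $\|F_{J,f}(\o,\cdot)\|_B<\infty$ a.s. Apply the Banach maximal inequality to the element $F_{J,f}(\o,\cdot)\in B$ with threshold $\a(\e)$: for a.e. $\o$,
$$
\m\Big\{x:\sup_{n\ge 1}|S_n(F_{J,f}(\o,\cdot))(x)|>\a(\e)\,\|F_{J,f}(\o,\cdot)\|_B\Big\}\le C_0(\a(\e))\le \frac{\e^2}{2}.
$$
Integrating this in $\o$ over $(\O,\A,\P)$ and invoking Fubini's theorem on $\O\times X$, one obtains
$$
(\P\otimes\m)\Big\{(\o,x):\sup_{n\ge 1}|S_n(F_{J,f}(\o,\cdot))(x)|>\a(\e)\,\|F_{J,f}(\o,\cdot)\|_B\Big\}\le\frac{\e^2}{2}.
$$
Separately, by Chebyshev's inequality applied to the nonnegative random variable $\o\mapsto\|F_{J,f}(\o,\cdot)\|_B$, the set $E=\{\o:\|F_{J,f}(\o,\cdot)\|_B>(2/\e)\,\E\|F_{J,f}\|_B\}$ has $\P(E)\le\e/2$. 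On the complement, $\a(\e)\|F_{J,f}(\o,\cdot)\|_B\le \a(\e)(2/\e)\E\|F_{J,f}\|_B$; adjusting the numerical constant (one may replace $C(\e)$ by $2\a(\e)/\e$, still non-increasing), the bad event is contained in $E\times X$ together with the Fubini set above, so it has $(\P\otimes\m)$-measure at most $\e/2+\e^2/2\le\e$ (after relabelling $\e$, or simply running the argument with $\e/2$ throughout).

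Finally, let $B(x)=\P\{\o:\sup_{n\ge 1}|S_n(F_{J,f}(\o,\cdot))(x)|>C(\e)\,\E\|F_{J,f}\|_B\}$. The previous paragraph says $\int_X B(x)\,\dd\m(x)\le\e$ (or $\e^2$, depending on the bookkeeping). By Chebyshev again, the set
$$
X_{\e,J,f}=\{x\in X: B(x)\le \e\}
$$
satisfies $\m(X_{\e,J,f})\ge 1-\e$ when one starts from the bound $\int_X B\,\dd\m\le\e^2$; this is why it was convenient to produce an $\e^2$ rather than an $\e$ at the product-space stage, and it is exactly the reason for choosing $C_0(\a(\e))\le\e^2/2$ above. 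For $x\in X_{\e,J,f}$ we then have, by definition of $B(x)$, that $\P\{\o:\sup_{n\ge 1}|S_n(F_{J,f}(\o,\cdot))(x)|\le C(\e)\,\E\|F_{J,f}\|_B\}\ge 1-\e$, which is the claim.

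\textbf{Main obstacle.} The only genuinely delicate point is the justification of the Banach maximal inequality \emph{on the space $B$} with a single function $C_0$ independent of $f\in B$: one must make sure the hypothesis ``$\m\{\sup_n|S_n f|<\infty\}=1$ for all $f\in B$'' together with continuity in measure of $S_n\colon B\to L^0(\m)$ is enough to invoke Banach's principle in $B$ (this is standard when $B$ is an $F$-space, which it is here since $B\subset L^2(\m)$ is a Banach space, so it reduces to the classical statement). The rest is Fubini plus two applications of Chebyshev, with the numerical constants arranged so that an $\e^2$ appears at the product-measure stage and survives the final Chebyshev step; care is needed only in tracking these constants so that the stated measure bounds $1-\e$ and $1-\e$ come out simultaneously, which forces the choice $C(\e)=2\a(\e)/\e$ (or any fixed multiple), manifestly non-increasing in $\e$.
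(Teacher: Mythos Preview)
Your proposal is correct and follows essentially the same route as the paper: Banach principle on $B$, apply it to $F_{J,f}(\o,\cdot)$, Fubini, Chebyshev on $\|F_{J,f}\|_B$, then Chebyshev on $x\mapsto B(x)$. The only slip is in the constants: with your threshold $2/\e$ in the Chebyshev step you get $\P(E)\le \e/2$, hence $\int_X B\,\dd\m\le \e/2+\e^2/2$, which is \emph{not} $\le\e^2$ for small $\e$; the paper takes the threshold $2/\e^2$ instead, so $\P(E)\le\e^2/2$ and the sum is exactly $\e^2$, yielding $C(\e)=2\delta(\e)/\e^2$ rather than $2\alpha(\e)/\e$. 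With that single adjustment your argument matches the paper's proof line by line.
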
 

 \begin{proof} 
  By the Banach principle, there exists a non-increasing function
$\d:]0,1]\to \R_{+}$ such that
$$ \qquad \m\Big\{\sup_n|S_n(h)|\ge
\d(\varepsilon) \|h\|_{B } \Big\}\le \varepsilon^2/2, \qquad \quad \forall 0<\varepsilon\le 1,\ \forall h\in B. $$
   Let $f\in \mathcal R(B) $. Then by \eqref{tjr}, $F_{J,f}\in B $ almost surely.
   Taking $h=F_{J,f}$ and using   Fubini's theorem, gives
$$\int_X \P\Big\{ \sup_{n\ge 1}|S_n(F_{J,f}) |\ge
\d(\varepsilon)\|F_{J,f} \|_{B}\Big \} \ d\m \le \varepsilon^2/2.$$
Now we bound as follows 
\begin{eqnarray*}& &\int_X \P\Big\{ \sup_{n\ge 1}|S_n(F_{J,f}) |\ge
\frac{2\d(\varepsilon)}{\e^2} \E \|F_{J,f} \|_{B}\Big \} \ d\m\cr &\le &
\int_X \P\Big\{ \sup_{n\ge 1}|S_n(F_{J,f}) |\ge
\frac{2\d(\varepsilon)}{\e^2}  \E \|F_{J,f} \|_{B},   \|F_{J,f} \|_{B}\le \frac{2}{\e^2}\E \|F_{J,f} \|_{B}\Big \} \ d\m
\cr & & +   \P\big\{     \|F_{J,f} \|_{B}> \frac{2}{\e^2}\E \|F_{J,f} \|_{B}\big \}
\cr &\le &
\int_X \P\Big\{ \sup_{n\ge 1}|S_n(F_{J,f}) |\ge
 \d(\varepsilon)       \|F_{J,f} \|_{B} \Big \} \ d\m  +\varepsilon^2/2\ \le \ \varepsilon^2/2+\varepsilon^2/2\ = \ \e^2.\end{eqnarray*}
Hence,
 $$
  \mu\Big\{ x\in X: \P\Big\{\omega :\sup_{n\ge
1}|S_n(F_{J,f}(\omega,.))(x)\mid
\ge \frac{2\d(\varepsilon)}{\e^2}\E \|F_{J,f} \|_{B}\Big\} \ge
\e\Big\}\le \e,$$
or  $$
  \m\Big\{x\in X:\P\Big\{\omega :\sup_{n\ge
1}|S_n(F_{J,f}(\omega,.))(x)\mid
\le \frac{2\d(\varepsilon)}{\e^2}\E \|F_{J,f} \|_{B}\Big\} \ge
1-\e\Big\}\ge 1-\e.
 $$ By letting $C(\e) =\frac{2\d(\varepsilon)}{\e^2}$, we easily conclude.
  \end{proof}

\subsection{Some Gaussian tools.}    The next lemma is  well-known in the theory of Gaussian processes.  
 \begin{lemma}  \label{a4}
   Let $X=\{X_t, t\in T\}$ and $Y=\{Y_t, t\in T\}$ be two centered Gaussian processes  defined on  a finite set $T$.
\vskip 1 pt {\rm(a) [Slepian's Lemma]}      Assume that for any $s,t\in T$,
$$
 \|X_s-X_t\|_2 \le  \|Y_s-Y_t\|_2.
 $$
Then  for any     positive increasing convex
function $f$ on ${\mathbb{R}}^+$, $$\E f\big[\sup_{T\times
T}(X_s-X_t)\big] \le  \E  f\big[\sup_{T\times
T}(Y_s-Y_t)\big]  . $$
In particular,
 $$ \E \sup_{t\in T}X_t  \le  \E \sup_{t\in T}Y_t .   $$

\vskip 1 pt {\rm (b) [Sudakov's minoration]}   There exists a
universal constant $B$ such that for any Gaussian process
$X=\{X_t,t\in T\}$  
 $$ \E \sup_{t\in T}X_t\ge B\inf_{s,t\in T \atop
s\not= t}\|X_s-X_t\|_{2,{\bf P}}\sqrt { \log   \#(T)}. $$
\vskip 1 pt {\rm (c) [Lower bound for Gaussian norms]} Let  $X$ be a Gaussian vector and $N$ a non-negative semi-norm. Then 
$$ \P\{ N(X)<\infty\}=1 \qq \Rightarrow \qq \P\{N(X)\ge \frac{1}{2}\E N(X)\big\}\ge c$$
where $0<c<1$ is a universal constant. 
\vskip 1 pt {\rm (d) [Mill's ratio]}  The Mill's ratio 
$R(x) = e^{x^2/2}\int_x^\infty e^{-t^2/2}\ dt$  verifies   for any $x\ge 0$,
$$
{2\over \sqrt{x^2+4} +x}\le R(x) \le
{2\over \sqrt{x^2+{8\over \pi}} +x}\le \sqrt{{\pi\over 2}}.
 $$
It follows that for any standard Gaussian random
variable $g$, any $T>0$,
$$
\E g^2\chi\{ |g|\ge T \} \le 6e^{-T^2/4}.  
$$ 
\end{lemma}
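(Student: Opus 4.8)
Since Lemma \ref{a4} only gathers standard facts about Gaussian processes, the plan is to recall the classical proof of each item and to isolate the one step that carries the weight. For \emph{part (a)} I would run the Slepian--Fernique--Kahane interpolation. Put $X$ and $Y$ on a common probability space with $X$ independent of $Y$, and for $\vartheta\in[0,1]$ set $Z_t(\vartheta)=\sqrt{1-\vartheta}\,X_t+\sqrt\vartheta\,Y_t$, so that $Z(0)=X$ and $Z(1)=Y$. Fix a positive increasing convex $f$, put $G(z)=f\big(\max_{s,t\in T}(z_s-z_t)\big)$ on $\R^T$, and let $\psi(\vartheta)=\E\,G(Z(\vartheta))$; the target is $\psi'\ge 0$. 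After the usual regularisation --- replace $f$ by a $C^2$ increasing convex approximant and $\max$ by the smooth surrogate $\beta^{-1}\log\sum_s e^{\beta z_s}+\beta^{-1}\log\sum_t e^{-\beta z_t}$, letting $\beta\to\infty$ at the end --- differentiating under the expectation and integrating by parts in the Gaussian density yields
$$\psi'(\vartheta)=\frac12\sum_{t,u}\big(\E Y_tY_u-\E X_tX_u\big)\,\E\big[\partial_t\partial_u G(Z(\vartheta))\big].$$
Since $G$ depends on $z$ only through the differences $z_s-z_t$, every row of its Hessian sums to zero; writing $\E Y_tY_u-\E X_tX_u=\frac12(c_{tt}+c_{uu})-\frac12 d_{tu}$ with $d_{tu}=\|Y_t-Y_u\|_{2,\P}^2-\|X_t-X_u\|_{2,\P}^2\ge 0$ makes the first part disappear, leaving $\psi'(\vartheta)=-\frac14\sum_{t\ne u}d_{tu}\,\E[\partial_t\partial_u G(Z(\vartheta))]$. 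So everything reduces to the inequality $\partial_t\partial_u G\le 0$ for $t\ne u$ on the smoothed functional, and this is the step I expect to be the main obstacle: it is the familiar technical heart of Slepian-type comparisons, established by checking that, as $\beta\to\infty$, the mixed second derivatives of $G$ are dominated by the strictly negative term $-\beta(p_tp_u+q_tq_u)$ produced by the smooth surrogate. The ``in particular'' then follows on taking $f$ the identity --- legitimate because $\max_{s,t}(X_s-X_t)\ge 0$ --- together with $\E\max_{s,t}(X_s-X_t)=2\,\E\sup_tX_t$, valid since $-X$ has the law of $X$.

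\emph{Part (b)} I would deduce from (a) by comparison with an i.i.d.\ sequence. With $\d=\inf_{s\ne t}\|X_s-X_t\|_{2,\P}$ and $Y_t=(\d/\sqrt2)\,g_t$, $(g_t)_{t\in T}$ i.i.d.\ $\mathcal N(0,1)$, one has $\|Y_s-Y_t\|_{2,\P}=\d\le\|X_s-X_t\|_{2,\P}$, so (a) gives $\E\sup_tX_t\ge\E\sup_tY_t=(\d/\sqrt2)\,\E\max_{t\in T}g_t$; it then suffices to invoke the elementary lower bound $\E\max_{i\le N}g_i\ge c\sqrt{\log N}$, valid for all $N\ge 2$ with a universal $c>0$ (a second--moment estimate on $\sum_i\chi\{g_i>t\}$ with $t$ of order $\sqrt{\log N}$ will do), and take $B=c/\sqrt2$. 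For \emph{part (c)}, note first that $N(X)<\infty$ a.s.\ already forces $\E N(X)<\infty$ by Fernique's integrability theorem, so the statement is meaningful; then I would use that the law of $N(X)$ is log-concave (Pr\'ekopa--Borell applied to the Gaussian density and the convex sublevel sets $\{N\le t\}$), because for a non-negative log-concave random variable $W$ there is a universal $c_0>0$ with $\P\{W\ge\frac12\E W\}\ge\P\{W\ge\E W\}\ge c_0$ --- which is exactly the claim, with $c=c_0$. Alternatively one re-derives it from the Borell--Sudakov--Tsirelson concentration of $N(X)$ about its median together with the weak--variance bound $\sup_{N(\xi)\le1}\|\langle\xi,X\rangle\|_{2,\P}\le\sqrt{\pi/2}\,\E N(X)$, i.e.\ by the same mechanism behind \eqref{fer}.

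Finally, \emph{part (d)} is one--variable calculus. The function $R(x)=e^{x^2/2}\int_x^\infty e^{-t^2/2}\,dt$ satisfies $R'(x)=xR(x)-1$, and I would verify that $x\mapsto R(x)(\sqrt{x^2+4}+x)-2$ and $x\mapsto R(x)(\sqrt{x^2+8/\pi}+x)-2$ are monotone with the right sign, using the value of $R$ at $0$ (where $R(0)=\sqrt{\pi/2}$) and its behaviour at $+\infty$; this also yields $R(x)\le\sqrt{\pi/2}$. For the consequence, integrating by parts gives
$$\E\,g^2\chi\{|g|\ge T\}=\sqrt{\tfrac2\pi}\Big(Te^{-T^2/2}+\int_T^\infty e^{-t^2/2}\,dt\Big)=\sqrt{\tfrac2\pi}\,e^{-T^2/2}\big(T+R(T)\big)\le\sqrt{\tfrac2\pi}\,(T+\sqrt{\pi/2})\,e^{-T^2/2},$$
and since $\sup_{T\ge0}\sqrt{2/\pi}\,(T+\sqrt{\pi/2})\,e^{-T^2/4}<6$ --- a short computation, the maximiser being $T=\tfrac12(\sqrt{8+\pi/2}-\sqrt{\pi/2})$ --- one gets $\E\,g^2\chi\{|g|\ge T\}\le 6e^{-T^2/4}$. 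The only genuinely load-bearing point in the whole lemma is the Hessian--sign estimate in part (a); granting that classical fact, parts (b)--(d) are short deductions.
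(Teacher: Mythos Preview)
Your proposal is correct, and in fact it goes further than the paper itself: the paper gives no proof of Lemma~\ref{a4} at all. The lemma is introduced with the sentence ``The next lemma is well-known in the theory of Gaussian processes'' and is simply stated as a toolbox result, with the reader implicitly referred to the standard literature. There is therefore nothing to compare your argument against on the paper's side.

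That said, the sketches you give are the standard ones and are sound. The Slepian--Fernique interpolation for (a), the comparison with an i.i.d.\ Gaussian family for (b), log-concavity (or Borell concentration) for (c), and the integration-by-parts computation for (d) are exactly the proofs one finds in the references the paper cites (Dudley, Talagrand). Your identification of the Hessian-sign estimate in (a) as the only genuinely nontrivial step is accurate; everything else is a short deduction, which is precisely why the author felt free to omit any proof.
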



\section{Proofs.}\label{s5}
As clarified in Remark \ref{fjsplit},   we use the random elements $ F_{J,f} $ introduced in \eqref{fjgen}
 differently, according to the cases $2\le p\le \infty$, in which they are Gaussian, and $1<p<2$, where  we choose them  $p$-stable. This latter case only
concerns the proof of Theorem \ref{t6}. 
\subsection{Proof of Theorem \ref{gen}}    Let $0<\e<1/2$. Let $f\in \mathcal R(B)$.  By Lemma \ref{a3}, there exists a non-increasing function
$C:]0,1]\to \R_{+}$ and 
   a set
 $ X_{\varepsilon,J,f}$ of  measure greater than $  1-\e $ such that for all $x\in  X_{\varepsilon,J,f}$,
$$ \P\Big\{\omega:\sup_{n\ge 1}|S_n(F_{J,f}(\omega,.)(x)|\le
  C(\varepsilon)\E \| F_{J,f}\|_B   \Big\}\ge
1-\e. $$
  It follows from estimate \eqref{fer}  that
\begin{eqnarray*} \E \sup_{n\ge
1}|S_n(F_{J,f}(\omega,))(x)|& \le &{4C(\varepsilon)
  \over 1-\e}\  \E \| F_{J,f}\|_B, \qq\quad \forall x\in X_{\varepsilon,J,f}    . 
  \end{eqnarray*} 
  
Recall that $B\subset L^2(\m)$. Let $I$ be a finite set of integers such that
$\|S_n(f)-S_m(f)\|_{2 }\not= 0 $,  for all   $m,n\in
I$, $m\neq n$. By Lemma \ref{a2}-(ii), taking $ \mathcal J_0=\N $, there exists a sub-index
$\mathcal{ J}$ such that if
$$
A(I) =  \Big\{ \forall J\in \mathcal{ J}, \ \forall n,m\in I,\
m\not=n, \    { \|  S_n(F_{J,f})-S_m(F_{J,f}) \|_{2,\P}
\over   \| S_n(f)-S_m(f) \|_{2,\mu}} \ge \sqrt{1-\e}\,
\Big\},$$ 
 then   $\mu\left\{A(I)\right\} \ge  \sqrt{1-\e} $.

 By integrating on $ X_{\varepsilon,J,f}\cap A(I)$,  next using the fact that $\E \sup_{n \in I} S_n(F_{J,f})\ge 0$, and  Lemma \ref{a4}-(a),  we get  for any $J\in \mathcal{ J}$,
\begin{eqnarray*} \int_{X_{\varepsilon,J,f} } \E \sup_{n \in I} S_n(F_{J,f}) 
\dd\mu&\ge & \int_{X_{\varepsilon,J,f}\cap A(I)} \E \sup_{n \in I} S_n(F_{J,f})  \dd\mu
\cr & \ge & \sqrt{1-\e}\, \m\{ X_{\varepsilon,J,f}\cap A(I)\}\  \E  \sup_{n \in I} Z(S_n(f))
 \cr & \ge & \sqrt{1-\e}\,(    \sqrt {1-\e}-  \e)\  \E  \sup_{n \in I} Z(S_n(f))
  \cr & \ge &  (   1-2\e)\  \E  \sup_{n \in I} Z(S_n(f))      .
\end{eqnarray*}
  By combining, for any $J\in \mathcal{ J}$, 
 \begin{eqnarray}\label{comb1}
    \E  \sup_{n \in I} Z(S_n(f))
&\le & \frac{1}{  (1-2\e)}\,  \int_{X_{\varepsilon,J,f} } \E \sup_{n \in I} S_n(F_{J,f})  \dd\mu  
\ \le \  K(\e) \E \| F_{J,f}\|_{B}  , \end{eqnarray}
with  
$$K(\e)=\frac{4C(\varepsilon)
 }{  (1-2\e)( 1-\e)    }\,.$$
Therefore, for any $f\in \mathcal R(B)$, any finite set $I$,
$$\E \sup_{n\in I} Z(S_n(f)) \le K(\e) \inf_{J\in \mathcal J} \sup_{H\ge  J}  \E \| F_{H,f}\|_{B}  \ =\ K(\e) \,\limsup _{H\to \infty}
  \E \| F_{H,f}\|_{B} .$$
 Taking $I=[1,N]$ and letting next $N$ tends to infinity, gives
 \begin{eqnarray*} \E \sup_{n \ge 1} Z(S_n(f)) &\le &  K(\e) \,\limsup _{H\to \infty}  \E \| F_{H,f}\|_{B} .
  \end{eqnarray*} 
  
\subsection{Proof of Theorem \ref{t3}}        Let $f\in L^\infty(\m)$.  Fubini's theorem and Lemma \ref{a1}   allow us to write,\begin{align*}
\E \int|F_{J,f} |^pd\mu&\le C_p^p  \int
\big(\E |F_{J,f} |^2\big)^{p/2} d\mu  =
C_p^p\int \Big({1\over J}\sum_{j\le
J}T_jf^2(x)\Big)^{p/2}d\mu(x).
\end{align*}
 By assumption
$$
\lim_{J\to\infty}\Big\|{1\over J}\sum_{j\le J}T_jf^2-\|f\|^2_{2,\mu}\Big\|_{1,\m}=0.
$$
By proceeding by extraction,   this convergence  also holds almost surely along some subsequence $\mathcal{ J}_0$. As  ${1\over J}\sum_{j\le
J}T_jf^2(x)\le \|f\|_{\infty}^2$, we further deduce from the   dominated convergence theorem,
$$
\lim_{\mathcal{ J}_0\ni J\to \infty}\E \int\Big({1\over J}\sum_{j\le
J}T_jf^2(x)\Big)^{p/2}d\mu = \|f\|^p_{2,\mu}.
$$
Let $0<\e<1/2$. Extracting if necessary from $ \mathcal{ J}_0$ a sub-index which
we call again $ \mathcal{ J}_0$, we   thus conclude that
\begin{equation*}
\E
\|F_{J,f}\|_{p,\mu}\le (1+\e)C_p\|f\|_{2,\mu},\qq \qq \forall J\in  \mathcal{
J}_0.
\end{equation*}
Next the proof is exactly the same as before except that we replace everywhere the norm $\|.\|_{B}$ by the norm $\|.\|_{p,\mu}$. Let $I$ be a finite
set  of integers. From   Lemma \ref{a2}, we can extract from $ \mathcal{ J}_0$    a partial index ${\mathcal J}$ such that the analog of \eqref{comb1}
holds, namely 
  for any $J\in \mathcal{ J}$, 
 \begin{eqnarray*} 
    \E  \sup_{n \in I} Z(S_n(f))
&\le &   K(\e) \, \E \| F_{J,f}\|_{p,\mu} \ \le \ C_pK(\e)(1+\e)   \| f\|_{p,\mu}  . \end{eqnarray*}
It suffices now to give   an  explicit value to $\e$. A simple approximation argument     allows  to  get the same inequality for all $f\in L^2(\m)$.   
Sudakov's minoration further implies
$$\sup_{\varrho >0} \varrho \sqrt{  \log
N_f(\varrho)}\ \le \ C_pK(\e)(1+\e)   \| f\|_{2, \m} .$$

\subsection{Proof of Theorem \ref{t5}}   (i)    By Lemma \ref{a2}-(b), given any index $\mathcal{ J}_0$,  there exists an index
$\mathcal{ J}\subseteq \mathcal{ J}_0$ such that for any $J\in \mathcal{ J}$,
$$
(1-\e) \E \sup_{n\in I} Z(S_n(f))
\le \E \int \sup_{n\in I}  S_n(F_{J,f})  \ d\mu .
$$ 
 And for   any
positive increasing convex function   $G$ on ${\mathbb{R}}^+$, any $J\in \mathcal J$,
\begin{eqnarray*}    \E    G\Big( \sqrt{1-\e}  \sup_{n,m\in I}
Z(S_n(f))-Z(S_m(f))\Big)
 & \le &    \E \int_{X} G\big(\sup_{n,m\in I} (S_n-S_m)(F_{J,f}) \big)
d\mu.  \end{eqnarray*}
 
In the following calculation we put
$$L= \sup_{
   \|g\|_{B}\le 1  }\int
\sup_{n\in I} |S_n(g)| \ d\mu, $$
and we let     $u_0=0$, $u_n= \e(1+\e)^{n-1}$
$n\ge 1$. Then
\begin{eqnarray*} 
& &\E \int \sup_{n\in I} \left| S_n(F_{J,f})\right|  \ d\mu  =\sum_{k=1}^\infty \E
\left({\bf 1}_{u_{k-1}\le \|F_{J,f}\|_{B}<u_k} \cdot \int
\sup_{n\in I} \left| S_n(F_{J,f})\right| \ d\mu \ \right)
 \cr &  \le &\sum_{k=1}^\infty  \P\big\{ u_{k-1}\le \|F_{J,f}\|_{B}<u_k\big\} \sup_{u_{k-1}\le
\|g\|_{B}<u_k  }\int \sup_{n\in I} |S_n(g)| \ d\mu
 \cr &  \le &
L\sum_{k=1}^\infty
u_k\P\big\{u_{k-1}\le \|F_{J,f}\|_{B}<u_k\big\}
 \cr &  \le & L\big( u_1\P \{  \|F_{J,f}\|_{B}<u_1 \} +
(1+\e)\E   \|F_{J,f}\|_{B} \cdot {\bf 1}_{u_1\le
\|F_{J,f}\|_{B} }\big)
 \cr &  \le & L  ( \e +
(1+\e)\E    \|F_{J,f}\|_{B} )
 . \end{eqnarray*}

 By combining, and letting  next $\e$ tends  to $0$, we get for any $f\in
L^2(\m)$,
$$
\E \sup_{n\in I} Z(S_n(f)) \le    \E    \|F_{J,f}\|_{B}  \sup_{
   \|g\|_{B}\le 1  }\int
\sup_{n\in I} |S_n(g)| \ d\mu.$$
Similarly
\begin{align*} \qq     \E    G\Big( \sqrt{1-\e}  \sup_{n,m\in I}
\big|Z(S_n(f))-Z(S_m(f))\big|\Big)\qq\qq\qq\qq \qq\qq\qq\qq 
 \cr      \le     \E    \|F_{J,f}\|_{B}  \sup_{
   \|g\|_{B}\le 1  }\  \E \int_{X} G\big(\sup_{n,m\in I} \big|(S_n-S_m)(g) \big|\big)
d\mu.   \end{align*}
(ii) Let $B= L^p(\m)$. We have seen that there exists an index $ \mathcal{ J}_0$ such that \begin{equation*}
\E
\|F_{J,f}\|_{p,\mu}\le (1+\e)C_p\|f\|_{2,\mu},\qq \qq \forall J\in  \mathcal{
J}_0.
\end{equation*}
Therefore, by letting $J$  tend to infinity along $\mathcal{
J}_0$, next $\e$ tend to zero, we get
 $$
\sup_{\|f\|_{2,\mu}\le 1}\E \sup_{n\in I} Z(S_n(f)) \le    C_p     \sup_{
   \|g\|_{p,\mu}\le 1  }\int
\sup_{n\in I} |S_n(g)| \ d\mu.
$$
And 
\begin{align*} \qq  \sup_{\|f\|_{2,\mu}\le 1}   \E    G\Big(   \sup_{n,m\in I}
\big|Z(S_n(f))-Z(S_m(f))\big|\Big)\qq\qq\qq\qq  
 \cr      \le    C_p \sup_{
   \|g\|_{p,\m}\le 1  }\  \E \int_{X} G\big(\sup_{n,m\in I} \big|(S_n-S_m)(g) \big|\big)
d\mu.   \end{align*}

 
\subsection{Proof of Theorem \ref{t6}}      Let  $f \in L^p(\mu)$. Let $J $ be any positive integer   
    and $x\in X$. By \eqref{fjgen1},
$$
{1\over {J^{1\over p}}}\sum_{j\leq J}\theta_j T_jf(x)
\buildrel {\mathcal{ D}}\over {=} \theta_1 \Big({1\over J}\sum_{j\leq
J}|T_jf(x)|^p \Big)^{1/ p}.
$$
 Thus for any $r<p$,
$$
 \E\,   |F_{J,f}  (x)|^r = ( \E\,  |\theta_1|^r)
 \Big({1\over J}\sum_{j\leq J}|T_jf(x)|^p\Big)^{r/ p} .
$$ 
By Corollary \ref{c0},  $ |T_jf(x) |^p\, \buildrel{\rm a.e.}\over{=} \,  T_j|f |^p  (x) $, 
so that  we have \begin{equation} \label{r}
 \E\,   |F_{J,f}  (x)|^r = ( \E\,  |\theta_1|^r)
 \Big({1\over J}\sum_{j\leq J} T_j | f  |^p(x)\Big)^{r/ p} .
\end{equation} 
  for almost all  $x$  and   all $J\ge 1$.  As trivially $  T_j | f  |^p\in L^1(\m)$,
  we deduce
\begin{equation} \label{r}
 \E\,  \int_X |F_{J,f}  (x)|^r\dd \m(x)  = ( \E\,  |\theta_1|^r)\int_X 
 \Big({1\over J}\sum_{j\leq J} T_j | f  |^p(x)\Big)^{\frac{r}{p}} \dd \m(x).
\end{equation}   
Hence,
  \begin{align*}   \Big|\E\,  \int_X |F_{J,f}  (x)|^r\dd \m(x)-& ( \E\,  |\theta_1|^r)\|f\|_{p,\m}^r \Big| 
\cr  = \ &
( \E\,  |\theta_1|^r)\Big|\int_X 
 \Big({1\over J}\sum_{j\leq J} T_j | f  |^p(x)\Big)^{\frac{r}{p}} \dd \m(x) -  (\|f\|_{p,\m}^p)^{\frac{r}{p}} \Big|
\cr \le \ &
( \E\,  |\theta_1|^r) \int_X 
\Big| \Big({1\over J}\sum_{j\leq J} T_j | f  |^p(x)\Big)^{\frac{r}{p}}   -  (\|f\|_{p,\m}^p)^{\frac{r}{p}} \Big|\dd \m(x) 
\cr \le \ &
( \E\,  |\theta_1|^r) \int_X 
\Big|  {1\over J}\sum_{j\leq J} T_j | f  |^p(x)     -   \|f\|_{p,\m}^p   \Big|^{\frac{r}{p}}\dd \m(x) 
\cr \le \ &
( \E\,  |\theta_1|^r) \Big(\int_X 
\Big|  {1\over J}\sum_{j\leq J} T_j | f  |^p(x)     -   \|f\|_{p,\m}^p   \Big| \dd \m(x)\Big)^{r/p}
\cr        &\!\! \to 0 ,\end{align*}
as $J$ tends to infinity  by  assumption (C).
Therefore, $$
\lim_{J\to\infty}   \E\,  \int_X |F_{J,f}  (x)|^r\dd \m(x)= ( \E\,  |\theta_1|^r)\|f\|_{p,\m}^r   , \qq \quad   \forall
0<r<p.
$$
  By using H\"older's
inequality, we deduce that 
 \begin{equation}\label{fr}
\E
\|F_{J,f}\|_{r,\mu}\le \Big(\E \int    |F_{J,f}  (x)|^r \dd  \mu\Big)^{1/r}\le 2\|\theta_1\|_r \, \|f\|_{p,\mu}, 
\end{equation}
for all   $J\ge J_0$, say.  
\vskip 3 pt 
 By assumption, property ($\mathcal B_r$) holds for some $1<r<p$.   From Lemma \ref{a3} follows that there exists a non-increasing function
$C:]0,1]\to \R_{+}$ such that for any $f\in L^r(\m)$, for  any $J\ge 1$, any $0<\e<1 $, there exists 
   a mesurable set
 $ \widetilde X=  \widetilde X_{\varepsilon,J,f}$ of  measure greater than $  1-\sqrt{\varepsilon} $, such that for all $x\in  \widetilde X$,
 \begin{equation}\label{maxr} \P\big\{\omega:\sup_{n\ge 1}|S_n(F_{J,f}(\omega,.)(x)|>  C(\varepsilon) \| F_{J,f}\|_{r,\mu}   \big\}\le \e. 
\end{equation} 
We assume   $0<\e<1/6$ in what follows.   
  Let $\d(\e)=C(\e)/\e$. Let also $x\in \widetilde X$, $J\ge J_0$. Using   Chebyshev's inequality and \eqref{fr}, we get
     \begin{eqnarray*} & &\P\big\{\omega:\sup_{n\ge 1}|S_n(F_{J,f}(\omega,.)(x)|> 2 \d(\e) \|\theta_1\|_r \, \|f\|_{p,\mu}  \big\}  \cr &\le & \ \P\big\{\omega:\sup_{n\ge 1}|S_n(F_{J,f}(\omega,.)(x)|>  \d(\e)\E \| F_{J,f}\|_{r,\mu}   \big\} \le \  \P\big\{\| F_{J,f}\|_{r,\mu}> \E \| F_{J,f}\|_{r,\mu}/  \e\big\}
 \cr & &\quad + \P\big\{\omega:\sup_{n\ge 1}|S_n(F_{J,f}(\omega,.)(x)|>  \d(\e)\E \| F_{J,f}\|_{r,\mu}  , \| F_{J,f}\|_{r,\mu}\le  \E \| F_{J,f}\|_{r,\mu}/  \e \big\}
 \cr &\le &   \e + \P\big\{\omega:\sup_{n\ge 1}|S_n(F_{J,f}(\omega,.)(x)|>  C(\varepsilon)    \| F_{J,f}\|_{r,\mu}  \big\} \cr &\le & 2\e. \end{eqnarray*} 
   
   Therefore,    \begin{equation}\label{maxr1}
\P\big\{\omega:\sup_{n\ge 1}|S_n(F_{J,f}(\omega,.)(x)|\le   2\d(\e) \|\theta_1\|_r \, \|f\|_{p,\mu}  \big\}\ge 1-2\e, \qq \quad \forall x\in  \widetilde X ,  \ \forall J\ge J_0 .  
\end{equation}

    \vskip 2 pt

Let $\d$ be some fixed positive real. Let $I$ be a finite  set of positive integers and let  $M=\#\{I\}$. Assume that 
   $ \|S_n(f)- S_m(f) \|_{p,\mu}\ge \d$ if $n\neq m$, $n,m\in I$.  
   By Lemma \ref{a2}-(i),    there exists an index
$\mathcal J  $ and a measurable set $A =A_{\varepsilon,I,f}$  such that   $\mu \{A \} \ge  1-\e$, and further,   for all $x\in
A $, the following inequalities  $$ 
   (1-\e)^{1/p} \|\theta_1\|_r\,\|S_n(f)-S_m(f)\|_{p,\mu} \le      \big\| (S_n-S_m)(F_{J,f} )  (x)\big\|_{r, \P}
 \le  (1+\e)^{1/p}\|\theta_1\|_r\,\|S_n(f)-S_m(f)\|_{p,\mu}, $$  
are satisfied  for all $J\in \mathcal J$,  all $n,m\in I$ and all $ r<p$. Set 
$$Y=Y_{\varepsilon,I,J,f}= \widetilde X\cap A  .$$

For each $x$ fixed, the process
 $$ S_{J,f,x}(\o,  n )= {1\over {J^{1/ p}}}
\sum_{j\le   J}\theta_j(\o) T_j  S_n(f)(x) , \qq  \quad  n\ge 1,
$$
is a $p$-stable random function. Further, the process 
   $$
 \mathcal S_{J,f,x}  (\omega',\omega '',n)=\frac{1}{J^{1/p}} 
 \sum_{1\le j\le  J} \eta_j(\omega ' )g_j(\omega'')T_jS_nf(x) ,   \qq \quad n\ge 1
 $$
 has the same distribution as  $\{S_{J,f,x}(., n), n\ge 1\}  $.       Recall (sub-section \ref{secstab}) that we   have underlying     joint probability spaces
 $(\Omega',\mathcal{ A'},{\bf P'})$  and $(\Omega '',\mathcal{ A} '',{\bf P} '')$ on which the sequence   
$\{\eta_j,j\ge 1  \}$  and the sequence $ \{g_j, j\ge 1 \}$   
  of i.i.d. Gaussian standard random variables are respectively defined. Here we take both sequences infinite. 
\vskip 2 pt
Thus  \eqref{maxr1} reads:  for all $x\in \widetilde X $, and all $  J\ge J_0$,
\begin{eqnarray} \label{maxr2}
  \P' \times \P''\Big\{(\omega',\omega ''): \sup_{n\ge 1} |\mathcal S_{J,f,x}(\omega',\omega '',n)|\le   2\d(\e) \|\theta_1\|_r \, \|f\|_{p,\mu} 
 \Big\} \ge 1-2 \varepsilon .  
\end{eqnarray}
Let 
$$ H(\o')= \P''\big\{ \omega '':    \sup_{n\ge 1} |\mathcal S_{J,f,x}(\omega',\omega '',n)|\le   2\d(\e) \|\theta_1\|_r \, \|f\|_{p,\mu}  \big\}.$$
By Fubini's theorem, the left-term in \eqref{maxr2} also writes
\begin{eqnarray*} 
 \int_{\O'} H(\o') d\P'(\o')&=& \int_{\o': H(\o')\le \varepsilon } H(\o') d\P'(\o')+ \int_{\o': H(\o')> \varepsilon } H(\o') d\P'(\o')
\cr & \le & \varepsilon + \P'\{\o': H(\o')>\varepsilon  \}.  
\end{eqnarray*}Hence
\begin{eqnarray} \label{maxr3}
  \P' \Big\{ \omega ':
\P''\big\{ \omega '':   \sup_{n\ge 1} |\mathcal S_{J,f,x}(\omega',\omega '',n)| \le
2\d(\e) \|\theta_1\|_r \, \|f\|_{p,\mu}  \big\} \ge   \varepsilon\Big\} \ge 1-3  \varepsilon.
 \end{eqnarray}
 
 For each fixed $\o'\in \O'$,  $\{\mathcal S_{J,f,x}(\omega',.,n), n\ge 1\}$ is a Gaussian process. 
\noi Let $  \E_{ \P''}$ denote   the expectation symbol with
respect to   $\P''$. By using estimate \eqref{fer}, for every $x\in X_{\varepsilon,J,f}$,
\begin{equation}\label{gaussest}  1-3  \varepsilon \leq {\P} '\Big\{ \omega ':
   \E_{ {\P''} }\, \sup_{n\ge 1} |\mathcal S_{J,f,x}(\,\cdot,\omega ',x))|
  \le \textstyle {8\d(\e)  \over \e}\|\theta_1\|_r
\|f\|_{p,\mu}  \Big\}. 
\end{equation} 
 Write for a while 
\begin{eqnarray*}D(\omega ,n,m)&=  &D_{J,f,x}(\omega ,n,m)\ =\ S_{J,f,x}(\omega ,n)- 
S_{J,f,x}(\omega ,m) 
\cr  \mathcal D(\omega',\omega '',n,m) &=  &\mathcal D_{J,f,x}(\omega',\omega '',n,m)\ =\  \mathcal S_{J,f,x}(\omega',\omega
'',n)-\mathcal S_{J,f,x}(\omega',\omega '',m) 
\cr \D(  n,m) &=  &\D_{J,f,x}(  n,m)\ =\ \Big(\frac{1}{J} \sum_{1\le j\le J} \big|T_j(S_n
-S_m)f(x)\big|^p\Big)^{1/p} .
\end{eqnarray*}
By   \eqref{sumstable}, 
\begin{equation*}  
 \E_{\P'}\E_{\P''} e^{it \mathcal D(\omega',\omega '',n,m)} = \E_\P e^{it (S_{J,f,x}(. ,n)- 
S_{J,f,x}(.,m))} =  \E_\P
e^{it \theta_1 \D(  n,m)}= e^{-|t|^p\D(  n,m)^p }.\end{equation*}   
As  $\E e^{it g}= e^{-t^2\tau^2/2}$ where 
$\tau =  (\E g^2)^{1/2}$, we get  
  from \eqref{vast}, 
\begin{equation*} 
 \E_{\P'}\E_{\P''} e^{it \mathcal D(\omega',\omega '',n,m)} = \E_{\P'}e^{-t^2\|\mathcal D(\omega',.,n,m)\|^2_{2,\P''}/2}  = e^{-|t|^p\D(  n,m)^p
}.\end{equation*}   
 Put for each $\o'\in \O'$, 
$$ d_{J,\omega ',x}(n,m)=\|\mathcal D_{J,f,x}(\omega',.,n,m)\|_{2,\P''}. 
$$
And let 
$$ d_{J, x}(n,m)= \big(\frac{1}{J} \sum_{j\le J} T_j|S_n(f)-S_m(f)|^p(x)\big)^{1/p}.  $$ 
We note that $ d_{J, x}(n,m)= \D_{J,f,x}(  n,m)$ for almost all $x\in X$. Further
\begin{equation*} 
   \E_{\P'}e^{-t^2d_{J,\omega ',x}(n,m)^2  /2}  = e^{-|t|^pd_{J, x}(n,m)^p
}.\end{equation*} 
Then
 \begin{eqnarray*}\P\big\{ \exists n,m\in I:  d_{J,\omega ',x}(n,m)<\e d_{J, x}(n,m)\big\}&\le & \sum_{n,m\in I}\P\big\{  
 e^{-t^2  d_{J,\omega ',x}(n,m)/2}>e^{-t^2\e^2 d^2_{J, x}(n,m)}\big\}
\cr &\le & M^2 e^{ t^2\e^2 d^2_{J, x}(n,m)- |t|^pd_{J, x}(n,m)^p}, \end{eqnarray*}
and so,\begin{eqnarray*}\P\big\{ \exists n,m\in I:  d_{J,\omega ',x}(n,m)<\e d_{J, x}(n,m)\big\}&\le &  M^2\inf_{t>0}\, e^{ t^2\e^2 d^2_{J,
x}(n,m)- |t|^pd_{J, x}(n,m)^p} . \end{eqnarray*} 
 The function $\p(t)= e^{t^2a-t^pb}$ has an extremum at the value $t^*=  \big(\frac{pb}{2a}\big)^{\frac{1}{2-p}}$, and 
$$\p(t^*)=
\exp\big\{a^{-\frac{p}{2-p}}b^{ \frac{2}{2-p}}\big[(\frac{p }{2} )^{\frac{2}{2-p}}-(\frac{p }{2} )^{\frac{p}{2-p}}\big]\big\}.$$
Applying this with $a=\e^2 d^2_{J, x}(n,m) $, $b=d_{J, x}(n,m)^p$, we get
\begin{eqnarray*}& &\P\big\{ \exists n,m\in I:  d_{J,\omega ',x}(n,m)<\e d_{J, x}(n,m)\big\}\cr &\le &  M^2\exp\big\{ \e  
 ^{-\frac{2p}{2-p}}(  d _{J, x}(n,m))^{-\frac{2p}{2-p}}d_{J, x}(n,m)^{
\frac{2p}{2-p}}\big[(\frac{p }{2} )^{\frac{2}{2-p}}-(\frac{p }{2} )^{\frac{p}{2-p}}\big]\big\}
\cr &:=&M^2\exp\big\{- \e   ^{-\frac{2p}{2-p}} C(p)\big\} . 
\end{eqnarray*} 
with $C(p)=  (\frac{p }{2} )^{\frac{p}{2-p}}- (\frac{p }{2} )^{-\frac{2}{2-p}}  >0$.
 Choose $\e=(\tau\log M)^{-\frac{2-p}{2p} }$. We get
\begin{eqnarray} \label{minoration}\P\big\{ \exists n,m\in I:  d_{J,\omega ',x}(n,m)<(\tau\log M)^{-\frac{2-p}{2p} } d_{J, x}(n,m)\big\} 
&\le &M^{2-\tau C(p)} \le\frac{1}{2} , \end{eqnarray} 
for $\tau = \tau(p)$ depending on $p$ only, and small enough.

 Now if 
$x\in Y $, we have  
$$ 
        \big\| (S_n-S_m)(F_{J,f} )  (x)\big\|_{r, \P'\times \P''}
 \ge c(\e, r,p)\,\|S_n(f)-S_m(f)\|_{p,\mu}  ,     $$ 
 for all $J\in \mathcal{ J}$,     all $n,m\in I$,   $m\not=n$,  and all $ r<p$. 
As 
 $(S_n-S_m)(F_{J,f} )  (x)\buildrel{\mathcal D}\over{=} \mathcal (\mathcal S_{J,f,x}( n)-(\mathcal S_{J,f,x}( m)) $,
we have 
$$      \big\| (S_n-S_m)(F_{J,f} )  (x)\big\|_{r, \P'\times \P''}= \|\theta_1\|_{r}\, d_{J, x}(n,m),$$ 
whence 
\begin{eqnarray} \label{metr.comp}
      d_{J, x}(n,m)
& \ge & c(\e, r,p)\,\|S_n(f)-S_m(f)\|_{p,\mu} ,   
 \end{eqnarray}
 for all $J\in \mathcal{ J}$,     all $n,m\in I$,   $m\not=n$. 

  Putting together \eqref{minoration}  and \eqref{gaussest} implies  that there exists a measurable set   $\Omega '_0$ with 
  ${\P} ' (\Omega '_0 )>0$,    such that for any $\omega '\in \Omega '_0$, and all $n,m\in I$,   
$$
 d_{J,\omega ',x}(n,m)\ge c(\e, r,p)\frac{d_{J, x}(n,m)}{(\log \#\{I\})^{1/p-1/2}}\ge c(\e, r,p) \frac{\d}{(\log \#\{I\})^{1/p-1/2}}  $$
By Sudakov's inequality,  
\begin{equation}\label{sud}  \|f\|_{p,\mu}  \ge  c(r,p)   \E_{ {\P''} }\, \sup_{n\in I} |\mathcal S_{J,f,x}(\,\cdot,\omega ',x))|\ge  c(r,p) \d  (\log
\#\{I\})^{1/2+1/2 -1/p}.
\end{equation}
 A routine argument together with \eqref{gaussest} now easily    leads to 
 $$
 \|f\|_{p,\mu}
\ge  c(r,p)\, \sup_{\d>0}\d \big(\log N^p_f(\d)  \big)^{1/ q},
 $$
where  $ c(r,p)>0$ depends on $r$ and $p$ only. It is only at this last stage that the fact that $p>1$ is necessary.

\vskip 5 pt

\subsection{Proof of Theorem \ref{t2}}  
Let $f\in L^\infty(\mu)$ such that $\|f\|_{2, \m}=1$.  Let $I$ be a finite subset of $\N$ and let $M= \#\{I\}$.  
 Write for a while $N=N_\o=|S_n(F_{J,f}(\o,.))|$, $\b(\o)=\m \big\{ x: N_\o(x)\ge \frac{1}{2}\E N_\o(x)\big\} $. By Lemma \ref{a4}-(c),   for each $x$,
 $$ \P\big\{N_\o(x)\ge \frac{1}{2}\E N_\o(x)\big\}\ge c.$$
 And so, 
$$ \m\otimes\P\big\{(\o, x): N_\o(x)\ge \frac{1}{2}\E N_\o(x)\big\}  \ge  c.$$
  We have  
$$ c\le \E \b= \E \b\big(\chi_{\{\b \ge c/2\}}+\chi_{\{\b \le c/2\}} \big)\le c/2+\P\{ \b\ge c/2\}.   $$
Hence $\P\{ \b\ge c/2\}\ge c/2$, and using the previous notation, we  deduce that for each $J\ge 1$,  there exists a measurable set $D_J$ of probability  larger than $c/2$, such that we have 
\begin{eqnarray*}  \m \big\{x: |S_n(F_{J,f}(\o,.))(x)|\ge \frac{1}{2}\E |S_n(F_{J,f})(x)|\big\}\ge   c /2  , \qq \forall \o\in D_J. 
\end{eqnarray*} 
 
  Let $0<\gamma <1$ be fixed.  By   Lemma \ref{a2}-(ii), there exists an index
$\mathcal J $ and a measurable set $A $  with  $\mu \{A \} \ge  \g^2$, and such that for all   $x\in A $, we have 
\begin{eqnarray}\label{debut} 
\gamma \E  \sup_{n\in I} Z(S_n(f)) &\le &\E \int \sup_{n\in I}
S_n(F_{J,f})\ d\mu \qq \quad \forall J\in \mathcal{ J}.  
\end{eqnarray} 
\noi
 Hence,
\begin{eqnarray*}
*\label{debut1} 
\m \big\{x: |S_n(F_{J,f}(\o,.))(x)|\ge \frac{\gamma}{2}  \E  \sup_{n\in I} Z(S_n(f))\big\}\ge   c /3  , \qq \forall \o\in D_J,
\end{eqnarray*}
  assuming $\g$ sufficiently close to 1 and   all $J\in \mathcal{ J}$ greater than some sufficiently large number, which we do. 
  
  We simplify the notation  in what follows and   write   
$F_J=F_{J,f}.$ Put  for any $A>0$,
$$
E_A=\big\{(\omega,x)\in \Omega\times X  :  |F_J(\omega,x)|\le
A\big\} ,\qq \qq   E_{A,\omega}= \big\{x\in X  :  (\omega,x)\in
E_A\big\},
$$
 and let  for any $ \omega \in \Omega$, $x\in X$,
$$ F_{A,J}(x)=F_{A,J,\omega}(x)=F_J(\omega,x)\cdot{\bf 1}_{E_{A,\omega}}(x), \qq F^{A,J}(x)=F^{A,J,\omega}(x)=F_J(\omega,x)\cdot{\bf 1}_{E_{A,\omega}^c}(x).
 $$
  Obviously,
\begin{equation}\label{basic}\E  \int \sup_{n\in I}
 S_n(F_{J,f})\ d\mu  \le \E \int \sup_{n\in I}|S_n(F^{A,J })|\
 d\mu + \E  \int \sup_{n\in I}|S_n(F_{A,J })|\
 d\mu. 
 \end{equation}
By definition $F_{A,J,\omega}(\,\cdot\,)$ (resp.\
$F^{A,J,\omega}(\,\cdot\,)$) is $\mathcal{ A}$-measurable. As $f\in
L^\infty(\mu)$, we have
$$
\P\big\{ \omega    :    F_{A,J,\omega}(\,\cdot\,)\  {\rm
and}\  F^{A,J,\omega}(\,\cdot\,)   \in L^\infty (\mu)\big\}=1.
$$
As $\max_{i\le n} x_i\le (\sum_{i\le n} x_i^2)^{1/2}$ for any nonnegative real numbers, by using twice Cauchy-Schwarz's inequality, next Fubini's
inequality, we get
 \begin{eqnarray}\label{fa}\E \int\! \sup_{n\in I}|S_n(F^{A,J })|\ d\mu & \le &\E \Big(
\sum_{n\in I} \int\! |S_n(F^{A,J })|^2\ d\mu \Big)^{{1/ 2}}\! \cr&\le&  \Big(
\sum_{n\in I} \int\! \E |S_n(F^{A,J })|^2\ d\mu \Big)^{{1/ 2}}  
  \, \le \,
\sqrt{M}  \, \E \|F^{A,J }\|_{2,\mu} .  
\end{eqnarray}
We have to estimate $\|F^{A,J}\|_{2,\mu}$. By   Fubini's
theorem, next Lemma \ref{a4}-(d)  applied with $g=F_{J,f}/\|F_{J,f}\|_{2,\P}$ and $T= A/\|F_{J,f}\|_{2,\P}$, it follows that
\begin{eqnarray*}
 \E\  \|F^{A,J}\|_{2,\mu}^2&=&
 \int_X
 \E\  |F_{J,f}(x)|^2\cdot {\bf 1}_{(|F_{J,f}(x)|\ge A)}\dd \mu(x)\cr 
 &\le &    6\int_X
\|F_{J,f}(x)\|_{2,\P}^2\
\exp\Big\{-{A^2\over 4\|F_{J,f}(x)\|_{2,{\P}}^2}\Big\}
\dd \mu(x) .
\end{eqnarray*}
 We have $\|F_{J,f}(x)\|_{2,\P}^2 \buildrel{{\rm a.e.}}\over{=}{1\over J} \sum_{j\le J}T_j(f^2)(x)$. By assumption (C), ${1\over J}\sum_{j\le
J} T_jf^2 $ converges to $1$ in $L^1(\mu)$, along some subsequence extracted from   $\mathcal J$, we can make this convergence almost everywhere too.
The requirement that $f\in L^\infty(\m)$, together with the  dominated convergence  theorem, then implies that
$$
 \int_X
\|F_{J,f}(x)\|_{2,\P}^2\
\exp\Big\{-{A^2\over 4\|F_{J,f}(x)\|_{2,{\bf P}}^2}\Big\}
\dd \mu(x) \to \exp\{- A^2 / 4\},
$$
along this index. 

    Extracting again if necessary  we obtain that 
 $
    \E \|F^{A,J}\|_{2,\mu}^2\le
 2 \exp\{- A^2 / 4\}, 
  $
along some index, which we still denote by $\mathcal{ J}$. Choose now $A= \sqrt{ 8\log M}$. We get 
 \begin{eqnarray}\label{estFA}
 \E \int\! \sup_{n\in I}|S_n(F^{A,J,\omega})|\ d\mu    \le   9\, \sqrt{  M} \,\exp\{- A^2 / 8\}\le      9\, M^{-1/2}  .  \end{eqnarray}
   Assume  that 
  \begin{equation}\label{delta} \min_{n,m\in I\atop n\neq m} \|S_n(f)- S_m(f) \|_{2,\mu}\ge \d   .\end{equation}
    Using Lemma \ref{a4}-(b), we get  
 \begin{eqnarray*}
 \label{debut1} 
\m \big\{x: \sup_{n\in I}|S_n(F_{A,J,\omega})(x)|\ge \frac{\gamma B\d}{2} \sqrt{\log M}- 9\, M^{-1/2}\big\}\ge   c /3  , \qq \forall \o\in D_J,
\end{eqnarray*}
  for all $J\in \mathcal{ J}$. 
  Let 
  $$\phi_{I,J,\omega}= \frac{F_{A,J,\omega}}{A}.$$ 
It follows that 
\begin{eqnarray}\label{debut2} 
\m \big\{x: \sup_{n\in I}|S_n(\phi_{I,J,\omega})(x)|\ge c'\d  \big\}\ge   c /3  , \qq \forall \o\in D_J,\end{eqnarray}
where $c'$ is a positive universal constant.
  Suppose that  for some $\d>0$,  $C(\delta) =\infty$.
 This means that we can select sets $I$ verifying \eqref{delta} with cardinality $M$ as large as we wish.  But 
 \begin{eqnarray}d(S^*(\phi_{I,J,\omega}),0)&\ge &d(\sup_{n\in I}|S_n(\phi_{I,J,\omega}),0)\cr &\ge & \int _{\sup_{n\in I}|S_n(\phi_{I,J,\omega})\ge  c'\d}\frac{\sup_{n\in I}|S_n(\phi_{I,J,\omega})|}{1+\sup_{n\in I}|S_n(\phi_{I,J,\omega})|}\dd \m
 \cr &\ge & (c/3)\frac{c'\d}{1+c'\d}  .
  \end{eqnarray}
 And we have 
 $$\E \|\phi_{I,J,\omega}\|_{2, \m}^2
  \le \frac{1}{ { 8\log M}}\E \int |F_{j,f}|^2 \dd \m\le \frac{1}{ { 8\log M}}.$$
Hence on a subset $D'_J$ of $D_J$ of positive measure, we have 
 $$ \|\phi_{I,J,\omega}\|_{\infty, \m}\le 1, \qq  \|\phi_{I,J,\omega}\|_{2, \m}\le K/\sqrt{  \log M}.$$
And $K$ depend on $c$ only.  Picking $\o$ in $D'_J$, $J$ varying, we deduce that $S^*$ cannot be continuous at $0$. 
  Hence a contradiction with \eqref{0cont}. This achieves the proof.   
\subsection{Proof of Theorem \ref{t7}} We start as in the proof of Theorem \ref{t2}. By using exactly the same arguments for proving  \eqref{fa}, 
we get here  \begin{eqnarray*}
\E \int\! \sup_{n\in I}|S_n(F^{A,J })|\ d\mu & \le &\E \Big(
\sum_{n\in I} \int\! |S_n(F^{A,J })|^2\ d\mu \Big)^{{1/ 2}}\! \cr&\le&  \Big(
\sum_{n\in I} \int\! \E |S_n(F^{A,J })|^2\ d\mu \Big)^{{1/ 2}}  
  \, \le \,
\sqrt{M} S_1(I)\, \E \|F^{A,J }\|_{2,\mu} .  
\end{eqnarray*} 
Next estimate \eqref{estFA} is modified as follows.
 Let  $\a>1$ be some fixed real.  By extracting   we obtain that 
 $
    \E \|F^{A,J}\|_{2,\mu}^2\le
 \a \exp\{- A^2 / 4\}, 
  $
along some index,   still denoted $\mathcal{ J}$. Thus with \eqref{fa},
   \begin{equation}\label{estFA1}
 \E \int\! \sup_{n\in I}|S_n(F^{A,J,\omega})|\ d\mu     \le \sqrt{M} S_1(I)\,\E \|F^{A,J }\|_{2,\mu}   \le   6\, 
\sqrt{\a M}S_1(I)\,\exp\{- A^2 / 8\} .  
\end{equation}
  
    Let $ \delta=\min\big\{(\a -1)e^{-A^2/4\a},1)\big\}$
and $
\delta_k =\delta2^{-k}$, $k\ge 1$. We can   extract from   $\mathcal J$  a
subsequence $\mathcal J^*=\{J_k, k\ge 1\}$ depending  on    $f $ and $\a$, such that
$$
\mu\Big\{ \Big| {1\over J_k}\sum_{j\le J_k} T_jf^2 -1\Big| > \delta_k
\Big\} \le \delta_k, \qq \hbox{for all $k\ge 1$.}
$$
 Put   
$$
 B  = \Big\{ \forall
k\ge 1, \ \Big| {1\over J_k}\sum_{j\le J_k} T_jf^2 -1\Big| \le
\delta_k \Big\}.
$$
Plainly, 
 \begin{equation}\label{split}
\E\int \sup_{n\in I}|S_n(F_{A,J })|\
d\mu\le \E\int_{B } \sup_{n\in I}|S_n(F_{A,J })|\ d\mu + \E\int_{B^c}\sup_{n\in I}|S_n(F_{A,J })|\ d\mu.
\end{equation}
  The
first integral in the right-hand side of \eqref{split} can be bounded   for any $R>0$ by
 \begin{equation}\label{splitR}
\int_{B } \E\big(\sup_{n\in I}|S_n(F_{A,J })|\ {\bf 1}_{\{\|F_{A,J}\|_{2,\mu}>R\}}\big)\ d\mu + \int_{B }
\E\big(\sup_{n\in I}|S_n(F_{A,J })|\ {\bf 1}_{\{\|F_{A,J}\|_{2,\mu}\le R\}}\big)\ d\mu .
\end{equation}
Consider the first integral in \eqref{splitR}.
 The fact that $S_n$ is continuous on $L^\infty
(\mu)$ and Chebyshev's inequality allow  to write
 \begin{eqnarray*}\int_{B } \E\Big(\sup_{n\in I}|S_n(F_{A,J })|\ {\bf 1}_{\{\|F_{A,J}\|_{2,\mu}>R\}}\Big)\dd \mu &\le &   \E\Big(\big\|\sup_{n\in
I}|S_n(F_{A,J })| \big\|_{\infty, \m} \cdot {\bf 1}_{\{\|F_{A,J}\|_{2,\mu}>R\}}\Big)\cr &\le &
 AS_2(I)\,  \P\big\{  \|F_{A,J}\|_{2,\mu}>R\big\} \cr 
&\le &AS_2(I)\,   e^{- {R^2}/{4\a} }\E  \exp\Big\{ \frac{1}{4\a}\|F_{A,J }\|_{2,\mu}^2\Big\}.
\end{eqnarray*}
    
 We claim that for any $J\in \mathcal J^*$,
\begin{equation} \label{estexp}
\E  \exp\Big\{ \frac{1}{4\a}\|F_{A,J }\|_{2,\mu}^2\Big\}  \, \le \,
  \sqrt 2 +\a -1.
\end{equation}
 Admit this for a while. We get \begin{eqnarray}\label{firstsplitR}\int_{B } \E\Big(\sup_{n\in I}|S_n(F_{A,J })|\ {\bf 1}_{\{\|F_{A,J}\|_{2,\mu}>R\}}\Big)\dd \mu &\le &  AS_2(I)\,   e^{- {R^2}/{4\a} }  (
\sqrt 2 + \a -1).
\end{eqnarray} 
     Now we prove \eqref{estexp}. Let   $a=\frac{1}{4\a}$. At first by using   Jensen's inequality, 
\begin{eqnarray*}
 \E  \exp\big\{ a\|F_{A,J }\|_{2,\mu}^2\big\}&=&
\E  \exp\Big\{ a\int_X F_{A,J }^2\ d\mu\Big\}
 \le \E \int_X \exp \{ a
F_{A,J }^2 \}\ d\mu 
\cr &\le & \E \int_{B } \exp\big\{ a F_{A,J
}^2\big\}\ d\m + e^{aA^2}\m(B_\a^c).
\end{eqnarray*}

   Next on $B $, we have ${1\over J }\sum_{j\le J } T_jf^2\le 1+\d<\a$, so that 
$$
1-2a \Big( {1\over J }\sum_{j\le J } T_jf^2\Big) >
1-2a\a ={1\over 2} \qq \hbox{for all $J\in \mathcal J^*$.}$$  
As  $\E  e^{ bg^2} ={1\over \sqrt{1-2b}}$ if  $0\le b<{1\over 2}$,  we get
\begin{align*}
\int_{B } \E  \exp\big\{ a F_{A,J
}^2\big\}\ d\m  \le   \int_{B }  \E  \exp\big\{ a F_{
J}^2\big\}\ d\m
 =\int_{B }{\ d\mu \over \sqrt{1-2a \big( {1\over
J}\sum_{j\le J} T_jf^2\big)}} \le \sqrt 2.
\end{align*}
Hence for any $J\in \mathcal J^*$,
$$\E  \exp\big\{ a\|F_{A,J }\|_{2,\mu}^2\big\} \, \le \, \sqrt 2+ e^{aA^2}\m(B ^c)\, \le \,
\sqrt 2 + \delta e^{A^2a}\, \le \,  \sqrt 2 +\a -1.
$$

For the second integral in \eqref{splitR},
we have the  straightforward  bound
\begin{eqnarray}\label{secondsplitR}
\int_{B } \E\big(\sup_{n\in I}|S_n(F_{A,J })|\ {\bf 1}_{\{\|F_{A,J}\|_{2,\mu}\le R\}}\big)\ d\mu &\le &A
\sup_{\|h\|_{\infty,\mu}\le 1\atop \|h\|_{2,\mu}\le R/A}\int_{X }  \sup_{n\in I}|S_n(h)| \ d\mu.
\end{eqnarray}
By substituting   estimates  \eqref{firstsplitR}, \eqref{secondsplitR} into \eqref{splitR}, we can bound the first integral in the right-term of \eqref{split} as follows,
 \begin{equation}\label{splitRcomb}
 \E\int_{B } \sup_{n\in I}|S_n(F_{A,J })|\ d\mu \le AS_2(I)\,   e^{- {R^2}/{4\a} }  (
\sqrt 2 + \a -1)+ A
\sup_{\|h\|_{\infty,\mu}\le 1\atop \|h\|_{2,\mu}\le R/A}\int_{X }  \sup_{n\in I}|S_n(h)| \ d\mu.
\end{equation}
Consider the second integral in the right-term of   \eqref{split}. We use  Cauchy-Schwarz's inequality and the facts that $\m(B^c)\le \d$, $\E \|F_{A,J}\|_{2,\mu}\le \E \|F_{J}\|_{2,\mu}\le 1$, to get
\begin{eqnarray}\label{secondsplit}
  \E\int_{B^c}\sup_{n\in I}|S_n(F_{A,J })|\ d\mu &\le & \sqrt{\mu (B ^c) }\  \E  \big\|\sup_{n\in
I} |S_n(F_{A,J })|\big\|_{2,\mu}   \cr
 &\le & \sqrt \d\,\sqrt{M}\,S_1(I)\, \E \|F_{A,J}\|_{2,\mu}
\ \le\ 
\sqrt{\a -1 }\ e^{-A^2/8\a}\sqrt{M}\,S_1(I).
\end{eqnarray}      
  
By inserting  estimates  \eqref{splitRcomb}, \eqref{secondsplit} into \eqref{split},  we  next arrive to 
  \begin{eqnarray}\label{splitcomb}
\E\int \sup_{n\in I}|S_n(F_{A,J })|\
d\mu&\le &AS_2(I)\,   e^{- {R^2}/{4\a} }  (
\sqrt 2 + \a -1)\cr & &+ A
\sup_{\|h\|_{\infty,\mu}\le 1\atop \|h\|_{2,\mu}\le R/A}\int_{X }  \sup_{n\in I}|S_n(h)| \ d\mu  +\sqrt{\a -1 }\ e^{-A^2/8\a}\sqrt{M}\,S_1(I).
\end{eqnarray}
Now we insert  \eqref{estFA1}, \eqref{splitcomb}
into  \eqref{debut}, and next use estimate \eqref{estFA}. Picking $J$ arbitrarily in $\mathcal J^*$, we get 
  \begin{eqnarray}
\gamma \E  \sup_{n\in I} Z(S_n(f)) &\le& 6\, 
\sqrt{\a M}S_1(I)\,\exp\{- A^2 / 8\}+AS_2(I)\,   e^{- {R^2}/{4\a} }  (
\sqrt 2 + \a -1)\cr &  & + A
\sup_{\|h\|_{\infty,\mu}\le 1\atop \|h\|_{2,\mu}\le R/A}\int_{X }  \sup_{n\in I}|S_n(h)| \ d\mu  +\sqrt{\a -1 }\ e^{-A^2/8\a}\sqrt{M}\,S_1(I)\,.
\end{eqnarray}
  But  
$\a>1$ and $\g$ can be chosen arbitrarily   close to  $1$. We finally obtain, 
\begin{eqnarray}
  \E  \sup_{n\in I} Z(S_n(f)) &\le& 6\, 
\sqrt{  M}S_1(I)\,\exp\{- A^2 / 8\}+ A (
\sqrt 2  )S_2(I)\,   e^{- {R^2}/{4 } }    \cr &  & + A
\sup_{\|h\|_{\infty,\mu}\le 1\atop \|h\|_{2,\mu}\le R/A}\int_{X }  \sup_{n\in I}|S_n(h)| \ d\mu   .
\end{eqnarray}
     This last inequality being satisfied for any $f\in L^\infty (\mu)$ such that $\|f\|_{2,\mu}=1$,
 we easily deduce  the claimed
result   by continuity in  quadratic mean of $Z$.

   \section{Kakutani--Rochlin's lemma}\label{s6}
 We conclude with this   extremely useful tool in ergodic theory. 
  \begin{lemma}\label{kr} If  $T$  is aperiodic, then for every  $\varepsilon > 0$
and for every  $n \ge 1$\index{Kakutani--Rochlin's lemma}  there exists
 $F \in \A$  such that the sets
 $F, T^{-1}(F) \dots T^{-(n-1)}(F)$  are mutually disjoint,
and such that we have, 
$$ {  \mu  \big(F \cup T^{-1}(F) \cup \dots \cup T^{-(n-1)}(F)
\big) > 1 - \varepsilon }.$$
\end{lemma}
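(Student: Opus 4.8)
The plan is to prove the Kakutani--Rochlin lemma by the classical ``tower'' construction, starting from a set of small measure and using aperiodicity to sweep out almost all of $X$.

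First I would fix $\varepsilon>0$ and $n\ge 1$, and choose an auxiliary integer $N$ large, with $N\ge n/\varepsilon$ (the precise threshold will emerge at the end). Since $T$ is aperiodic, I would invoke the standard fact (Rokhlin's original observation, itself deducible from the ergodic decomposition plus the case of a single ergodic component, or proved directly) that for every $N$ there exists $B\in\A$ such that $B, T^{-1}B,\dots,T^{-(N-1)}B$ are pairwise disjoint. Actually, to keep the argument self-contained I would instead argue: pick any $B_0$ with $0<\mu(B_0)<1/N$; aperiodicity guarantees that the set $\{x: T^k x = x \text{ for some } 1\le k\le N\}$ is null, so after removing a null set one may shrink $B_0$ (using a Rokhlin-type exhaustion, or a maximal-disjointness / Zorn argument applied to the family of sets whose first $N$ iterates are disjoint) to obtain $B$ with $B,\dots,T^{-(N-1)}B$ disjoint and $\mu(B)>0$. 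The key point driving the construction is that $\bigcup_{k=0}^{N-1}T^{-k}B$ has measure close to $1$: if it did not, the complement would be a positive-measure invariant-ish remainder into which one could push more of the tower, contradicting maximality; I would make this precise with the usual ``return time'' argument.

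Next, I would build the $n$-tower out of $B$. Define $B^{(j)}=\{x\in B:\ \text{the first return time of }x\text{ to }B\text{ under }T\text{ equals }j\}$ for $j\ge 1$; by Poincaré recurrence (which holds since $\mu$ is $T$-invariant and a probability), $\mu(B\setminus\bigcup_{j\ge 1}B^{(j)})=0$. For each $j$, the sets $B^{(j)},T^{-1}B^{(j)},\dots,T^{-(j-1)}B^{(j)}$ are disjoint and their union (over $j$) reconstitutes a set of full measure — this is the Kakutani skyscraper over $B$. Now set
$$ F=\bigcup_{j\ge 1}\ \bigcup_{0\le i< j,\ i\equiv 0\ (\mathrm{mod}\ n)} T^{-i}B^{(j)},$$
i.e.\ in each column of the skyscraper I select every $n$-th floor starting from the bottom. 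Then $F,T^{-1}F,\dots,T^{-(n-1)}F$ are pairwise disjoint (two chosen floors in the same column differ by a multiple of $n$, and floors in different columns lie in disjoint parts of $X$), and the part of the skyscraper NOT covered by $F\cup\dots\cup T^{-(n-1)}F$ is, within each column of height $j$, only the top $j-n\lfloor j/n\rfloor<n$ floors. Hence the uncovered set has measure at most $n$ times the measure of the ``partial top'' which is controlled by $n\,\mu(B)\le n/N\le \varepsilon$ once $N$ is chosen large; I would write the estimate as $\mu(X\setminus\bigcup_{k=0}^{n-1}T^{-k}F)\le n\,\mu(B)+\mu(\text{null set})<\varepsilon$, giving the claim.

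The main obstacle is the first step: producing a base set $B$ with $N$ disjoint iterates and with union of near-full measure, purely from aperiodicity (no ergodicity, no invertibility assumed beyond what ``$T^{-1}$'' in the statement presupposes). The cleanest route is to reduce to the ergodic case via the ergodic decomposition and handle an ergodic $T$ by the textbook Rokhlin argument, but if one wants to avoid decomposition one must run a transfinite/maximal-element exhaustion carefully and check the measure bound; everything after that (Kakutani skyscraper, Poincaré recurrence, the mod-$n$ floor selection and its bookkeeping) is routine. I would therefore spend the bulk of the write-up on the existence of $B$ and treat the tower surgery as a short computation.
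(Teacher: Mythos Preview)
The paper does not prove Lemma~\ref{kr}; it is merely stated as a classical tool and then applied in the proofs of Theorems~\ref{th1} and~\ref{th2}. So there is nothing in the paper to compare your argument against.

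Your outline is the standard Kakutani skyscraper proof and is essentially correct in spirit, but two points deserve tightening. First, a notational slip: having defined $B^{(j)}$ via the first return time to $B$ \emph{under $T$}, the column over $B^{(j)}$ is $B^{(j)},TB^{(j)},\dots,T^{j-1}B^{(j)}$ (forward iterates), not $T^{-i}B^{(j)}$; either switch to the return time under $T^{-1}$, or keep forward iterates and select the floors so that $F,T^{-1}F,\dots,T^{-(n-1)}F$ are disjoint (equivalently, produce $G$ with $G,TG,\dots,T^{n-1}G$ disjoint and set $F=T^{n-1}G$). Second, and more substantively, your ``first step'' as written is essentially the Rokhlin lemma itself for height $N$: you ask for $B$ with $N$ disjoint iterates \emph{and} tower of near-full measure. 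What you actually need for the skyscraper argument is only (i) $\mu(B)$ small and (ii) the full Kakutani skyscraper over $B$ has full measure, i.e.\ $B$ is a sweep-out set. You correctly flag that obtaining such a $B$ from aperiodicity alone (without ergodicity) is the only real work; the cleanest route is indeed ergodic decomposition, or a direct exhaustion argument showing that the family of sets with $N$ disjoint preimages and maximal measure must sweep out. Once that is done, your mod-$n$ floor selection and the estimate $\mu\big(X\setminus\bigcup_{k=0}^{n-1}T^{-k}F\big)<n\,\mu(B)\le n/N\le\varepsilon$ are routine and correct.
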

 Any set
 $F \in \A$ satisfying the conclusions of Lemma \ref{kr} is  called
an $(\varepsilon,n)$-Kakutani--Rochlin set.
 \vskip 3 pt  We illustrate its usefulness by establishing two divergence criteria for    ergodic
summation methods.    The proof
is based on an argument due to     Deniel (see \cite{D}).  Let  
$ \big\{w_{n,k}, 1\le k
\le n, n\ge 1\big\}$  be a
triangular array of nonnegative  reals, and set  $
  W_{n
}=\sum_{k=1}^{n}w_{n,k}$, $n\ge 1$.  
 Consider an   automorphism $\tau$ from a probability space $(X,\mathcal A, \m)$. Put for $f\in L^0(\m)$,
$$T_n f(x) = {1\over W_n}\sum_{h=1}^{n}w_{n, h}   f(\tau^{h}x) .$$
\begin{theorem} \label{th1} Let $\varphi : \N \to \N$ be   such that $\lim_{n \to \infty}\varphi(n)= \infty$.  Assume that there
exist 
$  
\rho>0$, an infinite sequence
${\mathcal N}$ of integers  such that for any $n\in {\mathcal N}  $ 
\begin{equation} \label{wa}\min_{\varphi(n)\le j\le n-\varphi(n)}\Big({1\over W_{n  -j}}
  \sum_{k=  n-j-\varphi(n)}^{n-j-1}   w_{n-j,  k} \Big)
   \ge
 \rho ,  
\end{equation}
  and further that the series  $\sum_{n\in {\mathcal N}}  \varphi(n)/n  $ converges.
Let $0<\eta<\rho$. Then  there exists  $B\in {\mathcal   A}$
with $0<\m(B)\le \eta$ such that
$ \displaystyle\limsup_{{\mathcal N} \ni n\to \infty}T_n
\, \chi_B \ge \rho$   almost surely.
\end{theorem}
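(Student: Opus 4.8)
The plan is to realize $B$ as a countable union of thin slabs sitting on Kakutani--Rochlin towers whose heights run through (a tail of)~$\mathcal N$, chosen so that the $\tau$-orbit of a typical point of the tower of height~$n$ runs through $B$ exactly along a block of $\varphi(n)$ consecutive times that is a \emph{heavy window} of one of the averaging operators isolated by~\eqref{wa}; a Borel--Cantelli argument will then make the conclusion almost sure. The convergence of $\sum_{n\in\mathcal N}\varphi(n)/n$ enters twice: to keep $\mu(B)\le\eta$, and to make the Borel--Cantelli series converge.

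\emph{Setup.} As $\tau$ is aperiodic, Lemma~\ref{kr} applies. I would fix $\varepsilon_n>0$ with $\sum_n\varepsilon_n<\infty$ and, using $\sum_{n\in\mathcal N}\varphi(n)/n<\infty$, pass to a tail $\mathcal N_0\subseteq\mathcal N$ on which $\sum_{n\in\mathcal N_0}\varphi(n)/n<\eta$ and $1\le\varphi(n)$, $2\varphi(n)<n$. For each $n\in\mathcal N_0$ pick an $(\varepsilon_n,n)$-Kakutani--Rochlin set $F_n$, so the levels $\tau^{-i}F_n$ $(0\le i\le n-1)$ are pairwise disjoint, of common measure $\mu(F_n)\le 1/n$, with union of measure $>1-\varepsilon_n$. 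Set $B_n=\bigcup_{i=1}^{\varphi(n)}\tau^{-i}F_n$ and $B=\bigcup_{n\in\mathcal N_0}B_n$. Then $\mu(B_n)=\varphi(n)\mu(F_n)\le\varphi(n)/n$, so $0<\mu(B)\le\sum_{n\in\mathcal N_0}\varphi(n)/n\le\eta$.

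\emph{Heart of the argument.} Fix $n\in\mathcal N_0$ and $x\in\tau^{-i}F_n$ with $\varphi(n)<i\le n-\varphi(n)$. Since $\tau^ix\in F_n$, the points $\tau x,\dots,\tau^ix$ occupy the distinct levels $\tau^{-(i-1)}F_n,\dots,\tau^{-1}F_n,F_n$, so $\tau^hx\in B_n$ exactly when $i-h\in\{1,\dots,\varphi(n)\}$, i.e.\ $h\in\{i-\varphi(n),\dots,i-1\}\subseteq\{1,\dots,i\}$. As $i\le n-1$, the average $T_i$ involves only those orbit points, hence, by positivity of the weights,
$$T_i\chi_B(x)\ \ge\ T_i\chi_{B_n}(x)\ =\ \frac1{W_i}\sum_{h=i-\varphi(n)}^{i-1}w_{i,h}.$$
Now apply \eqref{wa} with this $n$ and with $j:=n-i\in[\varphi(n),n-\varphi(n)]$; there $n-j=i$ and the window is $\{i-\varphi(n),\dots,i-1\}$, so the last quantity is $\ge\rho$. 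Thus $T_i\chi_B(x)\ge\rho$ for every $x$ in $E_n:=\bigcup_{i=\varphi(n)+1}^{n-\varphi(n)}\tau^{-i}F_n$, the witnessing index being the level~$i$ of~$x$, which exceeds $\varphi(n)$.

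\emph{Conclusion.} Here $\mu(E_n^c)\le\varepsilon_n+2\varphi(n)/n$, so $\sum_{n\in\mathcal N_0}\mu(E_n^c)<\infty$ and, by the Borel--Cantelli lemma, a.e.\ $x$ lies in $E_n$ for all but finitely many $n\in\mathcal N_0$; for each such $n$ the previous step gives an index $m_n(x)>\varphi(n)$ with $T_{m_n(x)}\chi_B(x)\ge\rho$, and $m_n(x)\to\infty$ because $\varphi(n)\to\infty$. Hence $\limsup_{n\to\infty}T_n\chi_B\ge\rho$ almost surely, the divergence being exhibited along a subsequence tending to infinity. I expect the genuinely delicate step to be the matched placement of the slabs $B_n$: they must sit on exactly the levels for which the orbit of a bulk-of-the-tower point traverses $B$ along a window which is simultaneously heavy for some $T_m$ (via~\eqref{wa}) and entirely seen by that~$T_m$, which forces the ``bottom $\varphi(n)$ levels of the height-$n$ tower'' choice and ties the witnessing index to the level; the accompanying bookkeeping, in which the single hypothesis $\sum_{n\in\mathcal N}\varphi(n)/n<\infty$ must control both $\mu(B)$ and the Borel--Cantelli series $\sum_n\mu(E_n^c)$, is the other point to handle carefully, while the aperiodicity of $\tau$ needed for Lemma~\ref{kr} (implicit in the word ``automorphism'') is routine.
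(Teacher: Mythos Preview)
Your proof is correct and takes essentially the same approach as the paper: build Rochlin towers of height $n$ for $n$ in a tail of $\mathcal N$, let $B$ be the union of $\varphi(n)$-thick slabs near the top of each tower, verify via \eqref{wa} that a middle-level point $x$ satisfies $T_m\chi_B(x)\ge\rho$ for some witnessing index $m>\varphi(n)$, and finish by Borel--Cantelli. The only differences are cosmetic---you index the tower by $\tau^{-i}$ rather than $\tau^{u}$ and introduce a separate error sequence $\varepsilon_n$ where the paper simply takes $\varepsilon_n=\varphi(n)/n$---though note that aperiodicity of $\tau$ is an implicit standing hypothesis here, not a consequence of the word ``automorphism''.
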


\begin{remark} \rm Suppose there exists a countable dense class ${\mathcal D}$ of functions from $L^1(\m)$ such that $\{ T_nf , n \in
{\mathcal N}\}$ converges almost everywhere to $\int fd\m$ for any  $f\in {\mathcal D}$. Then if condition (\ref{wa}) is satisfied,
there is no maximal inequality for the sequence $\{ T_n, n\in {\mathcal N}\}$. Indeed, otherwise by the Banach principle, we would have
that $\{ T_nf , n \in {\mathcal N}\}$ converges almost everywhere to $\int fd\m$ for any  $f\in L^1(\m) $. Taking $f=\chi_B$ where $B$ is
in the proposition above provides a contradiction.
\end{remark}
  Now let  
$
 \{w_{ k},   k\ge 1 \}$  be a sequence of non-negative  reals and  consider the ergodic sums
 $$A_n f(x) =  \sum_{h=1}^{n}w_{  h}   f(\tau^{h}x) .$$
 \begin{theorem} \label{th2} Let $\varphi : \N \to \N$ be   such that $\lim_{n \to \infty}\varphi(n)= \infty$.  Assume that there
exist 
$  
\rho>0$, an infinite sequence
${\mathcal N}$ of integers  such that for any $n\in {\mathcal N}  $ 
\begin{equation}   \D_n:= \min_{1\le h\le n-\varphi(n)}\Big( \sum_{k= 
h}^{h+\varphi(n)}w_{ k} 
\Big)     \to \infty,
\end{equation}
as $n\to \infty$ along ${\mathcal N}$,  and further that the series  $\sum_{n\in {\mathcal N}}  \varphi(n)/n  $ converges.
Let $0<\eta<\rho$. Then  there exists  $B\in {\mathcal A}$
with $0<\m(B)\le \eta$ such that
$ \displaystyle\limsup_{{\mathcal N} \ni n\to \infty}A_n
\, (\chi_B) =\infty$   almost surely.
\end{theorem}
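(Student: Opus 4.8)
The plan is to reproduce Deniel's Kakutani--Rochlin tower construction, as for Theorem \ref{th1}, building the ``bad'' set as a countable union $B=\bigcup_{n\in{\mathcal N}}B_n$ in which each $B_n$ is carried by a tower of height $n$ and shaped so that, on one hand, $\sum_{n\in{\mathcal N}}\m(B_n)\le\eta$, and, on the other hand, on a set of $x$ of measure tending to $1$ one has $A_n(\chi_{B_n})(x)\ge\D_n$. First I would fix, for each large $n\in{\mathcal N}$, an $(\e_n,n)$-Kakutani--Rochlin set $F_n$ (Lemma \ref{kr}) with $\e_n\to0$, and enumerate the $n$ disjoint ``levels'' $\tau^{-j}F_n$, $0\le j\le n-1$, as $L^{(n)}_0,\dots,L^{(n)}_{n-1}$ so that $\tau L^{(n)}_i=L^{(n)}_{i+1}$ for $0\le i\le n-2$ (i.e.\ $L^{(n)}_i=\tau^{-(n-1-i)}F_n$). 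Then $\m(L^{(n)}_i)=\m(F_n)\le1/n$, $\m(\bigcup_iL^{(n)}_i)>1-\e_n$, and $x\in L^{(n)}_i$ forces $\tau^h x\in L^{(n)}_{i+h}$ whenever $i+h\le n-1$.

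The combinatorial core is to choose an integer period $m_n>\varphi(n)$ and set
$$I_n=\Bigl(\,\bigcup_{j\ge1}\{\,jm_n,\ jm_n+1,\ \dots,\ jm_n+\varphi(n)\,\}\Bigr)\cap\{1,\dots,n-1\},\qquad B_n=\bigcup_{i\in I_n}L^{(n)}_i .$$
Since $I_n$ consists of fewer than $n/m_n$ pairwise disjoint blocks of length $\varphi(n)+1$, one has $\m(B_n)=|I_n|\,\m(F_n)\le(\varphi(n)+1)/m_n$. Conversely, fix $0\le i\le n-1-m_n-\varphi(n)$, put $j=\lceil(i+1)/m_n\rceil$ and $h_0=jm_n-i$; then $1\le h_0\le m_n$ and the whole block $\{jm_n,\dots,jm_n+\varphi(n)\}$ lies in $\{1,\dots,n-1\}$, hence in $I_n$. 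Thus for $x\in L^{(n)}_i$ and $h_0\le h\le h_0+\varphi(n)$ we get $\tau^h x\in L^{(n)}_{i+h}\subset B_n$, so — all $w_h$ being $\ge0$, and $m_n\le n-\varphi(n)$ for $n$ large so that $1\le h_0\le n-\varphi(n)$ — the defining hypothesis gives
$$A_n(\chi_B)(x)\ \ge\ A_n(\chi_{B_n})(x)\ \ge\ \sum_{k=h_0}^{h_0+\varphi(n)}w_k\ \ge\ \D_n .$$
Hence the set $G_n:=\{x:A_n(\chi_B)(x)\ge\D_n\}$ contains $\bigcup_{0\le i\le n-1-m_n-\varphi(n)}L^{(n)}_i$, whence $\m(G_n)\ge(1-\e_n)\bigl(1-(m_n+\varphi(n))/n\bigr)$.

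It remains to choose the periods $m_n$ so as to have simultaneously $\varphi(n)<m_n$, $m_n=o(n)$ (which then forces $\m(G_n)\to1$), and $\sum_{n\in{\mathcal N}}\m(B_n)\le\eta$ (recall $\m(B_n)\le(\varphi(n)+1)/m_n$), giving $\m(B)\le\sum_n\m(B_n)\le\eta$. This is exactly the point at which the hypothesis $\sum_{n\in{\mathcal N}}\varphi(n)/n<\infty$ enters: by the classical slow-down of a convergent series of nonnegative terms, choose $c_n\uparrow\infty$ with $\sum_{n\in{\mathcal N}}c_n\varphi(n)/n<\infty$ and set $m_n=\lceil Kn/c_n\rceil$; a routine check (using $\varphi(n)\to\infty$, so $\varphi(n)+1\le2\varphi(n)$ for $n$ large) shows that, for $K$ chosen large and $n$ large, $\varphi(n)<m_n$, $m_n/n\to0$, and $\sum_{n\in{\mathcal N}}\m(B_n)\le(2/K)\sum_{n\in{\mathcal N}}c_n\varphi(n)/n\le\eta$, the finitely many remaining $n$ being dealt with by setting $B_n=\emptyset$. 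With $B=\bigcup_{n\in{\mathcal N}}B_n$ we then have $\m(B)\le\eta$ and $\m(B)\ge\m(B_{n_0})=|I_{n_0}|\,\m(F_{n_0})>0$ for any large $n_0$. Finally, $\{x:A_n(\chi_B)(x)<\D_n$ for all large $n\in{\mathcal N}\}$ is contained in $\liminf_nG_n^{\,c}=\bigcup_N\bigcap_{n\ge N}G_n^{\,c}$, and each $\bigcap_{n\ge N}G_n^{\,c}$ is $\m$-null since $\m(\bigcap_{n\ge N}G_n^{\,c})\le\m(G_m^{\,c})$ for every $m\ge N$ while $\m(G_m^{\,c})\to0$; hence that set is $\m$-null. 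Therefore, for $\m$-a.e.\ $x$, $A_n(\chi_B)(x)\ge\D_n$ for infinitely many $n\in{\mathcal N}$, and as $\D_n\to\infty$ along ${\mathcal N}$ this yields $\limsup_{{\mathcal N}\ni n\to\infty}A_n(\chi_B)=\infty$ almost surely.

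The only genuinely delicate point is this calibration of the period $m_n$: it must be large relative to $\varphi(n)$, or else $\sum_n\m(B_n)$ diverges, and small relative to $n$, or else the ``good'' levels fail to fill the tower; the assumption $\sum_{n\in{\mathcal N}}\varphi(n)/n<\infty$ is precisely what makes such a window non-empty, via the slow-down lemma. Two incidental remarks: no Borel--Cantelli step (summability of $\m(G_n^{\,c})$) is needed, since $\m(G_n^{\,c})\to0$ already yields $\m(\liminf_nG_n^{\,c})=0$; and aperiodicity of $\tau$ (implicit in the setting) is used only through Lemma \ref{kr}.
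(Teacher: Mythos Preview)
Your proof is correct, but it follows a genuinely different route from the paper's. The paper simply recycles the construction of Theorem~\ref{th1}: $B_n$ is the union of the top $\varphi(n)$ levels of an $n$-tower, so $\m(B_n)\le\varphi(n)/n$ directly and no slow-down calibration is needed. The price is that for $x$ in level $j$ one only gets $A_{n-j}(\chi_{B_n})(x)\ge\sum_{k=1}^{\varphi(n)}w_{n-j-k}\ge\D_n$, hence on $D_n$ merely $\sup_{m>\varphi(n)}A_m(\chi_{B_n})\ge\D_n$; the large value occurs at some index $m\in(\varphi(n),n-\varphi(n))$, not at $m=n$ itself. Your periodic-block construction is more elaborate (the slow-down choice of $m_n$ is extra work), but it buys you $A_n(\chi_{B_n})\ge\D_n$ at the \emph{specific} index $n\in\mathcal N$, which is actually more faithful to the stated conclusion $\limsup_{\mathcal N\ni n\to\infty}A_n(\chi_B)=\infty$ than the paper's own argument. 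In short: the paper's proof is shorter and avoids the calibration step, while yours yields the sharper statement (divergence along $\mathcal N$ rather than along all of $\N$).
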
 


\begin{proof}[Proof  of Theorem  \ref{th1}]  There is no loss of generality to assume
$\sum_{n\in {\mathcal N}}  \varphi(n)/n \le
\eta$.   By Rochlin's lemma,  for any $\varepsilon>0$, any integer $N $, there exists     $A\in {\mathcal A}$ such that $A,
TA,\cdots,T^{N-1}A$, are pairwise disjoint  and $ {1-\varepsilon}  \le N \mu(A)\leq 1
$. By applying it for $N=n$, $\varepsilon= {\varphi(n)}/{ n}$, we obtain that for each $n\in {\mathcal N} 
$,  there exists
$  A_n  \in {\mathcal A}$ such that
$A_n,
\tau^{}A_n,\ldots,
\tau^{n -1}A_n$ are mutually disjoint and $   \m\big(\sum_{u=0}^{n -1}\tau^{u}A_n \big)=n \m(A_n)\ge 1- {\varphi(n)}/{ n} $. 
Let
$$  B_n= \sum_{n -  \varphi(n) \le u<n } \tau^{ u}A_n, \qq \qq  D_n=\sum_{\varphi(n)\le j<n -  \varphi(n)}
\tau^{j}A_n.$$
  Then we have
\begin{eqnarray*}\m(B_n)&\le &  \varphi(n)  \m(A_n)\le    \frac{\varphi(n)}{ n},
\cr  \m(D_n )&\ge& \frac{n -  2\varphi(n)}{ n}(1-\frac{\varphi(n)}{ n})
\ge (1-2\frac{\varphi(n)}{ n})^2\ge 1-4\frac{\varphi(n)}{ n} .
\end{eqnarray*}
      Now let $0\le \ell<  n -  \varphi(n)$. As $\tau^\ell x\in B_n$ iff  $x\in \tau^{u-\ell}A_n$ for some $n -  \varphi(n)\le u<n $,  
we can
write 
    $$\chi_{B_n}(\tau^\ell x)= \sum_{n -  \varphi(n)\le u<n }\chi_{\{\tau^{u-\ell} A_n\}}(x)=\sum_{n -  \varphi(n)-\ell \le
 v<n -\ell }\chi_{\{\tau^{v}
  A_n\}}(x).
  $$
Let $\ell = n  -\varphi(n) -\l$ with $1\le \l <n -\varphi(n)$. 
We have  
 $$\chi_{B_n}(\tau^{n -\varphi(n) -\l} x)=  \sum_{\l \le v<\l +\varphi(n) }\chi_{\{\tau^{v} A_n\}}(x). $$
 As $\varphi(n)/n\to 0$ when  $  n\to \infty$ along ${\mathcal N}$, we have $2\varphi(n)\le n   $ once $n$ is large. Fix some
$\varphi(n)\le j<n -
\varphi(n) $ and  pick   $x\in 
\tau^{j}A_n$. If we   choose  $\l$  so that
$ 
\l\le j<
\l+\varphi(n) 
$,   by letting $v=j$ in  the equation above  we see that 
$\tau^{n  -\varphi(n) -\l} x\in B_n$. 
\smallskip\par Thus  $x\in 
\tau^{j}A_n$  and $\l\in\big\{ j-\varphi(n)+1, j-\varphi(n)+2, \ldots, j\big\}$ imply
$$  \tau^{n  -\varphi(n)
-\l} x\in B_n.$$
 Consequently, if $x\in 
\tau^{j}A_n$ 
 \begin{eqnarray}\label{basic}T_{n  -j} \chi_{{B_n}}(x)&=&  \sum_{k=1}^{n -j}w_{n-j,n-j-k}    \chi_{B_n}(\tau^{n-j-k}x)
 \ \ge \  \sum_{k=  1}^{\varphi(n)} w_{n-j,n-j-k}\chi
_{B_n}(\tau^{n  -j-k} x)\cr  
(k=\varphi(n)+\l-j)\qquad&= & \sum_{\l= j-\varphi(n)+1}^j w_{n-j,n -\varphi(n)-\l }\chi
_{B_n}(\tau^{n  -\varphi(n) -\l} x)
\cr &=& \sum_{\l= j-\varphi(n)+1}^j w_{n-j,n-\varphi(n)-\l }
\  =\  \sum_{k= 
1}^{\varphi(n)} w_{n-j,n-j-k} .
\end{eqnarray}
By the assumption made,
 \begin{eqnarray}\label{basicT} {1\over W_{n  -j}}\sum_{k= 
1}^{\varphi(n)} w_{n-j,n-j-k} &\ge & \min_{\varphi(n)\le j\le n-\varphi(n)}\Big({1\over W_{n  -j}}\sum_{k= 
1}^{\varphi(n)}w_{n-j,n-j-k} 
\Big)   
 \ \ge \  \rho  .
\end{eqnarray} 
  Note that $n -j>\varphi(n)$. Thus on $D_n $, 
 $$\sup_{m>\varphi(n)}\, T_{m} (\chi_{B_n} )   \ge  \rho. $$    
  Set 
$$E= \bigcup_{n\in {\mathcal N}}  B_n ,\qquad 
 F_N=\bigcap_{n\in {\mathcal N}\atop n\ge N}  D_n .$$  
 We observe that   $\m(F_N)\ge 1- 4\sum_{n\in {\mathcal N}\atop n\ge N}  \varphi(n)/n \to 1 $ as $N\to \infty$. Thus on $F_N$, 
\begin{equation}
\limsup_{{\mathcal N} \ni n\to\infty}\ T_n(\chi_E) \ge \rho.
\end{equation} Further $\m(E)\le \sum_{n\in {\mathcal N}}  \varphi(n)/n< \eta$. 
This establishes Theorem \ref{th1}.  
\end{proof}
\begin{proof}[Proof  of Theorem  \ref{th2}] We start   with (\ref{basic}) which here becomes
\begin{eqnarray*}\label{basicA}A_{n  -j} \chi_{{B_n}}(x) & \ge&    \sum_{k= 
1}^{\varphi(n)} w_{n-j-k} ,
\end{eqnarray*}
 and next modify the previous proof as follows: \begin{eqnarray*}\label{basicT}  \sum_{k= 
1}^{\varphi(n)} w_{n-j-k} &\ge & \min_{\varphi(n)\le j\le n-\varphi(n)}\Big( \sum_{k= 
1}^{\varphi(n)}w_{n-j-k} 
\Big)   
 \ \ge \   \min_{1\le h\le n-\varphi(n)}\Big( \sum_{k= 
h}^{h+\varphi(n)}w_{ k} 
\Big)   = \D_n.
\end{eqnarray*}  
  Thus 
 $\sup_{m>\varphi(n)}A_{m} (\chi_{B_n})    \ge  \D_n $ on $D_n$.
   Therefore on $F_N$, 
 $$\limsup_{{\mathcal N} \ni n\to\infty} T_n\chi_E =\infty .$$
  Further $\m(E)\le \sum_{n\in {\mathcal N}}  \varphi(n)/n< \eta$.  
\end{proof}


\section{Appendix: GB and GC sets in ergodic theory}

  Let   $S_n\colon L^2(\mu)\to L^2(\mu)$, $n\ge 1$ be continuous operators satisfying assumption  {\rm (C)}. Assume that
property
$(\mathcal{ B}_p)$ is satisfied for some $2\le p<\infty$. By  the first
entropy criterion (Theorem \ref{t3})      the sets $C_f$ are GB  sets,  for any $f\in L^2(\mu)$.  
\vskip 2 pt 
Consider the following problem.  Let 
$A\subset L^2(\mu)$ and set
$$
C(A) = \{ S_n(f),\, n\ge 1, f\in A \}.
$$
Assume that  $A$  is a  GB  set. Can we say that   $C(A)$ is again a
 GB  set?
  In    \cite{We5},  
we  showed that so is the case  if $S_n(f)$ are  positive operators. Apart from this restriction, this  result can be viewed as a natural extension of the
first entropy criterion, since it is stated under the same assumptions and contains it obviously.

\begin{theorem} \label{c(a)}
    Let $S_n\colon L^2(\mu)\to L^2(\mu)$, $n\ge 1$, $S_1= \textit{Identity}$,  be  continuous operators  satisfying assumption  {\rm (C)}.
  Assume
that property $(\mathcal{ B}_p)$ is satisfied for some $2\le
p<\infty$. Let $A\subset L^2(\mu)$. Then we have the equivalence, 
 $$ \hbox{  A is a GB set}  \  \Longleftrightarrow \  \hbox{  $C(A)$ is a GB set.} $$
Further, there exists a constant $C$ such that $$\E \,   \sup_{h\in C(A)}Z(h)
\le C\bigl\{\inf_{h\in A}\|h\|_{2,\mu}+\E \, \sup_{h\in
A}Z(h)\bigr\}. $$
\end{theorem}

\begin{remarks} 1.  Since $S_1$ is the identity
operator,   $C(A)$ is a GB set only if $A$  is.

\noi 2.
 Let  $\tau$ be some ergodic endomorphism of  $(X,\A,\mu)$.  
By applying the above theorem with the choices $p=2$,
$$
  S_n(f)= {1\over
n}\sum_{k=0}^{n-1}f\circ \tau^k, \qq T_jf= f\circ \tau^j
$$  $T_j= T^j$ where $T$ is defined by $Tf=f\circ
\tau$, and by using Birkhoff's theorem, we deduce that $C(A)$ is a GB
set if $A$ is a GB set. 

Note that here the sets $C_f$ are all GC sets. This follows from Talagrand's estimate 
(Remark \ref{r1}) and Dudley's metric entropy theorem
\cite{Du2}. 
\end{remarks}
We give a significantly simpler proof than in \cite{We5}.
\begin{proof}[Proof of Theorem \ref{c(a)}]  By  the Banach principle, there exists a constant
 $0<K<\infty$ such that  for any  $h\in L^p(\mu)$,
$$
\mu\big\{ \sup_{n\ge 1}|S_n(h)|\ge K\|h\|_{p,\mu}\big\} \le  {1\over
4}.
$$
Let $A_0\subset L^p(\mu)$ be  a finite  set. In view of the positivity  
of the operators $S_n$,
\begin{eqnarray*}
 \mu\Big\{  \sup_{h\in A_0}\sup_{n\ge 1}|S_n(F_{J,h})|\ge K\big\|\sup_{h\in
A_0}|F_{J,h}|\,\big\|_{p,\mu}\Big\}
 & \le &\mu\Big\{
\sup_{n\ge 1}S_n(\sup_{h\in A_0}|F_{J,h}|)\ge K\big\|\ \sup_{h\in
A_0}|F_{J,h}|\, \big\|_{p,\mu}\Big\} \cr &\le & {1\over 4}.
\end{eqnarray*}
Hence by integrating   with respect to    $\P$  and  by 
Fubini's theorem,    $$ \int_X  \P\Big\{  \sup_{h\in A_0}\sup_{n\ge 1}|S_n(F_{J,h})|\ge
K\|  \sup_{h\in A_0}|F_{J,h}|\, \|_{p,\mu}\Big\}  d\mu \le  {1\over
4}.
$$
Letting $D\subset X$ defined  by
$$
D =\Big\{   \P\big\{  \sup_{h\in A_0}\sup_{n\ge 1}|S_n(F_{J,h})|\le K\|\sup_{h\in
A_0}|F_{J,h}|\,\|_{p,\mu}\big\} \ge {1\over 2}  \Big\},
$$
it follows that \begin{eqnarray*}
  {1\over 4}   &\ge& \int_{D^c } \P\big\{  \sup_{h\in A_0}\sup_{n\ge
1}|S_n(F_{J,h})|\ge K\|\sup_{h\in A_0}|F_{J,h} | \,\|_{p,\mu}\big\}  \
d\mu
\ \ge \  {1\over 2}\mu(D^c)  
  .
\end{eqnarray*}
Thus $\m(D)\ge 1/2$. 
 As  $$ \P\Big\{ \big\| \sup_{h\in A_0}|F_{J,h}|
\big\|_{p,\mu}\ge 4 \E  \, \big\|  \sup_{h\in A_0}|F_{J,h}|
\big\|_{p,\mu}\Big\} \le   {1\over 4},$$
 we get on $D$
 $$       \P\Big\{  \sup_{h\in A_0} \sup_{n\ge
1}|S_n(F_{J,h})|\le 4K\E\, \big\|  \sup_{h\in A_0}|F_{J,h}|\,\big\|_{p,\mu}\Big\} \ge {1\over 4}   .   $$
 This along with \eqref{fer}  implies that on $D$
  \begin{eqnarray}\label{eq72}
     \E \sup_{h\in A_0}\sup_{n\ge 1}|S_n(F_{J,h})|\le  64K\,\E
\, \big\|  \sup_{h\in A_0}|F_{J,h}|\, \big\|_{p,\mu}   . \end{eqnarray} 
 \vskip 2 pt
   Let $N $ be some positive integer and $\e>0$.    Let $A\subset L^2(\mu)$ be a finite  set   such      that
 \begin{eqnarray}\label{eq73}
\|S_k(f)-S_\ell(g)\|_{2,\mu}\not= 0,
 \qq \qq \|S_k(f)\|_{2,\mu}\not= 0,  
\end{eqnarray}
for any $f,g\in A$ and   $k,\ell$ in $[1,N]$ with  $k\not= \ell$.
  \vskip 2 pt
    Choose now $A_0=\big\{ f^\e,f\in A\big\}\subset L^\infty(\m)$ be such that
  $
\sup_{f\in A}\|f-f^\e\|_{2,\mu}\le \e$.  
 By  continuity,   if $\e$ is small enough  we also have that       \begin{eqnarray}\label{eq73}
\t:=\min\Big\{ \|S_k(f)-S_\ell(g)\|_{2,\mu} ,
 \   \|S_k(f)\|_{2,\mu}, \ \  1\le k\not= \ell\le N,  f,g\in A\Big\}>0  .
 \end{eqnarray}
    From the commutation assumption also follows that
$$
\big\|S_k(F_{J,f^\e})-S_\ell(F_{J,g^\e})\big\|_{2,\P}^2
 = {1\over J}\sum_{j\le J}\big(
T_j[S_k(f^\e)-S_\ell(g^\e)]\big)^2= {1\over
J}\sum_{j\le J}T_j\Big(S_k(f^\e)-S_\ell(g^\e)\Big)^2.
$$
 Let $q^* =\inf\big\{ q\ge 1 : 2^{-q}\le
{\t^2/ 4}\big\}$. By a routine extraction argument, there exists partial
index  $\mathcal{ J}=\{ J_q,q\ge q^* \}
 $ such that
 if $$
 A_q=\Big \{ \sup_{f,g\in A,\ 1\le k,\ell\le N } \Big |
\big\|S_k(F_{J_q,f^\e})-S_\ell(F_{J_q,g^\e})\big\|_{2,\P}^2-
\big\|S_k(f^\e)-S_\ell(g^\e)\big\|_{2,\mu}^2\, \Big |\ge 2^{-q}\Big \}
, 
$$
then  
 $    \mu(A_q) \le  {\e\over 2^q}$,  for all for any $q\ge q^*$.
 Thus 
$$H:=\bigcap_{q\ge q^*} A^c_q \ge 1-\sum_{q\ge q^*}  \mu(A_q)\ge 1-\e.$$
And on $H$,
\begin{eqnarray}  \label{eq75}  {1\over
2}\big\|S_k(f^\e)-S_\ell(g^\e)\big\|_{2,\mu}\le
\big\|S_k(F_{J,f^\e})-S_\ell(F_{J,g^\e})\big\|_{2,\P}\le
2\big\|S_k(f^\e)-S_\ell(g^\e)\big\|_{2,\mu},
 \end{eqnarray}
for any   $J\in \mathcal{ J}^*$ and any $f,g\in
A,\ 1\le k,\ell\le N$. 
  We note that  $\mathcal{ J}$ depends on $\e$, $N$
and $A$. 
 \vskip 2 pt 
By   applying Slepian's  inequality on $H\cap D$  and using \eqref{eq72} we get,  \begin{eqnarray}\label{eq76}
   \E \, \sup_{h\in  \{ S_n(f^\e), 1\le
n\le N,f\in A \} }Z(h) &   \le  & 2\E \,\sup_{f\in
A_0}\sup_{1\le n\le N}S_n(F_{J,f^\e})
\cr &\le & \ 128K\E \,\big\|\sup_{h\in A_0}|F_{J,h}|\big\|_{p,\mu},\end{eqnarray}
 for any $J\in \mathcal{ J}^*$.
   \vskip 4 pt

 We now estimate   $\E \,  \|   \sup_{f^\e\in
A_0}|F_{J,f^\e}|   \|_{p,\mu}$.  By using Jensen's inequality and integrability properties of Gaussian vectors, we get
 $$
 \E \big\| \sup_{f^\e\in
A_0}|F_{J,f^\e}|\, \big\|_{p,\mu}\le \Big( \E  \int_X
\sup_{f^\e\in A_0}|F_{J,f^\e}|^p\ d\mu \Big)^{ {1/ p}} \le C_p\Big(
\int_X \big( \E  \sup_{f^\e\in A_0}|F_{J,f^\e}|\big)^p  d\mu\Big)^{ {1/
p}}. 
$$
   The triangle inequality and
the symmetry properties of Gaussian laws further imply
\begin{align*}
\E \,
\sup_{f^\e\in A_0}|F_{J,f^\e}| & \le \E |F_{J,f_0^\e}|+\E \,
\sup_{f^\e,g^\e\in A_0}|F_{J,g^\e-f^\e}|  =\E |F_{J,f_0^\e}|+2\E \,
\sup_{f^\e\in A_0}F_{J,f^\e},
\end{align*}
where $f^\e_0\in A_0$ is arbitrary. Integrating then this inequality
over $H$ with respect to $\mu$, then applying the Slepian comparison
lemma, imply
\begin{align*}\int_H \big(\E \,  \sup_{f^\e\in
A_0}|F_{J,f^\e}|\big)^p\ d\mu &
 \le \int_H \big( \E   |F_{J,f_0^\e}|+ 2\E \, \sup_{f^\e\in A_0}F_{J,f^\e}\big)^p \ d\mu\cr
 {} & \le \int_H \big( \E \, |F_{J,f_0^\e}|+ 4\E \, \sup_{f^\e\in
A_0}Z(f^\e)\big)^p \ d\mu.
\end{align*}
 Hence,
$$
\Big(\int_H \big(\E    \sup_{f^\e\in
A_0}|F_{J,f^\e}|\big)^p\ d\mu\Big)^{ {1/ p}} \le \big\|{\bf 1}_H\E
|F_{J,f_0^\e}|\big\|_{p,\mu}+ 4\E \, \sup_{f^\e\in A_0}Z(f^\e).  
$$
 But
 $$
 \big\|{\bf 1}_{H }\E \,|F_{J,f_0^\e}|\,\big\|_{p,\mu}^p=\int_H \Big( {2\over \pi}
{1\over J}\sum_{j\in J}T_j(f_0^\e)^2\Big)^{ {p/2}}\ d\mu,
$$
and
$$
\Big( {1\over J}\sum_{j\in J}T_j(f_0^\e)^2\Big)^{ {p/ 2}}
\to \Big(\int_X (f_0^\e)^2\ d\mu \Big)^{ {p/ 2}},
$$
as
$J$ tends to infinity along   $\mathcal{ J}$, uniformly in $x\in H$.
  This shows that
\begin{eqnarray}\label{eq78}
\limsup_{J\to
\infty \atop J\in \mathcal{ J}} \Big(\int_H \big(\E \,  \sup_{f^\e\in
A_0}|F_{J,f^\e}|\big)^p\ d\mu\Big)^{ {1/ p}} \le  ( {2/ \pi})^{ {1/
2}} \big(\|f^\e_0\|_{2,\mu}+\e \big)+4\E  \sup_{f^\e\in A_0}Z(f^\e).
 \end{eqnarray}
Now  by
Jensen's inequality,
\begin{align*}
 \Big(\int_{H^c} \big(\E \,  \sup_{f^\e\in
A_0}|F_{J,f^\e}|\big)^p\ d\mu\Big)^{{1/ p}}
  \le
B\sqrt {\log\,
  \sharp(A_0) } \Big(\int_{H^c}  \sup_{f^\e\in A_0}\Big( {1\over
J}\sum_{j\in J}T_j(f^\e)^2\Big)^{ {p/ 2}}\Big)^{ {1/ p}}.
\end{align*}
But
$$
\sup_{f^\e\in A_0} {1\over J}\sum_{j\in J}T_j(f^\e)^2\to \sup_{f^\e\in
A_0}\int_X  (f^\e)^2\ d\mu ,
$$
 $\mu$-almost surely as $J$ tends to infinity along $\mathcal{ J}$. As moreover $$
 \sup_{h\in
A_0} {1\over J}\sum_{j\in J}T_j(f^\e)^2\le  \sup_{f^\e\in
A_0}\|f^\e\|_{\infty,\mu}^2 ,
$$
 by applying   the dominated convergence 
theorem  we get, 
 \begin{equation}\label{eq79}
\begin{split}
 \limsup_{J\to \infty \atop J\in \mathcal{ J}}
\Big(\int_{H^c} \big(\E \,  \sup_{f^\e\in A_0}|F_{J,f^\e}|\big)^p\
d\mu\Big)^{ {1/ p}}
 &\!  \!\le
 B(\mu(H^c))^{ {1/ p}}\sqrt {\log \sharp(A_0) }\sup_{f^\e\in
A_0}\|f^\e\|_{2,\mu}
\\
  &  \! \!  \le B(2\e)^{ {1/ p}}\sqrt {\log\,
 \sharp(A) }\big(\e+\sup_{f^\e\in A_0}\|f^\e\|_{2,\mu}\big) .
\end{split}
\end{equation} 
By combining now  estimates \eqref{eq76}, \eqref{eq78} and
\eqref{eq79}, we arrive to 
 \begin{equation}\label{eq710}
\begin{split}
  \E \,    \sup_{h\in  \{ S_n(f^\e),  f^\e\in
A_0,1\le n\le N  \} }Z(h) \le 32K\Big(\|f^\e_0\|_{2,\mu}+\e  +&4\E  \sup_{h\in
A_0}Z(h )
\\
  {}  +
B\sqrt {\log \sharp(A) }\big(\e+\sup_{f^\e\in A_0}&\|f^\e\|_{2,\mu}\big)  (2\e)^{ {1/ p} }\Big). 
\end{split}
\end{equation}
 But $\e$
is arbitrary as well as  $f_0^\e$ in  $A_0$. 
  By letting $\e$ tend to $0$ and using  $L^2$-continuity    of Gaussian vectors, we therefore conclude that 
   \begin{equation}\label{eq712}
 \E \, \Big\{  \sup_{h\in
 \{ S_n(f), 1\le n\le N,f\in A \} }Z(h)\Big\} \ \le \ 128K\, \Bigl(\inf_{h\in
A}\|h\|_{2,\mu}+4\E \sup_{h\in A}Z(h)\Bigr). 
\end{equation}
 The proof
is   achieved by letting   $A$ increase to some countable
$L^2(\mu)$-dense subset of  $A$. \end{proof}


 {\baselineskip 9pt }

  \end{document}